\theoremstyle{plain}
  \newtheorem{thm}{Theorem}[subsection]
  \newtheorem{lem}[thm]{Lemma}
  \newtheorem{prop}[thm]{Proposition}
  \newtheorem{cor}[thm]{Corollary}
\theoremstyle{definition}
  \newtheorem{defn}[thm]{Definition}
  \newtheorem{exmp}[thm]{Example}
  \newtheorem{rem}[thm]{Remark}
  \newtheorem{ques}[thm]{Question}
\DeclareMathAlphabet{\mathcal}{OMS}{cmsy}{m}{n}
\DeclareMathOperator{\bd}{bd}
\DeclareMathOperator{\epi}{epi}
\def\ps@pprintTitle{%
 \let\@oddhead\@empty
 \let\@evenhead\@empty
 \def\@oddfoot{\centerline{\thepage}}%
 \let\@evenfoot\@oddfoot}
\newcommand{\ra}{\rightarrow}
\newcommand{\bv}{\bigvee}
\newcommand{\bw}{\bigwedge}
\newcommand{\dbv}{\displaystyle\bv}
\newcommand{\dprod}{\displaystyle\prod}
\renewcommand{\phi}{\varphi}
\newcommand{\al}{\alpha}
\newcommand{\be}{\beta}
\newcommand{\Up}{\Upsilon}
\newcommand{\ep}{\epsilon}
\newcommand{\Lam}{\Lambda}
\newcommand{\lam}{\lambda}
\newcommand{\CB}{\mathcal{B}}
\newcommand{\CC}{\mathcal{C}}
\newcommand{\CF}{\mathcal{F}}
\newcommand{\sk}{{\sf k}}
\newcommand{\bbR}{\mathbb{R}}
\newcommand{\Fk}{\CF^n_{\sf k}}
\newcommand{\Fs}{\CF^n_{\sf s}}
\newcommand{\Ci}{\CC^n_{\sf i}}
\newcommand{\ku}{\sk(u)}
\newcommand{\cu}{{\sf c}(u)}
\newcommand{\su}{{\sf s}(u)}
\renewcommand{\th}{\tilde{h}}
\newcommand{\dsum}{\displaystyle\sum}
\numberwithin{equation}{section}
\begin{document}

\begin{frontmatter}



\title{Fuzzy vectors via convex bodies}


\author[D]{Cheng-Yong Du}
\ead{cyd9966@hotmail.com}

\author[S]{Lili Shen\corref{cor}}
\ead{shenlili@scu.edu.cn}

\cortext[cor]{Corresponding author.}
\address[D]{School of Mathematical Sciences and V.\thinspace C. {\rm\&} V.\thinspace R. Key Lab, Sichuan Normal University, Chengdu 610068, China}
\address[S]{School of Mathematics, Sichuan University, Chengdu 610064, China}

\begin{abstract}
In the most accessible terms this paper presents a convex-geometric approach to the study of fuzzy vectors. Motivated by several key results from the theory of convex bodies, we establish a representation theorem of fuzzy vectors through support functions, in which a necessary and sufficient condition for a function to be the support function of a fuzzy vector is provided. As applications, symmetric and skew fuzzy vectors are postulated, based on which a Mare{\v s} core of each fuzzy vector is constructed through convex bodies and support functions, and it is shown that every fuzzy vector over the $n$-dimensional Euclidean space has a unique Mare{\v s} core if, and only if, the dimension $n=1$.
\end{abstract}

\begin{keyword}
fuzzy vector \sep convex body \sep support function \sep symmetric fuzzy vector \sep skew fuzzy vector \sep Mare{\v s} core \sep Mare{\v s} equivalence


\MSC[2020] 26E50 \sep 03E72 \sep 52A20
\end{keyword}

\end{frontmatter}

\section{Introduction}

Since Zadeh introduced the concept of fuzzy sets \cite{Zadeh1965} in the 1960s, \emph{fuzzy numbers}, as a special kind of fuzzy subsets of the set $\bbR$ of real numbers, have received considerable attention both in the theory and the applications of fuzzy sets \cite{Dubois1978,Dubois1980,Dubois1982,Dubois1982a,Dubois1982b,Dubois1993,Dijkman1983,Diamond1994}. The notion of fuzzy number may be generalized to \emph{fuzzy vector} (also \emph{$n$-dimensional fuzzy number}) without obstruction, simply by replacing $\bbR$ with the $n$-dimensional Euclidean space $\bbR^n$ in its definition, which has been widely studied as well \cite{Kaleva1985,Puri1985,Diamond1989,Zhang2001,Zhang2002b,Wang2002,Wang2007,Maeda2008}.

\emph{Convex geometry}, as an independent branch of mathematics, has a much longer history that dates back to the turn of the 20th century \cite{Bonnesen1934}, in which several contributions can be even traced back to the ancient works of Euclid and Archimedes. As a well-developed theory in the past decades, convex geometry has been applied to different areas of geometry, analysis and computer science \cite{Gruber1993,Koldobsky2008}.

It is well known that fuzzy vectors can be characterized through their \emph{level sets} (see Theorem \ref{vec-cb}, originated from \cite{Negoita1975,Kaleva1987}). In particular, each level set of a fuzzy vector is a nonempty, compact and convex subset of $\bbR^n$, which is precisely a \emph{convex body} \cite{Gruber2007,Schneider2013} in the sense of convex geometry. It is then natural to consider the possibility of exploiting the powerful arsenal of convex geometers in the realm of fuzzy vectors, and it is the motivation of this paper. Being tailored to the readership of fuzzy set theorists, the geometric machinery involved in this paper are presented in the most accessible terms, so that hopefully, even a reader who is not familiar with the extensive apparatus of convex geometry could easily follow up.

Specifically, inspired by several key results from the theory of convex bodies, this paper is intended to represent fuzzy vectors through support functions (Section \ref{Representation}) and, as applications, investigate Mare{\v s} cores of fuzzy vectors (Section \ref{Mares}). The backgrounds and our main results are illustrated as follows.

\subsection{Representation of fuzzy vectors via support functions} 

Support functions play an essential role in the study of fuzzy vectors. Explicitly, the \emph{support function} \cite{Puri1985,Diamond1989} of a fuzzy vector $u$ is given by
$$h_u:[0,1]\times S^{n-1}\to\bbR,\quad h_u(\al,x):=\bv_{t\in u_{\al}}\langle t,x\rangle,$$
where $S^{n-1}$ is the unit sphere in $\bbR^n$, $\langle -,-\rangle$ refers to the standard Euclidean inner product of $\bbR^n$, and $u_{\al}$ is the $\al$-level set of $u$. The following question arises naturally:

\begin{ques} \label{Question}
Can we find a necessary and sufficient condition for a function 
$$h:[0,1]\times S^{n-1}\to\bbR$$
to be the support function of a (unique) fuzzy vector?
\end{ques}

This question is partially answered by Zhang--Wu in \cite{Zhang2002b}, where several sufficient conditions are provided, though neither of them is necessary. In order to fully solve this question, several key results from the theory of convex bodies are exhibited in Subsection \ref{CB-Supp}:
\begin{itemize}
\item There exists a hyperplane that supports a convex body at any of its boundary point (Theorem \ref{convex-body-support-hyperplane}).
\item A function $h:S^{n-1}\to\bbR$ is the support function of a (unique) convex body if, and only if, it is \emph{sublinear} (Theorem \ref{cb-supp}).
\end{itemize}

Without assuming any a-priori background by the reader on convex geometry, in Subsection \ref{CB-Supp} we develop all needed ingredients from scratch for the self-containment of this paper. Then, based on the well-known characterization of fuzzy vectors through convex bodies (Theorem \ref{vec-cb}), a representation theorem of fuzzy vectors via support functions is established (Theorem \ref{vec-supp}). Explicitly, it is shown in Theorem \ref{vec-supp} that a function
$$h:[0,1]\times S^{n-1}\to\bbR$$
is the support function of a (unique) fuzzy vector if, and only if, 
\begin{enumerate}[label={\rm(VS\arabic*)}]
\item $h(\al,-):S^{n-1}\to\bbR$ is sublinear for each $\al\in[0,1]$,
\item $h(-,x):[0,1]\to\bbR$ is non-increasing, left-continuous on $(0,1]$ and right-continuous at $0$ for each $x\in S^{n-1}$.
\end{enumerate}
Therefore, a perfect answer is provided for Question \ref{Question}.

\subsection{Mare{\v s} cores of fuzzy vectors}

In the recent works of Qiu--Lu--Zhang--Lan \cite{Qiu2014} and Chai--Zhang \cite{Chai2016}, a crucial property of fuzzy numbers regarding their \emph{Mare{\v s} cores} is revealed. Explicitly, a fuzzy number $u$ is \emph{skew} \cite{Chai2016} if it cannot be written as the sum of a fuzzy number and a non-trivial \emph{symmetric} fuzzy number in the sense of Mare{\v s} \cite{Marevs1989}; that is, if
$$u=v\oplus w$$
and $w$ is symmetric, then $w$ is constant at $0$. A fuzzy number $v$ is the \emph{Mare{\v s} core} \cite{Marevs1992,Hong2003} of a fuzzy number $u$ if $v$ is skew and $u=v\oplus w$ for some symmetric fuzzy number $w$. The following theorem combines the main results of \cite{Qiu2014} and \cite{Chai2016}:

\begin{thm} \label{fuzzy-number-main}
Every fuzzy number has a unique Mare{\v s} core, so that every fuzzy number can be decomposed in a unique way as the sum of a skew fuzzy number, given by its Mare{\v s} core, and a symmetric fuzzy number.
\end{thm}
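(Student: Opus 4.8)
The plan is to specialize the support-function machinery to $n=1$, where everything reduces to a statement about two monotone real functions of the level parameter. Since $S^0=\{-1,1\}$, Theorem \ref{vec-supp} encodes a fuzzy number $u$ by its endpoint functions $u^+_\al:=h_u(\al,1)$ and $u^-_\al:=-h_u(\al,-1)$, so that $u_\al=[u^-_\al,u^+_\al]$, where $u^+$ is non-increasing and $u^-$ non-decreasing, both left-continuous on $(0,1]$ and right-continuous at $0$ (sublinearity (VS1) here is just $u^-_\al\le u^+_\al$). Writing $m_\al:=\tfrac12(u^+_\al+u^-_\al)$ for the midpoint and $r_\al:=\tfrac12(u^+_\al-u^-_\al)\ge0$ for the radius, Minkowski addition gives $h_{u\oplus w}=h_u+h_w$, so $\oplus$ adds endpoints; a fuzzy number $w$ is symmetric ($w=-w$) exactly when its support function is even in $x$, i.e. $w^-=-w^+$, equivalently its midpoint vanishes and it is carried by a single non-increasing radius $s_\al\ge0$. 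Consequently $m$ is invariant under adding a symmetric summand, so every Mare{\v s} core of $u$ shares the midpoint $m$ of $u$ and only its radius can differ.

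The heart of the argument is to single out, among all fuzzy numbers with the fixed midpoint $m$, the minimal admissible radius, which I would show is the total variation $V_\al:=\mathrm{Var}_{[\al,1]}(m)$ of $m$ over $[\al,1]$. The lower bound is a telescoping estimate: for any fuzzy number with midpoint $m$ and radius $r$ and any $\al\le\be$, the requirements that $u^+$ be non-increasing and $u^-$ non-decreasing give $r_\al-r_\be\ge|m_\al-m_\be|$, and summing over a partition of $[\al,\be]$ yields $r_\al-r_\be\ge\mathrm{Var}_{[\al,\be]}(m)$; with $\be=1$ and $r_1\ge0$ this gives $r_\al\ge V_\al$. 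Conversely $V$ is itself admissible: since $V_\al-V_\be=\mathrm{Var}_{[\al,\be]}(m)\ge|m_\al-m_\be|$, the functions $m+V$ and $m-V$ are respectively non-increasing and non-decreasing, and $V$ inherits the one-sided continuity of $m$, so $v:=(m,V)$, i.e. $v_\al=[m_\al-V_\al,\,m_\al+V_\al]$, is a genuine fuzzy number. Moreover $s^*:=r-V$ is non-increasing (again by $r_\al-r_\be\ge V_\al-V_\be$), non-negative (by the lower bound applied to $u$), and suitably continuous, hence the radius of a symmetric fuzzy number $w^*$ with $u=v\oplus w^*$. This proves existence of a core.

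Skewness of $v=(m,V)$ is then immediate: if $v=v'\oplus w'$ with $w'$ symmetric of radius $s'$, then $v'$ has midpoint $m$ and radius $V-s'$, so the lower bound forces $V-s'\ge V$, whence $s'\equiv0$. The same lemma yields uniqueness. Any skew core $\tilde v$ of $u$ has midpoint $m$ and radius $\tilde r\ge V$; were $\tilde r\ne V$, then $t:=\tilde r-V\ge0$ would be non-increasing and not identically zero, and $\tilde v=(m,V)\oplus w_t$ would exhibit $\tilde v$ as the sum of a fuzzy number and the nontrivial symmetric $w_t$, contradicting skewness. Hence $\tilde r=V$ and $\tilde v=v$, so the core is unique, and the summand $w=u\ominus v$ (radius $r-V$) is forced, giving the unique decomposition. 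I expect the main obstacle to lie entirely in the second paragraph: recognizing that the minimal radius is exactly the total variation of the midpoint—the inequality $r_\al-r_\be\ge\mathrm{Var}_{[\al,\be]}(m)$ is what tames the otherwise delicate coupling of the two monotonicity constraints when $m$ is non-monotone—together with the bounded-variation bookkeeping (jumps, left-continuity) needed to confirm that $(m,V)$ and $s^*=r-V$ meet condition (VS2).
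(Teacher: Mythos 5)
Your argument is correct, but there is no internal proof to compare it against: the paper states Theorem \ref{fuzzy-number-main} as a combination of the main results of \cite{Qiu2014} and \cite{Chai2016}, and the closest in-house material is the general-$n$ construction of a Mare{\v s} core $\cu$ by intersecting over $\Up_u$ (Proposition \ref{cu-vec}, Theorem \ref{cu-core}) together with Corollary \ref{fuzzy-number-cu=ku}, where uniqueness for $n=1$ is again delegated to the cited works. Your route is therefore genuinely different and self-contained: instead of defining the core implicitly as an intersection over all admissible summands, you exhibit it explicitly as $v_\al=[m_\al-V_\al,\,m_\al+V_\al]$ with $V_\al=\mathrm{Var}_{[\al,1]}(m)$, and the single telescoping inequality $r_\al-r_\be\ge\mathrm{Var}_{[\al,\be]}(m)$ simultaneously yields minimality of $V$, skewness of $v$, and uniqueness. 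This buys an explicit formula for the core and for the symmetric remainder (radius $r-V$), and it isolates exactly the one-dimensional phenomenon behind uniqueness: a pointwise lower bound on the radius of any fuzzy number with prescribed midpoint that is actually attained by an admissible radius function. (It is precisely this lower bound that has no analogue for $n\ge 2$; compare Example \ref{vec-deomp-not-unique}.) The steps you leave implicit are standard bounded-variation facts --- finiteness of $\mathrm{Var}_{[\al,1]}(m)$, which already follows from your own estimate $\mathrm{Var}_{[\al,1]}(m)\le r_\al-r_1$, and the fact that the variation function is left-continuous on $(0,1]$ and right-continuous at $0$ wherever $m$ is, so that $V$, $m\pm V$ and $r-V$ satisfy \ref{vec-supp:x} --- and these are routine. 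I see no gap.
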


It is then natural to ask whether it is possible to establish the $n$-dimensional version of Theorem \ref{fuzzy-number-main} for general fuzzy vectors. Unfortunately, a negative answer will be given in Section \ref{Mares} (Theorem \ref{main}).

Based on the representation of fuzzy vectors through convex bodies and support functions in Section \ref{Representation}, Theorem \ref{vec-sum-supp} describes the sum of fuzzy vectors defined by Zadeh's extension principle through the \emph{Minkowski sum} of convex bodies and the sum of their support functions, which is the cornerstone of the results of Section \ref{Mares}. Then, in Subsection \ref{Vec-Sym}, the notion of \emph{symmetric fuzzy vector} is defined as \emph{symmetric around the origin} in accordance with the case of $n=1$ (cf. \cite[Remark 2.1]{Qiu2014}); using the language of convex bodies and support functions, a fuzzy vector is symmetric whenever its level sets are closed balls centered at the origin, or whenever the support functions of its level sets are constant (Theorem \ref{sym-vec-supp}).

The notion of symmetric fuzzy vector allows us to postulate \emph{skew fuzzy vectors} and \emph{Mare{\v s} cores} of fuzzy vectors in Subsection \ref{Vec-Skew}. However, for the purpose of studying their properties we have to be familiar with the \emph{inner parallel bodies} of convex bodies, and this is the subject of Subsection \ref{IPB}, in which we characterize inner parallel bodies through support functions, and prove that every convex body can be uniquely decomposed as the Minkowski sum of an irreducible convex body and a closed ball centered at the origin (Theorem \ref{cb-decomp}).

The first part of Subsection \ref{Vec-Skew} is devoted to the decomposition
\begin{equation} \label{u=cu-oplus-su}
u=\cu\oplus\su
\end{equation}
of each fuzzy vector $u\in\CF^n$, where $\cu$ is a Mare{\v s} core of $u$, and $\su$ is a symmetric fuzzy vector (Theorem \ref{cu-core}). However, unlike Theorem \ref{fuzzy-number-main} for the case of $n=1$, Example \ref{vec-deomp-not-unique} reveals that Equation \eqref{u=cu-oplus-su} may not be the unique way of decomposing a fuzzy vector. Hence, we indeed obtain a negative answer to the possibility of establishing the $n$-dimensional version of Theorem \ref{fuzzy-number-main}, which is stated as Theorem \ref{main}.

Finally, we investigate \emph{Mare{\v s} equivalent} fuzzy vectors in Subsection \ref{Mares-Equiv}. As we shall see, comparing with Mare{\v s} equivalent fuzzy numbers (see Corollary \ref{fuzzy-number-cu=ku}), the Mare{\v s} equivalence relation of fuzzy vectors may behave in quite different ways. As Example \ref{ku-not-Mares-core} reveals, the smallest fuzzy vector $\ku$ of the Mare{\v s} equivalence class of a fuzzy vector $u$ may not be a Mare{\v s} core of $u$, and different skew fuzzy vectors may be Mare{\v s} equivalent to each other.

\section{Representation of fuzzy vectors via support functions} \label{Representation}

\subsection{Convex bodies via support functions} \label{CB-Supp}

Throughout, let $\bbR^n$ denote the $n$-dimensional Euclidean space. Following the terminologies of convex geometry \cite{Gruber2007,Schneider2013}, by a \emph{convex body} in $\bbR^n$ we mean a nonempty, compact and convex subset of $\bbR^n$; that is, $A\subseteq\bbR^n$ is a convex body if it is nonempty, closed, bounded and
$$\lam s+(1-\lam)t\in A$$
whenever $s,t\in A$ and $\lam\in[0,1]$. The set of all convex bodies in $\bbR^n$ is denoted by $\CC^n$.

Let $\langle -,-\rangle$ denote the standard Euclidean inner product of $\bbR^n$. A \emph{hyperplane} $H$ in $\bbR^n$ is usually denoted by
\begin{equation} \label{hyperplane-def}
H=\{t\in\bbR^n\mid \langle t,x_0\rangle=\al\}
\end{equation}
for some $x_0\in\bbR^n\setminus\{o\}$ and $\al\in\bbR$, where $o$ is the origin of $\bbR^n$, and $x_0$ is called a \emph{normal vector} of $H$. Each hyperplane $H$ given by \eqref{hyperplane-def} divides $\bbR^n$ into two closed \emph{halfspaces}
\begin{equation} \label{halfspace-def}
H^-:=\{t\in\bbR^n\mid \langle t,x_0\rangle\leq\al\}\quad\text{and}\quad H^+:=\{t\in\bbR^n\mid \langle t,x_0\rangle\geq\al\}.
\end{equation}

Let $A\subseteq\bbR^n$ be closed and convex. For every $x\in\bbR^n$, there exists a unique point $p_A(x)\in A$ such that
\begin{equation} \label{pA-def}
||x-p_A(x)||=d(x,A):=\bw_{a\in A}||x-a||,
\end{equation}
where $||\text{-}||$ refers to the standard Euclidean norm on $\bbR^n$. Indeed, the existence of $p_A(x)$ is obvious by the closedness of $A$. For the uniqueness of $p_A(x)$, suppose that $q_A(x)\in A$ also satisfies \eqref{pA-def}, but $p_A(x)\neq q_A(x)$. Then $\dfrac{p_A(x)+q_A(x)}{2}\in A$ by the convexity of $A$, but
$$\left|\left|x-\dfrac{p_A(x)+q_A(x)}{2}\right|\right|=\left|\left|\dfrac{x-p_A(x)}{2}+\dfrac{x-q_A(x)}{2}\right|\right|
<\left|\left|\dfrac{x-p_A(x)}{2}\right|\right|+\left|\left|\dfrac{x-q_A(x)}{2}\right|\right|=||x-p_A(x)||=||x-q_A(x)||;$$
that is, $\dfrac{p_A(x)+q_A(x)}{2}$ is strictly closer to $x$ than $p_A(x)$ and $q_A(x)$, contradicting to the fact that $p_A(x)$ and $q_A(x)$ both satisfy Equation \eqref{pA-def}. Thus we obtain a well-defined map
$$p_A:\bbR^n\to A,$$
called the \emph{metric projection} of $A$. It is obvious that $p_A(a)=a$ for all $a\in A$.

\begin{lem} (See \cite{Gruber2007,Schneider2013}.) \label{metric-projection-non-expansive}
If $A\subseteq\bbR^n$ is closed and convex, then the metric projection $p_A:\bbR^n\to A$ is \emph{non-expansive} in the sense that
$$||p_A(x)-p_A(y)||\leq||x-y||$$
for all $x,y\in\bbR^n$.
\end{lem}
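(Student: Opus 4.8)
The plan is to reduce the non-expansiveness to the standard \emph{variational (obtuse-angle) characterization} of the metric projection, which is the only place where convexity is genuinely used. First I would establish the auxiliary inequality: for every $x\in\bbR^n$ and every $a\in A$,
$$\langle x-p_A(x),\,a-p_A(x)\rangle\leq 0.$$
To prove this, fix $a\in A$ and use the convexity of $A$ to note that the whole segment $p_A(x)+\lam(a-p_A(x))=(1-\lam)p_A(x)+\lam a$ lies in $A$ for every $\lam\in[0,1]$. Since $p_A(x)$ realizes the minimal distance $d(x,A)$, the quadratic function
$$f(\lam)=||x-p_A(x)-\lam(a-p_A(x))||^2=||x-p_A(x)||^2-2\lam\langle x-p_A(x),a-p_A(x)\rangle+\lam^2||a-p_A(x)||^2$$
attains its minimum over $[0,1]$ at the endpoint $\lam=0$; hence the one-sided derivative satisfies $f'(0)=-2\langle x-p_A(x),a-p_A(x)\rangle\geq 0$, which is exactly the claimed inequality.

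Next I would apply this inequality twice. Writing $p=p_A(x)$ and $q=p_A(y)$, the inequality for $x$ with the choice $a=q$ gives $\langle x-p,q-p\rangle\leq 0$, equivalently $\langle x-p,p-q\rangle\geq 0$; the inequality for $y$ with the choice $a=p$ gives $\langle y-q,p-q\rangle\leq 0$. Expanding the difference
$$\langle x-y,\,p-q\rangle-||p-q||^2=\langle(x-p)-(y-q),\,p-q\rangle=\langle x-p,p-q\rangle-\langle y-q,p-q\rangle,$$
both summands on the right are nonnegative by the two inequalities, whence $\langle x-y,p-q\rangle\geq||p-q||^2$.

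Finally, by the Cauchy--Schwarz inequality I would conclude
$$||p-q||^2\leq\langle x-y,\,p-q\rangle\leq||x-y||\,||p-q||,$$
so that dividing by $||p-q||$ (the case $p=q$ being trivial) yields $||p_A(x)-p_A(y)||\leq||x-y||$, as desired.

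As for the main obstacle: the only substantive step is the variational inequality, where the argument must simultaneously exploit the convexity of $A$ (to keep the segment $[p_A(x),a]$ inside $A$) and the one-sided minimality at the endpoint $\lam=0$ (so that only $f'(0)\geq 0$, not $f'(0)=0$, may be inferred). Everything after that is routine inner-product manipulation together with Cauchy--Schwarz.
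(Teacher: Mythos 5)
Your proof is correct. It rests on the standard variational characterization $\langle x-p_A(x),a-p_A(x)\rangle\leq 0$ for all $a\in A$, derived by minimizing the quadratic $f(\lam)$ along the segment $[p_A(x),a]$ (which convexity keeps inside $A$), and you are right to insist on the one-sided inequality $f'(0)\geq 0$ rather than $f'(0)=0$. The two applications of this inequality, the expansion of $\langle x-y,p-q\rangle-||p-q||^2$, and the final Cauchy--Schwarz step are all accurate, and the degenerate case $p=q$ is handled. The paper proves the same obtuse-angle fact but packages it geometrically: it constructs the two parallel hyperplanes $H_x$ and $H_y$ through $p_A(x)$ and $p_A(y)$ with common normal $p_A(x)-p_A(y)$, shows that $x$ and $y$ lie on the outer sides of the slab they bound, and concludes that $||x-y||$ is at least the width $||p_A(x)-p_A(y)||$ of that slab; moreover it only writes out the case $x,y\in\bbR^n\setminus A$ and asserts the remaining cases are analogous. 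Your algebraic route buys two things over the paper's: it treats all positions of $x$ and $y$ uniformly (when $x\in A$ the variational inequality holds trivially since $p_A(x)=x$), and it replaces the informal ``distance between parallel hyperplanes'' step with an explicit Cauchy--Schwarz estimate, in fact yielding the slightly stronger firm nonexpansiveness inequality $\langle x-y,p-q\rangle\geq||p-q||^2$ along the way. The paper's version, in exchange, is more visual and dovetails with the support-hyperplane machinery it develops immediately afterwards.
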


\begin{proof}
We only prove the case of $x,y\in\bbR^n\setminus A$, while the rest cases can be treated analogously. Since the conclusion holds trivially when $p_A(x)=p_A(y)$, suppose that $p_A(x)\neq p_A(y)$. In this case, the convexity of $A$ guarantees that the line segment $[p_A(x),p_A(y)]\subseteq A$. Considering the hyperplane
$$H_x:=\{t\in\bbR^n\mid\langle t,p_A(x)-p_A(y)\rangle=\langle p_A(x), p_A(x)-p_A(y)\rangle\},$$
it is clear that $p_A(y)\in H_x^-$. We claim that $x\in H_x^+$. In fact, if $x\in\bbR^n\setminus H_x^+$, then $\langle x-p_A(x),p_A(y)-p_A(x)\rangle>0$, and consequently the angle between $x-p_A(x)$ and $p_A(y)-p_A(x)$ is acute. This means that the line segment $[p_A(x),p_A(y)]$ contains a point which is strictly closer to $x$ than $p_A(x)$, contradicting to the definition of $p_A(x)$ (see Equation \eqref{pA-def}). Similarly, for the hyperplane
$$H_y:=\{t\in\bbR^n\mid\langle t,p_A(x)-p_A(y)\rangle=\langle p_A(y), p_A(x)-p_A(y)\rangle\}$$
we may deduce that $p_A(x)\in H_y^+$ and $y\in H_y^-$. 

Since $p_A(x)\in H_x$, $p_A(y)\in H_y$ and $p_A(x)-p_A(y)$ is the normal vector of both the hyperplanes $H_x$ and $H_y$, the distance between $H_x$ and $H_y$ is precisely $||p_A(x)-p_A(y)||$. From $x\in H_x^+$, $p_A(y)\in H_x^-$, $p_A(x)\in H_y^+$ and $y\in H_y^-$ we see that the distance between $x$ and $y$ is no less than the distance between $H_x$ and $H_y$; that is, $||p_A(x)-p_A(y)||\leq||x-y||$.
\end{proof}

Let $A\subseteq\bbR^n$ be a subset, and let $H\subseteq\bbR^n$ be a hyperplane. We say that $H$ \emph{supports} $A$ at $t_0$ if $t_0\in A\cap H$ and either $A\subseteq H^+$ or $A\subseteq H^-$; in this case, $t_0$ necessarily lies in the \emph{boundary} $\bd A$ of $A$, and $H$ is called a \emph{support hyperplane} of $A$. If a hyperplane $H$ given by \eqref{hyperplane-def} supports $A$ and $A\subseteq H^-$, then $x_0$ is called an \emph{exterior normal vector} of $H$.

\begin{lem} \label{convex-body-support-hyperplane-not-in}
If $A\subseteq\bbR^n$ is closed and convex, then for each $x\in\bbR^n\setminus A$, there exists a hyperplane $H$ that supports $A$ at $p_A(x)$, with $x-p_A(x)$ being its exterior normal vector.
\end{lem}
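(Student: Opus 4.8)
The plan is to exhibit the support hyperplane explicitly, taking $x_0:=x-p_A(x)$ as the prescribed normal vector. Since $x\notin A$ we have $x_0\neq o$, so
$$H:=\{t\in\bbR^n\mid\langle t,x-p_A(x)\rangle=\langle p_A(x),x-p_A(x)\rangle\}$$
is a genuine hyperplane with normal vector $x_0$, and $p_A(x)\in A\cap H$ holds by construction. Moreover $x$ lies in the opposite halfspace, since
$$\langle x,x_0\rangle-\langle p_A(x),x_0\rangle=\langle x-p_A(x),x-p_A(x)\rangle=||x_0||^2>0,$$
so that $x\in H^+$, the geometrically natural placement of $x$ relative to the hyperplane through its nearest point in $A$. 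It remains to verify that $A\subseteq H^-$, which will simultaneously identify $x_0$ as an exterior normal vector and, by the definition of support, force $p_A(x)\in\bd A$.

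The heart of the argument is to show that every $a\in A$ satisfies $\langle a-p_A(x),x-p_A(x)\rangle\leq 0$, i.e. $a\in H^-$; this is precisely the obtuse-angle criterion for metric projections, and it is established by the same technique already invoked in the proof of Lemma \ref{metric-projection-non-expansive}. Suppose to the contrary that $\langle a-p_A(x),x-p_A(x)\rangle>0$ for some $a\in A$. By convexity of $A$, the whole segment $p_A(x)+\lam(a-p_A(x))$, $\lam\in[0,1]$, lies in $A$. Writing $v:=a-p_A(x)$, a direct expansion gives
$$||x-(p_A(x)+\lam v)||^2=||x-p_A(x)||^2-2\lam\langle x-p_A(x),v\rangle+\lam^2||v||^2,$$
whose right-hand side is strictly smaller than $||x-p_A(x)||^2=d(x,A)^2$ for all sufficiently small $\lam>0$, because the negative linear term $-2\lam\langle x-p_A(x),v\rangle$ dominates the quadratic term $\lam^2||v||^2$ near $\lam=0$. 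This produces a point of $A$ strictly closer to $x$ than $p_A(x)$, contradicting the defining property \eqref{pA-def} of $p_A(x)$. Hence $A\subseteq H^-$.

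Combining these observations, $H$ meets $A$ at $p_A(x)$, contains $A$ within $H^-$, and has $x_0=x-p_A(x)$ as its exterior normal vector; in particular $p_A(x)\in\bd A$, so $H$ supports $A$ at $p_A(x)$ as required. I expect no serious obstacle beyond the obtuse-angle estimate: the only subtlety is ensuring that the quadratic term $\lam^2||v||^2$ does not overwhelm the linear decrease, which is handled simply by taking $\lam$ small, exactly as in the proof of Lemma \ref{metric-projection-non-expansive}.
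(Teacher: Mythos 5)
Your proposal is correct and follows essentially the same route as the paper: you construct the identical hyperplane $H=\{t\in\bbR^n\mid\langle t,x-p_A(x)\rangle=\langle p_A(x),x-p_A(x)\rangle\}$ and derive a contradiction from any $a\in A\setminus H^-$ by producing a point of the segment $[p_A(x),a]$ strictly closer to $x$. The only difference is cosmetic: where the paper argues informally that the acute angle yields a closer point, you make this precise with the explicit quadratic expansion in $\lam$, which is a welcome tightening but not a different argument.
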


\begin{proof}
Let
$$H=\{t\in\bbR^n\mid\langle t,x-p_A(x)\rangle=\langle p_A(x),x-p_A(x)\rangle\}.$$
Then $p_A(x)\in A\cap H$, and we claim that $A\subseteq H^-$. In fact, if there exists $z\in A\cap(\bbR^n\setminus H^-)$, then the line segment $[p_A(x),z]\subseteq A$ by the convexity of $A$. Note that $z\in\bbR^n\setminus H^-$ means that $\langle z-p_A(x),x-p_A(x)\rangle>0$, and consequently the angle between $z-p_A(x)$ and $x-p_A(x)$ is acute. Thus, the line segment $[p_A(x),z]$ must contain a point which is strictly closer to $x$ than $p_A(x)$, contradicting to the definition of $p_A(x)$.
\end{proof}

The following theorem is well known in convex geometry, and we present a proof here for the sake of self-containment:

\begin{thm} (See \cite{Gruber2007,Schneider2013}.) \label{convex-body-support-hyperplane}
If $A\subseteq\bbR^n$ is closed and convex, then for each $t_0\in\bd A$, there exists a (not necessarily unique) hyperplane $H$ that supports $A$ at $t_0$.
\end{thm}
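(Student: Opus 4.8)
The plan is to reduce the boundary case to the already-settled exterior case of Lemma \ref{convex-body-support-hyperplane-not-in} by an approximation-and-compactness argument. Since $t_0\in\bd A$, every neighborhood of $t_0$ meets the complement of $A$, so I would first choose a sequence $(x_k)_{k\geq 1}$ in $\bbR^n\setminus A$ with $x_k\to t_0$. For each $k$, Lemma \ref{convex-body-support-hyperplane-not-in} furnishes a support hyperplane of $A$ at $p_A(x_k)$ whose exterior normal vector is $x_k-p_A(x_k)\neq o$; normalizing, I would set $u_k:=(x_k-p_A(x_k))/||x_k-p_A(x_k)||\in S^{n-1}$, so that the support condition $A\subseteq H_k^-$ reads
$$\langle a,u_k\rangle\leq\langle p_A(x_k),u_k\rangle\quad\text{for all }a\in A.$$

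Next I would control the base points $p_A(x_k)$. Because $t_0\in A$ gives $p_A(t_0)=t_0$, the non-expansiveness of the metric projection (Lemma \ref{metric-projection-non-expansive}) yields $||p_A(x_k)-t_0||=||p_A(x_k)-p_A(t_0)||\leq||x_k-t_0||\to 0$, hence $p_A(x_k)\to t_0$. The unit normals $u_k$ all live in the compact sphere $S^{n-1}$, so I would pass to a subsequence along which $u_k\to u$ for some $u\in S^{n-1}$.

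Finally I would let $k\to\infty$ along this subsequence in the displayed inequality. For each fixed $a\in A$, the left-hand side $\langle a,u_k\rangle\to\langle a,u\rangle$, while on the right-hand side $\langle p_A(x_k),u_k\rangle\to\langle t_0,u\rangle$ since both factors converge; the inequality therefore survives the limit to give $\langle a,u\rangle\leq\langle t_0,u\rangle$ for all $a\in A$. Setting
$$H:=\{t\in\bbR^n\mid\langle t,u\rangle=\langle t_0,u\rangle\},$$
I obtain $t_0\in A\cap H$ and $A\subseteq H^-$, so $H$ supports $A$ at $t_0$ with exterior normal $u$, as desired.

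The routine parts are choosing the approximating sequence and verifying the continuity of $p_A$, both of which are immediate from what precedes. The one step that genuinely requires care — the main obstacle — is the passage to the limit: the normal directions $u_k$ need not converge on their own, so the compactness of $S^{n-1}$ is essential to extract a convergent subsequence, and I must check that the bilinear inner product is jointly continuous enough that the support inequality is preserved when \emph{both} the normal vector and the base point vary simultaneously. This is why controlling $p_A(x_k)\to t_0$ via Lemma \ref{metric-projection-non-expansive} and selecting the subsequence are carried out before taking limits.
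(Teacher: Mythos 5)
Your argument is correct and coincides with the paper's own proof: both approximate $t_0$ by exterior points, invoke Lemma \ref{convex-body-support-hyperplane-not-in} to get hyperplanes supporting $A$ at the projections, use Lemma \ref{metric-projection-non-expansive} to force $p_A(x_k)\to t_0$, extract a convergent subsequence of unit normals by compactness of $S^{n-1}$, and pass to the limit in the support inequality. No gaps; the points you flag as needing care (joint convergence of base points and normals) are handled exactly as in the paper.
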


\begin{proof}
Let $\{t_k\}\subseteq\bbR^n\setminus A$ be a sequence that converges to $t_0\in\bd A$, which induces a sequence $\{p_A(t_k)\}\subseteq\bd A$. For each positive integer $k$, by Lemma \ref{convex-body-support-hyperplane-not-in} we may find a hyperplane 
$$H_k=\{t\in\bbR^n\mid\langle t,t_k-p_A(t_k)\rangle=\langle p_A(t_k),t_k-p_A(t_k)\rangle\}$$
that supports $A$ at $p_A(t_k)$, with $t_k-p_A(t_k)$ being its exterior normal vector. Let
$$x_k:=\dfrac{t_k-p_A(t_k)}{||t_k-p_A(t_k)||}.$$
Then $x_k$ belongs to $S^{n-1}$, the unit sphere in $\bbR^n$. By the compactness of $S^{n-1}$, the sequence $\{x_k\}$ has a convergent subsequence, and without loss generality we may suppose that $\{x_k\}$ itself converges to $x_0\in S^{n-1}$. Note that
$$||p_A(t_k)-t_0||=||p_A(t_k)-p_A(t_0)||\leq||t_k-t_0||$$
by Lemma \ref{metric-projection-non-expansive}, which necessarily forces $\lim\limits_{k\ra\infty}p_A(t_k)=t_0$ as we already have $\lim\limits_{k\ra\infty}t_k=t_0$. We claim that the hyperplane
$$H=\{t\in\bbR^n\mid\langle t,x_0\rangle=\langle t_0,x_0\rangle\}$$
supports $A$ at $t_0$, with $x_0$ being its exterior normal vector. To see this, note that $t_0\in A\cap H$ is obvious, and for every positive integer $k$ we have $A\subseteq H_k^-$, which implies that
$$\langle a,t_k-p_A(t_k)\rangle\leq\langle p_A(t_k),t_k-p_A(t_k)\rangle,\quad\text{i.e.,}\quad\langle a,x_k\rangle\leq\langle p_A(t_k),x_k\rangle$$
for all $a\in A$. By letting $k\ra\infty$ in the above inequality we immediately obtain that $\langle a,x_0\rangle\leq\langle t_0,x_0\rangle$ for all $a\in A$, and consequently $A\subseteq H^-$, which completes the proof.
\end{proof}

Recall that the \emph{support function} \cite{Gruber2007,Schneider2013} of a convex body $A\in\CC^n$ is given by
\begin{equation} \label{hA-def}
h_A:S^{n-1}\to\bbR,\quad h_A(x):=\bv_{a\in A}\langle a,x\rangle,
\end{equation}
where $S^{n-1}$ is the unit sphere in $\bbR^n$. Obviously, $h_A$ is bounded on $S^{n-1}$; indeed,
$$|h_A(x)|\leq\bv_{a\in A}||a||$$
for all $x\in S^{n-1}$.

\begin{rem} \label{cb-supp-remark}
The domain of the support function of a convex body $A\in\CC^n$ is defined as $\bbR^n$ in \cite{Gruber2007,Schneider2013}; that is,
\begin{equation} \label{hA-def-Rn}
h_A:\bbR^n\to\bbR,\quad h_A(x):=\bv_{a\in A}\langle a,x\rangle.
\end{equation}
In fact, the function $h_A$ given by \eqref{hA-def-Rn} is completely determined by its values on $S^{n-1}$, since it always holds that
$$h_A(o)=0\quad\text{and}\quad h_A(x)=||x||\cdot h_A\left(\dfrac{x}{||x||}\right)$$
for all $x\in\bbR^n\setminus\{o\}$. Therefore, it does no harm to restrict the domain of $h_A$ to $S^{n-1}$.
\end{rem}

Conversely, to each function $h:S^{n-1}\to\bbR$ we may associate a subset
\begin{equation} \label{Ah-def}
A_h:=\{t\in\bbR^n\mid\forall x\in S^{n-1}:\ \langle t,x\rangle\leq h(x)\}
\end{equation}
of $\bbR^n$. Convex bodies can be fully characterized through support functions as follows, which is a modification of \cite[Theorem 4.3]{Gruber2007} and \cite[Theorem 1.7.1]{Schneider2013}:

\begin{thm} \label{cb-supp}
A function $h:S^{n-1}\to\bbR$ is the support function of a (unique) convex body $A_h$ if, and only if, $h$ is \emph{sublinear} in the sense that
$$h(x)+h(-x)\geq 0$$
and
$$h\left(\dfrac{\lam x+(1-\lam)y}{||\lam x+(1-\lam)y||}\right)\leq\dfrac{\lam h(x)+(1-\lam)h(y)}{||\lam x+(1-\lam)y||}$$
for all $x,y\in S^{n-1}$, $\lam\in[0,1]$ with $\lam x+(1-\lam)y\neq 0$.
\end{thm}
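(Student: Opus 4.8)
The plan is to handle the two implications separately; the forward implication together with the uniqueness claim is routine, whereas the converse carries the real content. For necessity, suppose $h=h_A$ for some $A\in\CC^n$. Fixing any $a\in A$ (possible since $A\neq\emptyset$), we get $h_A(x)+h_A(-x)\geq\langle a,x\rangle+\langle a,-x\rangle=0$, and the displayed convexity inequality is immediate from the fact that $h_A$ is a pointwise supremum of the linear maps $x\mapsto\langle a,x\rangle$. For uniqueness, I would show that every convex body is recovered from its support function as $A=A_{h_A}$: the inclusion $A\subseteq A_{h_A}$ is definitional, and conversely, if $t\notin A$, then setting $x=(t-p_A(t))/||t-p_A(t)||$ and invoking Lemma \ref{convex-body-support-hyperplane-not-in} gives $h_A(x)=\langle p_A(t),x\rangle<\langle t,x\rangle$, so $t\notin A_{h_A}$.

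For sufficiency, let $h$ be sublinear. The first step is to pass to the positively homogeneous extension $\bar h:\bbR^n\to\bbR$ from Remark \ref{cb-supp-remark}, given by $\bar h(o)=0$ and $\bar h(x)=||x||\,h(x/||x||)$. Multiplying the displayed convexity inequality by $||\lam x+(1-\lam)y||$ rewrites it as $\bar h(\lam x+(1-\lam)y)\leq\lam\bar h(x)+(1-\lam)\bar h(y)$ for unit vectors $x,y$, and combined with homogeneity this upgrades to subadditivity $\bar h(x+y)\leq\bar h(x)+\bar h(y)$ on all of $\bbR^n$. The only configuration escaping the convexity inequality is the antipodal one $y=-x$, where $\lam x+(1-\lam)y$ may vanish, and this is precisely the case repaired by the condition $h(x)+h(-x)\geq 0$. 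Hence $\bar h$ is sublinear in the classical sense, so it is convex and finite, therefore continuous; in particular $h$ is bounded on the compact sphere $S^{n-1}$.

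Now define $A_h$ by \eqref{Ah-def}. As an intersection of closed halfspaces it is closed and convex, and it is bounded since any $t\in A_h$ with $t\neq o$ satisfies $||t||=\langle t,t/||t||\rangle\leq h(t/||t||)\leq\bv_{x\in S^{n-1}}h(x)<\infty$; thus $A_h$ is compact. The heart of the matter is to prove simultaneously that $A_h\neq\emptyset$ and that $h_{A_h}=h$, and for this I would run a supporting-hyperplane argument on the epigraph $E:=\epi\bar h\subseteq\bbR^{n+1}$, which is closed and convex. Fix $x_0\in S^{n-1}$; since $\bar h$ is finite and continuous, $(x_0,h(x_0))\in\bd E$, so Theorem \ref{convex-body-support-hyperplane} furnishes a supporting hyperplane of $E$ there. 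Because $E$ contains the entire upward vertical ray over each of its points, this hyperplane cannot be vertical; normalizing its exterior normal to the form $(t_0,-1)$ and using that homogeneity makes $E$ a cone with apex $o$ (so the hyperplane passes through the origin) one deduces $\langle t_0,x\rangle\leq\bar h(x)$ for all $x\in\bbR^n$, with equality at $x_0$. Restricting to $S^{n-1}$ yields $t_0\in A_h$ and $\langle t_0,x_0\rangle=h(x_0)$, so $h_{A_h}(x_0)\geq h(x_0)$; the reverse inequality is immediate from the definition of $A_h$. As $x_0$ is arbitrary, $A_h$ is nonempty, hence a convex body, and $h=h_{A_h}$.

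I expect the main obstacle to be exactly this last step: one must exclude a vertical supporting hyperplane, which would carry no information about a functional on $\bbR^n$, and then exploit positive homogeneity both to promote the local supporting relation into the global inequality $\langle t_0,-\rangle\leq\bar h$ and to pin the offset to zero so that equality is attained at $x_0$. Once the supporting functional $t_0$ is produced for each $x_0$, the remaining verifications — closedness, convexity, boundedness, and the two matching inequalities delivering $h_{A_h}=h$ — are straightforward bookkeeping.
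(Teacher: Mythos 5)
Your proposal is correct and follows essentially the same route as the paper's proof: necessity by viewing $h_A$ as a supremum of linear functionals, sufficiency by passing to the positively homogeneous extension, establishing its convexity and continuity, and running a supporting-hyperplane argument on the epigraph (using that the hyperplane is non-vertical and passes through the apex of the cone) to produce, for each $x_0\in S^{n-1}$, a point $t_0\in A_h$ with $\langle t_0,x_0\rangle=h(x_0)$, and uniqueness via the metric projection and Lemma \ref{convex-body-support-hyperplane-not-in}. The only cosmetic differences are that you bound $A_h$ via the continuity of $\bar h$ on the sphere rather than via the coordinate estimate with the standard basis, and that the non-verticality of the supporting hyperplane is more precisely justified by the finiteness of $\bar h$ on all of $\bbR^n$ than by the presence of upward vertical rays alone.
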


Before proving this theorem, let us recall that a function $f:\bbR^n\to\bbR$ is \emph{convex} if
$$f(\lam x+(1-\lam)y)\leq\lam f(x)+(1-\lam)f(y)$$
for all $x,y\in\bbR^n$ and $\lam\in[0,1]$.

\begin{lem} (See \cite{Gruber2007,Schneider2013}.) \label{f-convex-epif-convex}
A function $f:\bbR^n\to\bbR$ is convex if, and only if, its \emph{epigraph}
$$\epi f:=\{(x,\al)\in\bbR^n\times\bbR\mid f(x)\leq\al\}\subseteq\bbR^{n+1}$$
is convex. In this case, $f$ is necessarily continuous on $\bbR^n$ and, consequently, $\epi f$ is closed.
\end{lem}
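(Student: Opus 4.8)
The plan is to treat the equivalence and the regularity claims separately, since the equivalence is a direct translation of definitions whereas the continuity assertion carries all the real content.

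First I would dispose of the equivalence ``$f$ convex $\iff\epi f$ convex'' by unwinding definitions. For the forward direction, given $(x,\al),(y,\be)\in\epi f$ and $\lam\in[0,1]$, convexity of $f$ yields $f(\lam x+(1-\lam)y)\leq\lam f(x)+(1-\lam)f(y)\leq\lam\al+(1-\lam)\be$, so the convex combination $(\lam x+(1-\lam)y,\ \lam\al+(1-\lam)\be)$ again lies in $\epi f$. Conversely, applying convexity of $\epi f$ to the two points $(x,f(x))$ and $(y,f(y))$, which belong to $\epi f$ by definition, produces exactly the defining inequality of a convex $f$. This step is routine and needs no estimates.

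The substantive step is continuity of the finite-valued convex function $f$ on all of $\bbR^n$, and this is where the difficulty lies. I would establish it via local Lipschitz continuity in three moves. (i) \emph{Local upper bound:} around any $x_0$ I enclose a ball $B(x_0,2r)$ inside the convex hull of finitely many points $v_0,\dots,v_m$ (e.g.\ vertices of a cube); writing any point of the ball as a convex combination of the $v_i$ and iterating the convexity inequality shows $f\leq M_0:=\max_i f(v_i)$ there. (ii) \emph{Local lower bound:} for $x\in B(x_0,2r)$ the reflected point $2x_0-x$ also lies in the ball, and from $x_0=\tfrac12 x+\tfrac12(2x_0-x)$ convexity gives $f(x)\geq 2f(x_0)-M_0$; combining (i) and (ii), $|f|\leq M$ on $B(x_0,2r)$ for some $M$. (iii) \emph{Lipschitz estimate:} for distinct $x,y\in B(x_0,r)$ I push the segment from $x$ through $y$ by an extra length $r$ to a point $z\in B(x_0,2r)$, so that $y$ becomes an explicit convex combination of $x$ and $z$; the convexity inequality then gives $f(y)-f(x)\leq\frac{2M}{r}\,||y-x||$, and by symmetry $|f(y)-f(x)|\leq\frac{2M}{r}\,||y-x||$. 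Hence $f$ is locally Lipschitz, in particular continuous.

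The hardest part is the passage from bare convexity to quantitative local boundedness in moves (i) and (ii): convexity only bounds $f$ \emph{from above} by convex combinations, so the lower bound must be extracted indirectly through the reflection trick, and one must check that the enclosing cube genuinely contains a neighborhood so that the iterated inequality applies to \emph{every} nearby point. Once continuity is in hand, closedness of $\epi f$ falls out immediately: since $\epi f=\{(x,\al)\mid\al-f(x)\geq 0\}$ is the preimage of $[0,\infty)$ under the continuous map $(x,\al)\mapsto\al-f(x)$, it is closed; equivalently, if $(x_k,\al_k)\in\epi f$ converges to $(x,\al)$, then $f(x)=\lim_k f(x_k)\leq\lim_k\al_k=\al$.
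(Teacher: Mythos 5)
Your proposal is correct and follows essentially the same route as the paper: the equivalence is the same definitional unwinding, and your continuity argument, like the paper's, rests on bounding $f$ above near $x_0$ by the maximum of its values at finitely many enclosing points and then exploiting convexity along segments through the ball (the paper uses a simplex together with Jensen's inequality where you use a cube, and it stops at the pointwise estimate $|f(y)-f(x_0)|\leq\frac{c-f(x_0)}{\rho}\,||y-x_0||$ where you push on to a full local Lipschitz bound via the reflection and segment-extension tricks). The closedness of $\epi f$ is obtained identically from continuity.
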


\begin{proof}
{\bf Step 1.} $f$ is convex if, and only if, $\epi f$ is convex. If $f$ is convex, for any $(x,\al),(y,\be)\in\epi f$ and $\lam\in[0,1]$ we have
$$f(\lam x+(1-\lam)y)\leq\lam f(x)+(1-\lam)f(y)\leq\lam\al+(1-\lam)\be;$$
that is, $\lam(x,\al)+(1-\lam)(y,\be)=(\lam x+(1-\lam)y,\lam\al+(1-\lam)\be)\in\epi f$. Thus $\epi f$ is convex.

Conversely, if $\epi f$ is convex, for any $x,y\in\bbR^n$ and $\lam\in[0,1]$ we have
$$(\lam x+(1-\lam)y,\lam f(x)+(1-\lam)f(y))=\lam(x,f(x))+(1-\lam)(y,f(y))\in\epi f$$
because $(x,f(x)),(y,f(y))\in\epi f$; that is, $f(\lam x+(1-\lam)y)\leq\lam f(x)+(1-\lam)f(y)$, showing that $f$ is convex.

{\bf Step 2.} If $f:\bbR^n\to\bbR$ is convex, then $f$ is continuous on $\bbR^n$. To this end, for any $x_0\in\bbR^n$ we choose a simplex 
$$S=\left\{\dsum\limits_{i=1}^{n+1}\lam_i x_i\mathrel{\Big|}\dsum\limits_{i=1}^{n+1}\lam_i=1\ \text{and}\ \lam_i\geq 0\ \text{for all}\ i=1,\dots,n+1\right\}$$ 
with vertices $x_1,\dots,x_{n+1}\in\bbR^n$, such that there exists an open ball $B(x_0,\rho)\subseteq S$ ($\rho>0$). Note that $f$ is clearly bounded on $S$ since, by Jensen's inequality (cf. \cite[Remark 1.5.1]{Schneider2013}),
$$f(x)=f\left(\dsum\limits_{i=1}^{n+1}\lam_i x_i\right)\leq\dsum\limits_{i=1}^{n+1}\lam_i f(x_i)\leq c:=\max\{f(x_1),\dots,f(x_{n+1})\}$$
for all $x=\dsum\limits_{i=1}^{n+1}\lam_i x_i\in S$. Now, for any $y=x_0+\lam t\in B(x_0,\rho)$ ($\lam\in[0,1]$, $||t||=\rho$), the convexity of $f$ implies that
$$f(y)=f(x_0+\lam t)=f((1-\lam)x_0+\lam(x_0+t))\leq(1-\lam)f(x_0)+\lam f(x_0+t),$$
and consequently $f(y)-f(x_0)\leq \lam(f(x_0+t)-f(x_0))\leq\lam(c-f(x_0))$ because $x_0+t\in S$. Similarly,
$$f(x_0)=f\left(\dfrac{1}{1+\lam}y+\dfrac{\lam}{1+\lam}(x_0-t)\right)\leq\dfrac{1}{1+\lam}f(y)+\dfrac{\lam}{1+\lam}f(x_0-t),$$
and consequently $f(x_0)-f(y)\leq\lam(f(x_0-t)-f(x_0))\leq\lam(c-f(x_0))$. It follows that
$$|f(y)-f(x_0)|\leq\lam(c-f(x_0))=\dfrac{c-f(x_0)}{\rho}||y-x_0||$$
for all $y\in B(x_0,\rho)$, which immediately implies the continuity of $f$ at $x_0$.

{\bf Step 3.} If $f:\bbR^n\to\bbR$ is continuous, then $\epi f$ is closed. This is easy since, for any sequence $\{(x_k,\al_k)\}\subseteq\epi f$ that converges to $(x,\al)$, $f(x)\leq\al$ becomes an immediate consequence of $f(x_k)\leq\al_k$ for all positive integers $k$ in conjunction with the continuity of $f$, which means precisely that $(x,\al)\in\epi f$. This completes the proof.
\end{proof}

\begin{proof}[Proof of Theorem \ref{cb-supp}]
{\bf Necessity.} Let $A\in\CC^n$ be a convex body. Then it is clear that
$$h_A(x)+h_A(-x)=\bv_{a\in A}\langle a,x\rangle+\bv_{a\in A}\langle a,-x\rangle=\bv_{a\in A}\langle a,x\rangle-\bw_{a\in A}\langle a,x\rangle\geq 0$$
and
$$h_A\left(\dfrac{\lam x+(1-\lam)y}{||\lam x+(1-\lam)y||}\right)=\bv_{a\in A}\left\langle a,\dfrac{\lam x+(1-\lam)y}{||\lam x+(1-\lam)y||}\right\rangle
\leq\dfrac{\lam\dbv_{a\in A}\langle a,x\rangle+(1-\lam)\dbv_{a\in A}\langle a,y\rangle}{||\lam x+(1-\lam)y||}=\dfrac{\lam h_A(x)+(1-\lam)h_A(y)}{||\lam x+(1-\lam)y||}$$
for all $x,y\in S^{n-1}$, $\lam\in[0,1]$ with $\lam x+(1-\lam)y\neq 0$.

{\bf Sufficiency.} Let $h:S^{n-1}\to\bbR$ be a sublinear function. Since
\begin{equation} \label{Ah-intersection}
A_h=\{t\in\bbR^n\mid\forall x\in S^{n-1}:\ \langle t,x\rangle\leq h(x)\}=\bigcap_{x\in S^{n-1}}\{t\in\bbR^n\mid\langle t,x\rangle\leq h(x)\}
\end{equation}
is the intersection of closed halfspaces which are necessarily convex, it is clearly closed and convex. Moreover, considering the standard basis $\{e_1,\dots,e_n\}$ of $\bbR^n$ we see that each coordinate of $t\in A_h$ is bounded by
$$\al_0:=\max\{|h(e_i)|,|h(-e_i)|\mid 1\leq i\leq n\},$$
and thus $||t||\leq\sqrt{n}\al_0$ for all $t\in A_h$; that is, $A_h$ is bounded. Next, we show that $A_h\neq\varnothing$ and $h_{A_h}=h$, so that $A_h$ is a convex body and $h$ is the support function of $A_h$.

Firstly, $A_h\neq\varnothing$. To see this, let us extend $h:S^{n-1}\to\bbR$ to $\th:\bbR^n\to\bbR$ as elaborated in Remark \ref{cb-supp-remark}, i.e.,
$$\th(x)=\begin{cases}
0 & \text{if}\ x=o,\\
||x||\cdot h\left(\dfrac{x}{||x||}\right) & \text{else}
\end{cases}$$
for all $x\in\bbR^n$. Then
\begin{equation} \label{th-sublinear}
\th(\lam x)=\lam\th(x)\quad\text{and}\quad\th(x+y)\leq\th(x)+\th(y)
\end{equation}
for all $x,y\in\bbR^n$ and $\lam\geq 0$, where the first equality follows obviously from the definition of $\th$, while the second inequality holds because
$$\th(o)=0\leq ||x||\cdot\left(h\left(\dfrac{x}{||x||}\right)+h\left(-\dfrac{x}{||x||}\right)\right)=\th(x)+\th(-x)$$
for all $x\in\bbR^n\setminus\{o\}$, and
\begin{align*}
\th(x+y)&=||x+y||\cdot h\left(\dfrac{x+y}{||x+y||}\right)=||x+y||\cdot h\left(\dfrac{\dfrac{||x||}{||x||+||y||}\cdot
\dfrac{x}{||x||}+\dfrac{||y||}{||x||+||y||}\cdot\dfrac{y}{||y||}}{\left|\left|\dfrac{||x||}{||x||+||y||}\cdot\dfrac{x}{||x||}+\dfrac{||y||}{||x||+||y||}\cdot\dfrac{y}{||y||}\right|\right|}\right)\\
&\leq||x+y||\cdot\dfrac{\dfrac{||x||}{||x||+||y||}\cdot
h\left(\dfrac{x}{||x||}\right)+\dfrac{||y||}{||x||+||y||}\cdot h\left(\dfrac{y}{||y||}\right)}{\left|\left|\dfrac{||x||}{||x||+||y||}\cdot\dfrac{x}{||x||}+\dfrac{||y||}{||x||+||y||}\cdot\dfrac{y}{||y||}\right|\right|}=\th(x)+\th(y)
\end{align*}
for all $x,y\in\bbR^n$ with $x+y\neq o$.

Since $\th:\bbR^n\to\bbR$ is clearly convex by \eqref{th-sublinear}, Lemma \ref{f-convex-epif-convex} tells us that 
$$\epi\th=\{(x,\al)\in\bbR^n\times\bbR\mid\th(x)\leq\al\}\subseteq\bbR^{n+1}$$ 
is closed and convex. By Theorem \ref{convex-body-support-hyperplane}, there exists a hyperplane $H_y$ that supports $\epi\th$ at any $(y,\th(y))\in\bd\epi\th$ with $y\neq o$. We claim that $H_y$ passes through the origin $(o,0)$ of $\bbR^n\times\bbR=\bbR^{n+1}$, so that $H_y$ also supports $\epi\th$ at $(o,0)$. In fact, if $(o,0)\not\in H_y$, then the straight line passing through $(o,0)$ and $(y,\th(y))$ intersects the hyperplane $H_y$ in exactly one point, i.e., $(y,\th(y))$. It follows that $(o,0)$ and $(2y,2\th(y))$ are on different sides of $H_y$. However, it is clear that $(o,0)$ and $(2y,2\th(y))$ are both in $\epi\th$, which contradicts to the fact that $H_y$ is a support hyperplane of $\epi\th$.

Note that the definition of $\epi\th$ indicates that the exterior normal vector of $H_y$ can be written as the form $(t_y,-1)$, which intuitively means that the exterior normal vector of $H_y$ ``points below $\bbR^n$'' in $\bbR^n\times\bbR$. Since $H_y$ also supports $\epi\th$ at $(o,0)$, we obtain
\begin{equation} \label{Hy-ty}
H_y=\{(x,\al)\in\bbR^n\times\bbR\mid\langle(x,\al),(t_y,-1)\rangle=\langle(o,0),(t_y,-1)\rangle=0\}.
\end{equation}
Since $\epi\th\subseteq H_y^-$, for any $x\in\bbR^n$, considering $(x,\th(x))\in\epi\th$ we have
$$\langle t_y,x\rangle-\th(x)=\langle(x,\th(x)),(t_y,-1)\rangle\leq 0,$$
i.e., $\langle t_y,x\rangle\leq\th(x)$. In particular, this means that $\langle t_y,x\rangle\leq h(x)$ for all $x\in S^{n-1}$, and consequently $t_y\in A_h$ (cf. \eqref{Ah-intersection}), showing that $A_h\neq\varnothing$.

Secondly, $h_{A_h}=h$. On one hand,
$$h_{A_h}(x)=\bv_{a\in A_h}\langle a,x\rangle\leq h(x)$$
for all $x\in S^{n-1}$ is an immediate consequence of \eqref{hA-def} and \eqref{Ah-intersection}. On the other hand, for any $y\in S^{n-1}$, from $(y,h(y))\in H_y$ we have $\langle t_y,y\rangle=h(y)$ by \eqref{Hy-ty}. Since $t_y\in A_h$, it follows that
$$h_{A_h}(y)=\bv_{a\in A_h}\langle a,y\rangle\geq\langle t_y,y\rangle=h(y)$$
for all $y\in S^{n-1}$. Hence $h_{A_h}=h$.

Finally, for the uniqueness of $A_h$, it remains to show that $A_{h_A}=A$ for any convex body $A\in\CC^n$. Note that $A\subseteq A_{h_A}$ follows easily from \eqref{hA-def} and \eqref{Ah-intersection}. For the reverse inclusion, we proceed by contradiction. Suppose that $t_0\in A_{h_A}$ but $t_0\not\in A$. Since $A$ is closed and convex, by Lemma \ref{convex-body-support-hyperplane-not-in} we may find a hyperplane $H$ that supports $A$ at $p_A(t_0)$, with
$$x_0:=\dfrac{t_0-p_A(t_0)}{||t_0-p_A(t_0)||}\in S^{n-1}$$ 
being its exterior normal vector, i.e.,
$$H=\{t\in\bbR^n\mid\langle t,x_0\rangle=\langle p_A(t_0),x_0\rangle\}.$$ 
Now, it follows from $A\subseteq H^-$ that $\langle a,x_0\rangle\leq\langle p_A(t_0),x_0\rangle$ for all $a\in A$, but $t_0\in\bbR^n\setminus H^-$ forces $\langle t_0,x_0\rangle>\langle p_A(t_0),x_0\rangle$; that is,
$$\langle t_0,x_0\rangle>\langle p_A(t_0),x_0\rangle\geq\bv_{a\in A}\langle a,x_0\rangle=h_A(x_0),$$
contradicting to $t_0\in A_{h_A}$. This completes the proof.
\end{proof}

As an immediate consequence of Theorem \ref{cb-supp}, the following characterization of elements of convex bodies will be useful later (cf. \cite[Theorem 2.2]{Zhang2001}):

\begin{prop} \label{cb-elements}
Let $A\in\CC^n$ be a convex body. Then $t\in A$ if, and only if, $\langle t,x\rangle\leq h_A(x)$ for all $x\in S^{n-1}$.
\end{prop}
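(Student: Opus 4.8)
The plan is to recognize that the claimed equivalence is nothing more than the set equality $A=A_{h_A}$, where $A_{h_A}$ is the subset associated to $h_A$ via \eqref{Ah-def}. Indeed, unwinding \eqref{Ah-def}, the condition ``$\langle t,x\rangle\leq h_A(x)$ for all $x\in S^{n-1}$'' is by definition the statement $t\in A_{h_A}$. Since $h_A$ is sublinear by the necessity part of Theorem \ref{cb-supp}, the identity $A_{h_A}=A$ has in fact already been established in the uniqueness argument at the end of the proof of Theorem \ref{cb-supp}; so the quickest route is simply to cite that theorem. For self-containment, however, I would prove the two implications directly, as follows.

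The necessity direction (if $t\in A$, then the inequalities hold) is immediate: for $t\in A$ and any $x\in S^{n-1}$, the definition \eqref{hA-def} gives $\langle t,x\rangle\leq\bv_{a\in A}\langle a,x\rangle=h_A(x)$.

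The only direction carrying content is sufficiency, and here I would reuse verbatim the inclusion $A_{h_A}\subseteq A$ proved at the end of Theorem \ref{cb-supp}. Arguing by contradiction, suppose $t_0$ satisfies $\langle t_0,x\rangle\leq h_A(x)$ for all $x\in S^{n-1}$ yet $t_0\notin A$. As $A$ is closed and convex, Lemma \ref{convex-body-support-hyperplane-not-in} supplies a hyperplane $H$ supporting $A$ at $p_A(t_0)$ with exterior normal vector $x_0:=(t_0-p_A(t_0))/\|t_0-p_A(t_0)\|\in S^{n-1}$, that is, $H=\{t\in\bbR^n\mid\langle t,x_0\rangle=\langle p_A(t_0),x_0\rangle\}$. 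From $A\subseteq H^-$ one gets $h_A(x_0)=\bv_{a\in A}\langle a,x_0\rangle\leq\langle p_A(t_0),x_0\rangle$, whereas $\langle t_0-p_A(t_0),x_0\rangle=\|t_0-p_A(t_0)\|>0$ forces $\langle t_0,x_0\rangle>\langle p_A(t_0),x_0\rangle\geq h_A(x_0)$, contradicting the assumed inequality at $x_0$. Hence $t_0\in A$.

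I expect no genuine obstacle: the substantive work, namely the supporting-hyperplane construction from the metric projection, has already been carried out for Lemma \ref{convex-body-support-hyperplane-not-in} and Theorem \ref{cb-supp}. The proposition therefore reduces to observing that it is a direct reformulation of the established identity $A=A_{h_A}$, with the sufficiency implication being the genuine geometric input and the necessity implication being a formality.
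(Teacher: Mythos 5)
Your proposal is correct and matches the paper's route exactly: the paper treats this proposition as an immediate consequence of Theorem \ref{cb-supp}, since the stated condition is precisely $t\in A_{h_A}$ and the identity $A_{h_A}=A$ was established in the uniqueness part of that theorem's proof. Your self-contained version of the sufficiency direction reproduces verbatim the separating-hyperplane argument already given there, so there is nothing to add.
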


In particular, constant support functions correspond to closed balls centered at the origin:

\begin{prop} \label{cb-ball}
A convex body $A\in\CC^n$ is a closed ball centered at the origin if, and only if, its support function $h_A:S^{n-1}\to\bbR$ is a (nonnegative) constant function. In this case, $h_A$ is necessarily constant at the radius of $A$.
\end{prop}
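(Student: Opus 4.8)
The plan is to prove both implications directly, the main engine being Proposition~\ref{cb-elements}, which characterizes membership in $A$ through $h_A$, combined with the elementary identity $\bigvee_{x\in S^{n-1}}\langle t,x\rangle=||t||$.

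For the necessity, suppose $A=\{t\in\bbR^n\mid ||t||\leq r\}$ is the closed ball of radius $r\geq 0$ centered at the origin. Fixing $x\in S^{n-1}$, the Cauchy--Schwarz inequality gives $\langle a,x\rangle\leq||a||\cdot||x||=||a||\leq r$ for every $a\in A$, while equality is attained at the point $a=rx$, which lies in $A$ since $||rx||=r$. Hence $h_A(x)=r$ for every $x\in S^{n-1}$, so $h_A$ is the constant function with nonnegative value $r$, and this constant is exactly the radius.

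For the sufficiency, suppose $h_A\equiv c$ is constant. First I would record that $c\geq 0$: since $A$ is nonempty, the sublinearity inequality $h_A(x)+h_A(-x)\geq 0$ established in the necessity part of Theorem~\ref{cb-supp} yields $2c\geq 0$. Next, by Proposition~\ref{cb-elements} we have $t\in A$ if, and only if, $\langle t,x\rangle\leq c$ for all $x\in S^{n-1}$. The key step is the observation that $\bigvee_{x\in S^{n-1}}\langle t,x\rangle=||t||$, the supremum being attained at $x=t/||t||$ when $t\neq o$ and equal to $0$ when $t=o$; consequently the membership condition ``$\langle t,x\rangle\leq c$ for all $x\in S^{n-1}$'' is equivalent to $||t||\leq c$. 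This identifies $A=\{t\in\bbR^n\mid ||t||\leq c\}$ as the closed ball of radius $c$ centered at the origin, and simultaneously confirms that $h_A$ is constant at the radius.

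I expect no serious obstacle here, as the argument is short once Proposition~\ref{cb-elements} is in hand. The only points demanding mild care are the nonnegativity of the constant, which is dispatched cleanly through sublinearity, and the degenerate case $c=0$, in which $A$ collapses to the single point $\{o\}$, still legitimately a closed ball of radius zero.
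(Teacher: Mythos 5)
Your proof is correct and follows essentially the same route as the paper: the necessity is the direct computation $h_A(x)=\langle rx,x\rangle=r$, and the sufficiency applies Proposition \ref{cb-elements} together with the identity $\bigvee_{x\in S^{n-1}}\langle t,x\rangle=||t||$ to identify $A$ as the ball of radius $c$. The only (cosmetic) difference is that you derive $c\geq 0$ from sublinearity, whereas the paper reads it off from the nonemptiness of $A$ via the membership equivalence; both are fine, and your explicit treatment of the case $t=o$ is slightly more careful than the paper's.
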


\begin{proof}
Suppose that $A$ is a closed ball of radius $\lam$ centered at the origin. Then
\begin{equation} \label{hA=ga}
h_A(x)=\bv_{a\in A}\langle a,x\rangle=\langle\lam x,x\rangle=\lam
\end{equation}
for all $x\in S^{n-1}$. Conversely, if $h_A(x)=\lam$ for all $x\in S^{n-1}$, then it follows from Proposition \ref{cb-elements} that
$$t\in A\iff\forall x\in S^{n-1}:\ \langle t,x\rangle\leq\lam\iff||t||=\left\langle t,\dfrac{t}{||t||}\right\rangle\leq\lam,$$
which also guarantees the nonnegativity of $\lam$.
\end{proof}

Recall that the \emph{Minkowski sum} \cite{Schneider2013} of convex bodies $B,C\in\CC^n$ is given by
$$B+C:=\{b+c\mid b\in B,c\in C\}.$$
Note that a direct computation
$$h_{B+C}(x)=\bv_{b\in B,c\in C}\langle b+c,x\rangle=\bv_{b\in B,c\in C}(\langle b,x\rangle+\langle c,x\rangle)=\bv_{b\in B}\langle b,x\rangle+\bv_{c\in C}\langle c,x\rangle=h_B(x)+h_C(x)$$
for any $x\in S^{n-1}$ shows that $h_{B+C}=h_B+h_C$ (cf. \cite[Theorem 1.7.5]{Schneider2013}), in combination with Theorem \ref{cb-supp} we obtain:

\begin{prop} \label{cb-sum-supp}
For convex bodies $A,B,C\in\CC^n$, $A=B+C$ if, and only if, $h_A=h_B+h_C$.
\end{prop}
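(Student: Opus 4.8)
The plan is to reduce the statement entirely to the bijection between convex bodies and their support functions that was set up in Theorem \ref{cb-supp}, together with the Minkowski-sum identity displayed immediately before the proposition. The forward implication requires no new work: that computation shows $h_{B+C}=h_B+h_C$ for all $B,C\in\CC^n$, so if $A=B+C$ then $h_A=h_{B+C}=h_B+h_C$ at once.

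For the converse I would first record that $B+C$ is itself a convex body, so that $h_{B+C}$ is a genuine support function. Compactness follows because $B+C$ is the image of the compact set $B\times C$ under the continuous addition map $\bbR^n\times\bbR^n\to\bbR^n$, and convexity is a one-line check: a convex combination of $b_1+c_1$ and $b_2+c_2$ regroups as a convex combination of $b_1,b_2$ in $B$ plus a convex combination of $c_1,c_2$ in $C$. Hence $B+C\in\CC^n$ and, by the same computation, $h_{B+C}=h_B+h_C$.

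Now suppose $h_A=h_B+h_C$. Combining this with $h_{B+C}=h_B+h_C$ yields $h_A=h_{B+C}$, so $A$ and $B+C$ are two convex bodies with the same support function. The desired equality $A=B+C$ then follows from the injectivity of the assignment $A\mapsto h_A$ on $\CC^n$, which is precisely the uniqueness clause of Theorem \ref{cb-supp}: its proof establishes $A_{h_A}=A$ for every convex body $A$, whence $h_A=h_{B+C}$ forces $A=A_{h_A}=A_{h_{B+C}}=B+C$.

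The main---and essentially only---obstacle here is conceptual rather than technical: one must recognize that all the genuine content lives in Theorem \ref{cb-supp}, specifically in the recovery formula $A=A_{h_A}$ that makes the support-function assignment injective. Once that is in hand, everything reduces to the identity $h_{B+C}=h_B+h_C$ already displayed before the statement, plus the routine verification that $B+C\in\CC^n$.
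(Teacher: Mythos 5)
Your proposal is correct and follows essentially the same route as the paper: the displayed computation gives $h_{B+C}=h_B+h_C$, and the converse is exactly the appeal to the uniqueness clause of Theorem \ref{cb-supp} (i.e.\ $A_{h_A}=A$), with your verification that $B+C\in\CC^n$ being a routine point the paper leaves implicit.
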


\subsection{Fuzzy vectors via convex bodies and support functions} \label{Vec-CB}

Following the terminology of \cite{Maeda2008}, by a \emph{fuzzy vector} we mean a fuzzy subset of $\bbR^n$, i.e., a function 
$$u:\bbR^n\to[0,1],$$
subject to the following requirements:
\begin{enumerate}[label=(V\arabic*)]
\item \label{vec:reg} $u$ is \emph{regular}, i.e., there exists $t_0\in\bbR^n$ with $u(t_0)=1$;
\item \label{vec:comp} $u$ is \emph{compactly supported}, i.e., the closure of $\{t\in\bbR^n\mid u(t)>0\}$ is compact;
\item \label{vec:conv} $u$ is \emph{quasi-concave}, i.e., $u(s)\wedge u(t)\leq u(\lam s+(1-\lam)t)$ for all $s,t\in\bbR^n$ and $\lam\in[0,1]$;
\item \label{vec:usc} $u$ is \emph{upper semi-continuous}, i.e., $\{t\in\bbR^n\mid u(t)\geq\al\}$ is closed for all $\al\in[0,1]$.
\end{enumerate}

The set of all fuzzy vectors of dimension $n$ is denoted by $\CF^n$, and a canonical embedding of $\bbR^n$ into $\CF^n$ assigns to each $a\in\bbR^n$ a ``crisp'' fuzzy vector
$$\tilde{a}:\bbR^n\to[0,1],\quad\tilde{a}(t):=\begin{cases}
1 & \text{if}\ t=a,\\
0 & \text{else}.
\end{cases}$$

\begin{rem}
The conditions \ref{vec:reg}--\ref{vec:usc}, first appeared in \cite{Kaleva1985} and \cite{Puri1985}, are originated from the definition of \emph{fuzzy numbers}, i.e., fuzzy vectors of dimension $1$ (see \cite{Dubois1978,Dubois1982,Dubois1982a,Dubois1982b,Goetschel1983}); so, fuzzy vectors are also called $n$-dimensional fuzzy numbers (see \cite{Zhang2001,Zhang2002b,Wang2002,Wang2007}). It should be reminded that \emph{$n$-dimensional fuzzy vectors} defined in \cite{Wang2007} are different from our fuzzy vectors here.
\end{rem}

For each $u\in\CF^n$ and $\al\in[0,1]$, the \emph{$\al$-level sets} of $u$ are defined as
\renewcommand\arraystretch{1.3}
$$u_{\al}:=\left\{\begin{array}{ll}
\{t\in\bbR^n\mid u(t)\geq\al\} & \text{if}\ \al\in(0,1],\\
\overline{\bigcup\limits_{\al\in(0,1]}u_{\al}}=\overline{\{t\in\bbR^n\mid u(t)>0\}} & \text{if}\ \al=0.
\end{array}
\right.$$
It is easy to see that
\begin{equation} \label{vec-level}
u(t)=\bv_{t\in u_{\al}}\al
\end{equation}
for each $u\in\CF^n$ and $t\in\bbR^n$.

\begin{rem} \label{al-empty-sup}
Since $\al$ ranges in the closed interval $[0,1]$, the supremum in Equation \eqref{vec-level} is computed in $[0,1]$. Hence, if $t\not\in u_{\al}$ for all $\al\in[0,1]$, i.e.,
$$\{\al\mid t\in u_{\al}\}=\varnothing,$$
then $u(t)=0$ because $0$, as the bottom element of $[0,1]$, is the supremum of the empty subset of $[0,1]$.
\end{rem}

It is well known that fuzzy vectors can be characterized via convex bodies as follows, whose proof is straightforward and will be omitted here:

\begin{thm} \label{vec-cb} (See \cite{Negoita1975,Kaleva1987}.)
Let $\{A_{\al}\mid\al\in[0,1]\}$ be a family of subsets of $\bbR^n$. Then there exists a (unique) fuzzy vector
$$u:\bbR^n\to[0,1],\quad u(t)=\bv_{t\in A_{\al}}\al$$
such that
$$u_{\al}=A_{\al}$$
for all $\al\in[0,1]$ if, and only if,
\begin{enumerate}[label={\rm(L\arabic*)}]
\item \label{vec-cb:convex} $A_{\al}$ is a convex body in $\bbR^n$ for each $\al\in[0,1]$;
\item \label{vec-cb:monotone} $A_{\al}\supseteq A_{\be}$ whenever $0\leq\al<\be\leq 1$;
\item \label{vec-cb:cont} $A_{\al_0}=\bigcap\limits_{k\geq 1}A_{\al_k}$ for each increasing sequence $\{\al_k\}\subseteq[0,1]$ that converges to $\al_0>0$;
\item \label{vec-cb:0} $A_0=\overline{\bigcup\limits_{\al\in(0,1]}A_{\al}}$.
\end{enumerate}
\end{thm}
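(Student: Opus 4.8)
The plan is to prove the two implications separately, treating the displayed formula $u(t)=\bv_{t\in A_\al}\al$ as the definition that simultaneously delivers existence and uniqueness once the level-set identity $u_\al=A_\al$ is in hand. Both directions reduce to routine translations between the pointwise conditions (V1)--(V4) on $u$ and the level-wise conditions (L1)--(L4) on the family $\{A_\al\}$; the only genuinely delicate point is a non-attained-supremum phenomenon in the sufficiency direction, which is exactly what (L3) is designed to control.

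For \textbf{necessity}, I assume $u\in\CF^n$ and set $A_\al:=u_\al$, then verify each of (L1)--(L4) in turn. For (L1), nonemptiness of $u_\al$ for $\al\in(0,1]$ follows from regularity (V1); closedness from upper semicontinuity (V4); boundedness from compact support (V2) together with the inclusion $u_\al\subseteq u_0$; and convexity from quasi-concavity (V3), since $u(s),u(t)\geq\al$ forces $u(\lam s+(1-\lam)t)\geq u(s)\wedge u(t)\geq\al$. The case $\al=0$ is handled by noting that $u_0$ is the closure of the convex nested union $\bigcup_{\al>0}u_\al$ and is compact by (V2). Condition (L2) is immediate from the definition of level sets, (L4) holds by the very definition of $u_0$, and (L3) follows because $t\in\bigcap_k u_{\al_k}$ means $u(t)\geq\al_k$ for every $k$, hence $u(t)\geq\sup_k\al_k=\al_0$, i.e. $t\in u_{\al_0}$; the reverse inclusion is monotonicity.

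For \textbf{sufficiency}, I assume (L1)--(L4), define $u$ by the displayed formula, and first establish $u_\al=A_\al$ for all $\al\in[0,1]$; the fuzzy-vector axioms then transfer back mechanically. The inclusion $A_\al\subseteq u_\al$ for $\al\in(0,1]$ is trivial, since $t\in A_\al$ puts $\al$ into the index set of the supremum. The reverse inclusion $u_\al\subseteq A_\al$ is \textbf{the main obstacle}: here $t\in u_\al$ only yields $\sup\{\be\mid t\in A_\be\}\geq\al$, and this supremum need not be attained. I would split into two cases. If some $\be\geq\al$ has $t\in A_\be$, then monotonicity (L2) gives $A_\be\subseteq A_\al$, so $t\in A_\al$. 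Otherwise every such $\be$ satisfies $\be<\al$, so the supremum equals $\al$ and is realised by an increasing sequence $\be_k\uparrow\al$ with $t\in A_{\be_k}$, whereupon (L3) gives $t\in\bigcap_k A_{\be_k}=A_\al$. For $\al=0$ I would observe that $u(t)>0$ iff $t\in A_\be$ for some $\be>0$, so $u_0=\overline{\bigcup_{\be>0}A_\be}=A_0$ by (L4).

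With $u_\al=A_\al$ secured, the remaining verifications are short: (V1) from nonemptiness of $A_1$, (V2) from compactness of $A_0=u_0$, (V4) from closedness of each $A_\al$, and (V3) from convexity of the $A_\al$ read off the level sets. Uniqueness is automatic, since any fuzzy vector with these level sets must satisfy $u(t)=\bv_{t\in u_\al}\al=\bv_{t\in A_\al}\al$, recovering the displayed formula.
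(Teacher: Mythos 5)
The paper states this theorem without proof (``whose proof is straightforward and will be omitted here''), deferring to the cited references, so there is no in-paper argument to compare against. Your proof is correct and is the standard one: both directions check out, and you correctly identify and handle the one delicate point, namely that in the inclusion $u_\al\subseteq A_\al$ the supremum $\bv_{t\in A_\be}\be$ need not be attained, which is exactly where \ref{vec-cb:cont} enters via an increasing sequence $\be_k\uparrow\al$.
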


Since all the level sets of a fuzzy vector are convex bodies, it makes sense to define the \emph{support function} \cite{Puri1985,Diamond1989} of $u\in\CF^n$ as
$$h_u:[0,1]\times S^{n-1}\to\bbR,\quad h_u(\al,x):=\bv_{t\in u_{\al}}\langle t,x\rangle;$$
that is, $h_u(\al,-):=h_{u_{\al}}$ is the support function of the convex body $u_{\al}$ for each $\al\in[0,1]$. In particular, $h_u$ is bounded on $[0,1]\times S^{n-1}$, because
$$h_u(\al,x)\leq\bv_{t\in u_0}\langle t,x\rangle=h_{u_0}(x)$$
for all $\al\in[0,1]$, $x\in S^{n-1}$, and $h_{u_0}$ is bounded on $S^{n-1}$.

With Theorems \ref{cb-supp} and \ref{vec-cb} we may describe fuzzy vectors through support functions as the following representation theorem reveals, which is the main result of this paper:

\begin{thm} \label{vec-supp}
A function $h:[0,1]\times S^{n-1}\to\bbR$ is the support function of a (unique) fuzzy vector
$$u:\bbR^n\to[0,1]$$
given by
$$u(t)=\bv_{t\in A_{h(\al,-)}}\al=\bv\{\al\mid\forall x\in S^{n-1}:\ \langle t,x\rangle\leq h(\al,x)\}$$
if, and only if,
\begin{enumerate}[label={\rm(VS\arabic*)}]
\item \label{vec-supp:convex} $h(\al,-):S^{n-1}\to\bbR$ is sublinear for each $\al\in[0,1]$, i.e.,
    $$h(\al,x)+h(\al,-x)\geq 0$$
    and
    $$h\left(\al,\dfrac{\lam x+(1-\lam)y}{||\lam x+(1-\lam)y||}\right)\leq\dfrac{\lam h(\al,x)+(1-\lam)h(\al,y)}{||\lam x+(1-\lam)y||}$$
    for all $\al\in[0,1]$, $x,y\in S^{n-1}$, $\lam\in[0,1]$ with $\lam x+(1-\lam)y\neq 0$,
\item \label{vec-supp:x} $h(-,x):[0,1]\to\bbR$ is non-increasing, left-continuous on $(0,1]$ and right-continuous at $0$ for each $x\in S^{n-1}$.
\end{enumerate}
\end{thm}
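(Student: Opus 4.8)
The plan is to prove both directions by reducing to the two bridging results already at hand: Theorem \ref{cb-supp} governs the ``horizontal'' variable $x\in S^{n-1}$ one level at a time, while Theorem \ref{vec-cb} governs the ``vertical'' variable $\al\in[0,1]$ by assembling a coherent family of convex bodies into a single fuzzy vector. Throughout I would write $A_\al:=A_{h(\al,-)}$ for the convex body reconstructed from the slice $h(\al,-)$, so that $A_\al$ is exactly the intersection of halfspaces \eqref{Ah-intersection} determined by that slice.

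For necessity, suppose $h=h_u$ for a fuzzy vector $u$, so that $h(\al,-)=h_{u_\al}$ for each $\al$. Condition \ref{vec-supp:convex} is then immediate: each $h(\al,-)$ is the support function of the convex body $u_\al$, hence sublinear by the necessity half of Theorem \ref{cb-supp}. For \ref{vec-supp:x} I would treat monotonicity and the two one-sided continuities separately. Non-increasingness follows from condition \ref{vec-cb:monotone}: since $u_\al\supseteq u_\be$ for $\al<\be$, passing to suprema gives $h(\al,x)\geq h(\be,x)$. Right-continuity at $0$ uses condition \ref{vec-cb:0}: for $\al_k\downarrow 0$ the nesting of level sets gives $\overline{\bigcup_k u_{\al_k}}=\overline{\bigcup_{\al>0}u_\al}=u_0$, and since the linear functional $\langle-,x\rangle$ is continuous the support function is unaffected by closure, so $h(0,x)=\bv_{t\in u_0}\langle t,x\rangle=\sup_k\bv_{t\in u_{\al_k}}\langle t,x\rangle=\sup_k h(\al_k,x)$, which by monotonicity is precisely the limit from the right.

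The delicate point of necessity is left-continuity on $(0,1]$, which I would extract from condition \ref{vec-cb:cont}. Given $\al_k\uparrow\al_0>0$, that condition reads $u_{\al_0}=\bigcap_k u_{\al_k}$, a decreasing intersection of convex bodies. Monotonicity already gives $h(\al_0,x)\leq\inf_k h(\al_k,x)$, and I would establish the reverse inequality by a compactness argument: choosing $a_k\in u_{\al_k}$ with $\langle a_k,x\rangle=h(\al_k,x)$, all the $a_k$ lie in the compact set $u_{\al_1}$, so a subsequence converges to some $a$; since the $u_{\al_k}$ are nested and closed, $a\in\bigcap_k u_{\al_k}=u_{\al_0}$, whence $h(\al_0,x)\geq\langle a,x\rangle=\inf_k h(\al_k,x)$. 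This is the one place where compactness of the level sets is genuinely exploited, and I expect it to be the main obstacle of the whole argument.

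For sufficiency, assume \ref{vec-supp:convex} and \ref{vec-supp:x}. By Theorem \ref{cb-supp} each $A_\al$ is a convex body, which is condition \ref{vec-cb:convex}. Condition \ref{vec-cb:monotone} follows from non-increasingness via Proposition \ref{cb-elements}: if $\al<\be$ and $t\in A_\be$, then $\langle t,x\rangle\leq h(\be,x)\leq h(\al,x)$ for all $x$, so $t\in A_\al$. Condition \ref{vec-cb:cont} follows directly from left-continuity \emph{without} invoking compactness, using the description \eqref{Ah-intersection}: for $\al_k\uparrow\al_0>0$, a point $t$ lies in $\bigcap_k A_{\al_k}$ iff $\langle t,x\rangle\leq h(\al_k,x)$ for all $k$ and all $x$, iff $\langle t,x\rangle\leq\inf_k h(\al_k,x)=h(\al_0,x)$ for all $x$, iff $t\in A_{\al_0}$. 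For condition \ref{vec-cb:0} I would observe that $\overline{\bigcup_{\al>0}A_\al}$ is a convex body contained in $A_0$, and compute its support function as $\sup_{\al>0}h_{A_\al}(x)=\sup_{\al>0}h(\al,x)=h(0,x)=h_{A_0}(x)$, where the middle equality is monotonicity together with right-continuity at $0$; by the uniqueness half of Theorem \ref{cb-supp} the two bodies coincide. Theorem \ref{vec-cb} then produces the unique fuzzy vector $u$ with $u_\al=A_\al$, and $h_u(\al,-)=h_{A_\al}=h(\al,-)$ gives $h_u=h$, completing the proof.
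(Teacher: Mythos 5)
Your proof is correct, and it shares the paper's overall architecture -- handle the $x$-variable slice-wise via Theorem \ref{cb-supp} and assemble the family $\{A_{h(\al,-)}\}$ via Theorem \ref{vec-cb} -- with identical treatments of \ref{vec-supp:convex}, of monotonicity, and of condition \ref{vec-cb:cont} in the sufficiency direction. However, you execute the three genuinely delicate steps by different arguments. For left-continuity you pick maximizers $a_k\in u_{\al_k}$ of $\langle -,x\rangle$ and extract a convergent subsequence landing in $\bigcap_k u_{\al_k}=u_{\al_0}$; the paper instead proves the set inclusion $u_{\al_k}\subseteq u_{\al_0}+B_\ep$ by contradiction and then compares support functions via Propositions \ref{cb-ball} and \ref{cb-sum-supp}. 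For right-continuity at $0$ you use the invariance of $\sup_{t}\langle t,x\rangle$ under closure to interchange suprema; the paper runs a finite-subcover argument to get $u_0\subseteq u_{\al_q}+B_\ep$. For condition \ref{vec-cb:0} in sufficiency you compute the support function of $A=\overline{\bigcup_{\al>0}A_{h(\al,-)}}$ as $\sup_{\al>0}h(\al,x)=h(0,x)$ and invoke the uniqueness clause of Theorem \ref{cb-supp} to conclude $A=A_{h(0,-)}$; the paper argues by contradiction with a separating hyperplane through the metric projection (Lemma \ref{convex-body-support-hyperplane-not-in}). Your versions are shorter and stay entirely pointwise in $x$, avoiding the Minkowski-sum and metric-projection machinery at these steps; the paper's inclusions $u_{\al_k}\subseteq u_{\al_0}+B_\ep$ and $u_0\subseteq u_{\al_q}+B_\ep$ are stronger, uniform-in-$x$ (Hausdorff-distance-type) statements, but that extra uniformity is not needed for the theorem. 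All the auxiliary facts you rely on (attainment of the supremum on a compact level set, closedness and boundedness of the union's closure so that the uniqueness clause applies) do hold, so there is no gap.
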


\begin{proof}
{\bf Necessity.} Let $u\in\CF^n$ be a fuzzy vector. Then $h_u$ clearly satisfies \ref{vec-supp:convex} by Theorem \ref{cb-supp}. For \ref{vec-supp:x}, let us fix $x\in S^{n-1}$. Then $h_u(-,x):[0,1]\to\bbR$ is non-increasing because of \ref{vec-cb:monotone}.

To see that $h_u(-,x)$ is left-continuous at each $\al_0\in(0,1]$, let $\{\al_k\}\subseteq(0,1]$ be an increasing sequence that converges to $\al_0$. Then $u_{\al_0}=\bigcap\limits_{k\geq 1}u_{\al_k}$ by \ref{vec-cb:cont}. For each $\ep>0$, we claim that there exists a positive integer $k$ such that for all $t\in u_{\al_k}$, there exists $r_t\in u_{\al_0}$ with 
\begin{equation} \label{rt-def}
||t-r_t||<\ep.
\end{equation} 
Indeed, suppose that we find an $\ep_0>0$ such that for all positive integers $k$, there exists $t_k\in u_{\al_k}$ such that
$$d(t_k,u_{\al_0}):=\bw_{r\in u_{\al_0}}||t_k-r||\geq\ep_0.$$
Then the sequence $\{t_k\}$ is contained in the compact set $u_{\al_1}$, and thus it has a convergent subsequence. Without loss of generality we may suppose that $\lim\limits_{k\ra\infty}t_k=t_0$. Then $t_0\in u_{\al_k}$ for all $k\geq 1$ because $\{t_m\mid m\geq k\}\subseteq u_{\al_k}$, and consequently $t_0\in \bigcap\limits_{k\geq 1}u_{\al_k}=u_{\al_0}$. But the construction of $t_k$ forces $d(t_0,u_{\al_0})\geq\ep_0$, which is a contradiction.

Note that \eqref{rt-def} actually means that
$$u_{\al_k}\subseteq u_{\al_0}+B_{\ep},$$
where $B_{\ep}$ refers to the closed ball of radius $\ep$ centered at the origin, since each $t\in u_{\al_k}$ may be written as
$$t=r_t+(t-r_t)$$
with $r_t\in u_{\al_0}$ and $t-r_t\in B_{\ep}$. Then it follows from Propositions \ref{cb-ball} and \ref{cb-sum-supp} that
$$h_u(\al_k,x)=h_{u_{\al_k}}(x)\leq h_{u_{\al_0}}(x)+h_{B_{\ep}}(x)=h_u(\al_0,x)+\ep.$$
Hence, together with the monotonicity of $h_u(-,x):[0,1]\to\bbR$ we conclude that $\lim\limits_{k\ra\infty}h_u(\al_k,x)=h_u(\al_0,x)$, which proves the left-continuity of $h_u(-,x)$ at $\al_0$.

To see that $h_u(-,x)$ is right-continuous at $0$, let $\ep>0$. The compactness of $u_0$ allows us to find $q_1,\dots,q_k\in u_0$ such that $u_0$ is covered by finitely many open balls
$$B\left(q_1,\dfrac{\ep}{2}\right),\dots,B\left(q_k,\dfrac{\ep}{2}\right)$$
centered at $q_1,\dots,q_k$, respectively, with radii $\dfrac{\ep}{2}$.

Note that for each $t\in u_0=\overline{\bigcup\limits_{\al\in(0,1]}u_{\al}}$, there exists $\al_t\in(0,1]$ and $s_t\in u_{\al_t}$ such that $||t-s_t||<\dfrac{\ep}{2}$. Let
$$\al_q:=\min\{\al_{q_1},\dots,\al_{q_k}\}>0,$$
and let $B\left(q_t,\dfrac{\ep}{2}\right)$ $(q_t\in\{q_1,\dots,q_k\})$ be the open ball containing $t$. Then $s_{q_t}\in u_{\al_{q_t}}\subseteq u_{\al_q}$, and
$$||t-s_{q_t}||\leq ||t-q_t||+||q_t-s_{q_t}||<\dfrac{\ep}{2}+\dfrac{\ep}{2}=\ep.$$
It follows that
$$u_0\subseteq u_{\al_q}+B_{\ep},$$
because $t=s_{q_t}+(t-s_{q_t})$ for all $t\in u_0$, where $s_{q_t}\in u_{\al_q}$ and $t-s_{q_t}\in B_{\ep}$. By Propositions \ref{cb-ball} and \ref{cb-sum-supp} this means that
$$h_u(0,x)=h_{u_0}(x)\leq h_{u_{\al_q}}(x)+h_{B_{\ep}}(x)=h_u(\al_q,x)+\ep.$$
Hence, together with the monotonicity of $h_u(-,x):[0,1]\to\bbR$ we conclude that $\lim\limits_{\al\ra 0+}h_u(\al,x)=h_u(0,x)$, which proves the right-continuity of $h_u(-,x)$ at $0$.

{\bf Sufficiency.} It suffices to show that $\{A_{h(\al,-)}\mid\al\in[0,1]\}$ satisfies the conditions \ref{vec-cb:convex}--\ref{vec-cb:0}.

Firstly, \ref{vec-cb:convex} is a direct consequence of Theorem \ref{cb-supp}.

Secondly, \ref{vec-cb:monotone} holds since $h(-,x)$ is non-increasing for all $x\in S^{n-1}$.

Thirdly, for \ref{vec-cb:cont}, let $\{\al_k\}\subseteq(0,1]$ be an increasing sequence that converges to $\al_0\in(0,1]$. It remains to show that
$$\bigcap\limits_{k\geq 1}A_{h(\al_k,-)}\subseteq A_{h(\al_0,-)},$$
since the reverse inclusion is trivial by \ref{vec-cb:monotone}. Suppose that $t\in A_{h(\al_k,-)}$ for all $k\geq 1$. Then, by Proposition \ref{cb-elements}, $\langle t,x\rangle\leq h(\al_k,x)$ for all $x\in S^{n-1}$. Thus the left-continuity of $h(-,x)$ at $\al_0$ implies that
$$\langle t,x\rangle\leq\lim\limits_{k\ra\infty}h(\al_k,x)=h\left(\lim\limits_{k\ra\infty}\al_k,x\right)=h(\al_0,x),$$
and consequently $t\in A_{h(\al_0,-)}$.

Finally, we prove \ref{vec-cb:0} by showing that
$$A_{h(0,-)}\subseteq A:=\overline{\bigcup\limits_{\al\in(0,1]}A_{h(\al,-)}}$$
as the reverse inclusion is trivial by \ref{vec-cb:monotone}. We proceed by contradiction. Suppose that $t_0\in A_{h(0,-)}$ but $t_0\not\in A$. Note that $A$ is also a convex body, since $A\subseteq A_{h(0,-)}$ and $A$ is the closure of the union of a family of convex bodies linearly ordered by inclusion. Thus, by Lemma \ref{convex-body-support-hyperplane-not-in} we may find a hyperplane $H$ that supports $A$ at $p_A(t_0)$, with
$$x_0:=\dfrac{t_0-p_A(t_0)}{||t_0-p_A(t_0)||}\in S^{n-1}$$
as its exterior normal vector, i.e.,
$$H=\{t\in\bbR^n\mid\langle t,x_0\rangle=\langle p_A(t_0),x_0\rangle\},$$  
which necessarily satisfies $t_0\in\bbR^n\setminus H^-$ and $A\subseteq H^-$. It follows that
\begin{align*}
h(\al,x_0)&=\bv_{t\in A_{h(\al,-)}}\langle t,x_0\rangle&(\text{Theorem \ref{cb-supp}})\\
&\leq\langle p_A(t_0),x_0\rangle&(A_{h(\al,-)}\subseteq A\subseteq H^-)\\
&=\langle t_0,x_0\rangle-\langle t_0-p_A(t_0),x_0\rangle\\
&=\langle t_0,x_0\rangle-||t_0-p_A(t_0)||&\left(x_0=\dfrac{t_0-p_A(t_0)}{||t_0-p_A(t_0)||}\right)\\
&\leq h(0,x_0)-||t_0-p_A(t_0)||&(t_0\in A_{h(0,-)})
\end{align*}
for all $\al\in(0,1]$, which contradicts to the right-continuity of $h(-,x_0)$ at $0$. The proof is thus completed.
\end{proof}

\section{Mare{\v s} cores of fuzzy vectors} \label{Mares}

\subsection{Addition of fuzzy vectors} \label{Vec-Add}
With the results of Section \ref{Representation} we are now able to characterize the addition $\oplus$ of fuzzy vectors through their level sets and support functions. Explicitly, the sum
$$u\oplus v\in\CF^n$$
of fuzzy vectors $u,v\in\CF^n$ is defined by Zadeh's extension principle (cf. \cite{Zadeh1975,Dubois1978}), i.e.,
$$(u\oplus v)(t):=\bv_{r+s=t} u(r)\wedge v(s)$$
for all $t\in\bbR^n$.


\begin{thm} \label{vec-sum-supp}
For fuzzy vectors $u,v,w\in\CF^n$, the following statements are equivalent:
\begin{enumerate}[label={\rm(\roman*)}]
\item \label{vec-sum-supp:vec} $u=v\oplus w$.
\item \label{vec-sum-supp:level} $u_{\al}=v_{\al}+w_{\al}$ for all $\al\in[0,1]$.
\item \label{vec-sum-supp:supp} $h_u=h_v+h_w$.
\end{enumerate}
\end{thm}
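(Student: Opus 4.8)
The plan is to route everything through the levelwise picture, since the equivalence \ref{vec-sum-supp:level}$\iff$\ref{vec-sum-supp:supp} is essentially automatic. Writing \ref{vec-sum-supp:supp} out pointwise, $h_u=h_v+h_w$ says $h_{u_\al}=h_{v_\al}+h_{w_\al}$ for every $\al\in[0,1]$, because $h_u(\al,-)=h_{u_\al}$ by definition; applying Proposition \ref{cb-sum-supp} to the convex bodies $u_\al,v_\al,w_\al$ at each fixed $\al$ converts this into $u_\al=v_\al+w_\al$ for all $\al$, which is exactly \ref{vec-sum-supp:level}. It therefore remains to prove \ref{vec-sum-supp:vec}$\iff$\ref{vec-sum-supp:level}, and the heart of the matter is the single identity
\[
(v\oplus w)_\al=v_\al+w_\al\qquad(\al\in[0,1]),
\]
where the left-hand side is the literal level set $\{t\mid(v\oplus w)(t)\geq\al\}$ of the function $v\oplus w$.

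First I would treat $\al\in(0,1]$. The inclusion $v_\al+w_\al\subseteq(v\oplus w)_\al$ is immediate: if $t=r+s$ with $r\in v_\al$ and $s\in w_\al$, then $(v\oplus w)(t)\geq v(r)\wedge w(s)\geq\al$ straight from the definition of $\oplus$. The reverse inclusion is the crux, and is where \ref{vec:comp} and \ref{vec:usc} enter. Assume $(v\oplus w)(t)=\bv_{r+s=t}v(r)\wedge w(s)\geq\al$; since this supremum need not be attained, I would fix an increasing sequence $0<\be_k\uparrow\al$ with $\be_k<\al$ and, for each $k$, choose a decomposition $t=r_k+s_k$ with $v(r_k)>\be_k$ and $w(s_k)>\be_k$. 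All the $r_k$ then lie in the compact set $v_{\be_1}$ (the level sets of a fuzzy vector are convex bodies by Theorem \ref{vec-cb}), so after passing to a subsequence $r_k\to r_0$ and $s_k=t-r_k\to s_0:=t-r_0$. For each fixed $m$ one has $r_k\in v_{\be_m}$ for all $k\geq m$, and $v_{\be_m}$ is closed by \ref{vec:usc}; hence $r_0\in v_{\be_m}$, i.e. $v(r_0)\geq\be_m$, for every $m$, which forces $v(r_0)\geq\al$. Symmetrically $w(s_0)\geq\al$, so $t=r_0+s_0\in v_\al+w_\al$. This compactness-plus-closedness mechanism is exactly the one already deployed for the continuity arguments in the proof of Theorem \ref{vec-supp}, and it is the step I expect to be the main obstacle.

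For $\al=0$ I would derive the identity from the case $\al\in(0,1]$ together with the closure condition \ref{vec-cb:0}: by definition $(v\oplus w)_0=\overline{\bigcup_{\al\in(0,1]}(v\oplus w)_\al}=\overline{\bigcup_{\al\in(0,1]}(v_\al+w_\al)}$. Each $v_\al+w_\al$ sits inside the compact set $v_0+w_0$, so this closure is contained in $v_0+w_0$; conversely, given $r\in v_0$ and $s\in w_0$ I would approximate them by $r_k\in v_{\al_k}$ and $s_k\in w_{\be_k}$ (possible since $v_0=\overline{\bigcup_\al v_\al}$, and likewise for $w$) and pass to the common level $\ga_k:=\al_k\wedge\be_k>0$, so that $r_k+s_k\in v_{\ga_k}+w_{\ga_k}$ converges to $r+s$. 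This yields the level identity for every $\al\in[0,1]$. Then \ref{vec-sum-supp:vec}$\Rightarrow$\ref{vec-sum-supp:level} is immediate, while for \ref{vec-sum-supp:level}$\Rightarrow$\ref{vec-sum-supp:vec} I would recover the functions from their level sets: by \eqref{vec-level}, $u(t)=\bv\{\al\mid t\in u_\al\}$, and the same tautology gives $(v\oplus w)(t)=\bv\{\al\mid t\in(v\oplus w)_\al\}$, so the level identity forces $u=v\oplus w$. Along the way the family $\{v_\al+w_\al\}$ can be checked against \ref{vec-cb:convex}--\ref{vec-cb:0}, so that Theorem \ref{vec-cb} simultaneously confirms $v\oplus w\in\CF^n$.
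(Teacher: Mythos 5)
Your proposal is correct and follows essentially the same route as the paper's own proof: the equivalence of (ii) and (iii) is dispatched levelwise via Proposition \ref{cb-sum-supp}, and the equivalence of (i) and (ii) rests on the identity $(v\oplus w)_{\al}=v_{\al}+w_{\al}$, which you establish by the same compactness-plus-upper-semi-continuity mechanism (you extract a convergent subsequence of decompositions along $\be_k\uparrow\al$ where the paper uses the levels $(1-1/2^k)\al$). Your explicit, separate treatment of the case $\al=0$ via the closure condition is a small point of extra care that the paper leaves implicit.
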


\begin{proof}
\ref{vec-sum-supp:level}$\iff$\ref{vec-sum-supp:supp} is an immediate consequence of Proposition \ref{cb-sum-supp}. For \ref{vec-sum-supp:vec}$\iff$\ref{vec-sum-supp:level}, by Theorem \ref{vec-cb} it suffices to observe that 
\begin{equation} \label{v-oplus-w}
(v\oplus w)_{\al}=v_{\al}+w_{\al}
\end{equation} 
for all $\al\in[0,1]$, which is a well-known fact of Zadeh's extension principle (see, e.g., \cite[Proposition 3.3]{Nguyen1978}). For the sake of self-containment we give a proof of \eqref{v-oplus-w} here. On one hand, $v_{\al}+w_{\al}\subseteq(v\oplus w)_{\al}$. Let $r\in v_{\al}$, $s\in w_{\al}$. Then $v(r)\geq\al$ and $w(s)\geq\al$, and consequently
$$(v\oplus w)(r+s)\geq v(r)\wedge w(s)\geq\al,$$
showing that $r+s\in(v\oplus w)_{\al}$. On the other hand, in order to verify the reverse inclusion, let $t\in(v\oplus w)_{\al}$. Then
$$(v\oplus w)(t)=\bv_{r+s=t} v(r)\wedge w(s)\geq\al.$$
Consequently, there exists a sequence $\{r_k\}$ in $\bbR^n$ such that
$$v(r_k)\wedge w(t-r_k)\geq\left(1-\dfrac{1}{2^k}\right)\al\geq\dfrac{\al}{2}$$
for any positive integer $k$. In particular, this means that $\{r_k\}\subseteq u_{\frac{\al}{2}}$. Since $u_{\frac{\al}{2}}$ is compact, $\{r_k\}$ has a convergent subsequence, and without loss of generality we may assume that $\lim\limits_{k\ra\infty}r_k=r_0$. Note that for any positive integers $k,l$ with $l\geq k$, 
$$v(r_l)\wedge w(t-r_l)\geq\left(1-\dfrac{1}{2^l}\right)\al\geq\left(1-\dfrac{1}{2^k}\right)\al.$$
Letting $l\ra\infty$ in the above inequality, the upper semi-continuity of $v,w$ (see \ref{vec:usc}) then implies that
$$v(r_0)\wedge w(t-r_0)\geq\left(1-\dfrac{1}{2^k}\right)\al.$$
Since $k$ is arbitrary, the above inequality forces $v(r_0)\wedge w(t-r_0)\geq\al$. It follows that $r_0\in v_{\al}$ and $t-r_0\in w_{\al}$, and therefore $t=r_0+(t-r_0)\in v_{\al}+w_{\al}$.
\end{proof}

\subsection{Symmetric fuzzy vectors} \label{Vec-Sym}

Let $O(n)$ denote the orthogonal group of dimension $n$, i.e., the group of $n\times n$ orthogonal matrices.

\begin{defn} \label{sym-def}
A fuzzy vector $u\in\CF^n$ is \emph{symmetric (around the origin)} if it is $O(n)$-invariant; that is, if
$$u(t)=u(Qt)$$
for all $t\in\bbR^n$ and $Q\in O(n)$.
\end{defn}

We denote by $\Fs$ the set of all symmetric fuzzy vectors of dimension $n$.

\begin{rem}
In the case of $n=1$, since $O(1)=\{-1,1\}$, $u\in\CF_1$ is symmetric if $u(t)=u(-t)$ for all $t\in\bbR$; that is, $u$ is a symmetric fuzzy number in the sense of Mare{\v s} \cite{Marevs1989,Marevs1992}. Hence, the symmetry of fuzzy numbers is a special case of Definition \ref{sym-def}. In fact, as indicated by \cite[Remark 2.1]{Qiu2014}, a symmetric fuzzy number actually refers to a fuzzy number that is \emph{symmetric around zero}.
\end{rem}

\begin{thm} \label{sym-vec-supp}
For each fuzzy vector $u\in\CF^n$, the following statements are equivalent:
\begin{enumerate}[label={\rm(\roman*)}]
\item \label{sym-vec-supp:vec} $u$ is symmetric.
\item \label{sym-vec-supp:rot} For each $\al\in[0,1]$, $u_{\al}$ is invariant under the action of $O(n)$; that is, $Qt\in u_{\al}$ for all $t\in u_{\al}$ and $Q\in O(n)$.
\item \label{sym-vec-supp:level} For each $\al\in[0,1]$, $u_{\al}$ is a closed ball centered at the origin.
\item \label{sym-vec-supp:supp} For each $\al\in[0,1]$, $h_u(\al,-):S^{n-1}\to\bbR$ is a (nonnegative) constant function.
\end{enumerate}
\end{thm}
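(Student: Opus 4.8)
The plan is to prove the equivalences as a single cyclic chain of implications, \ref{sym-vec-supp:vec} $\Ra$ \ref{sym-vec-supp:rot} $\Ra$ \ref{sym-vec-supp:level} $\Ra$ \ref{sym-vec-supp:supp} $\Ra$ \ref{sym-vec-supp:vec}, rather than verifying each pairwise equivalence separately. Three of the four links are essentially formal bookkeeping, and the whole weight of the statement sits on the single geometric link \ref{sym-vec-supp:rot} $\Ra$ \ref{sym-vec-supp:level}, which I expect to be the main obstacle.

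For \ref{sym-vec-supp:vec} $\Ra$ \ref{sym-vec-supp:rot}, I would work directly on level sets. For $\al\in(0,1]$, the hypothesis $u(t)=u(Qt)$ makes the conditions $u(t)\geq\al$ and $u(Qt)\geq\al$ equivalent, so $Qu_{\al}=u_{\al}$ for every $Q\in O(n)$. The one point needing care is the bottom level: here $u_0=\overline{\bigcup_{\al\in(0,1]}u_{\al}}$ by definition, and since each $Q\in O(n)$ is a homeomorphism of $\bbR^n$ that already preserves the invariant union, it preserves its closure as well, so $u_0$ is $O(n)$-invariant too.

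The key step \ref{sym-vec-supp:rot} $\Ra$ \ref{sym-vec-supp:level} is where I would spend the real effort. Fix $\al$ and recall that $u_{\al}$ is a convex body, nonempty by regularity \ref{vec:reg}. First I would show $o\in u_{\al}$: taking any $t\in u_{\al}$ and $Q=-I\in O(n)$ gives $-t\in u_{\al}$, and convexity of $u_{\al}$ then places the midpoint $o=\tfrac12 t+\tfrac12(-t)$ in $u_{\al}$. Next, since $O(n)$ acts transitively on each sphere $\{s\in\bbR^n\mid ||s||=r\}$, invariance forces the whole sphere through any point of $u_{\al}$ to lie in $u_{\al}$, and convex combinations with the origin fill in the corresponding closed balls. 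Setting $R:=\bv_{t\in u_{\al}}||t||$, which is finite and attained by compactness, this shows $u_{\al}$ contains every closed ball of radius $r<R$ centered at $o$ and is contained in the closed ball of radius $R$; closedness of $u_{\al}$ then pins it down to exactly the closed ball of radius $R$ centered at the origin.

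The final two links are immediate consequences of Proposition \ref{cb-ball}. For \ref{sym-vec-supp:level} $\Ra$ \ref{sym-vec-supp:supp}, a level set that is a closed ball centered at the origin has constant nonnegative support function $h_u(\al,-)=h_{u_{\al}}$ by that proposition. Conversely, for \ref{sym-vec-supp:supp} $\Ra$ \ref{sym-vec-supp:vec}, the reverse direction of Proposition \ref{cb-ball} turns constancy of each $h_u(\al,-)$ back into the statement that every $u_{\al}$ is a closed ball centered at the origin, hence $O(n)$-invariant; substituting this into the level-set formula \eqref{vec-level} yields, for any $Q\in O(n)$, the identity $u(Qt)=\bv\{\al\mid Qt\in u_{\al}\}=\bv\{\al\mid t\in u_{\al}\}=u(t)$, so $u$ is symmetric and the cycle closes.
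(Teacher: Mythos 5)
Your proposal is correct, but it routes the equivalences differently from the paper. The paper establishes \ref{sym-vec-supp:level}$\iff$\ref{sym-vec-supp:supp} via Proposition \ref{cb-ball} and then closes the cycle in the opposite orientation to yours: it proves \ref{sym-vec-supp:vec}$\implies$\ref{sym-vec-supp:supp} by a support-function computation (showing $h_u(\al,Qx)=h_u(\al,x)$ for all $Q\in O(n)$ and invoking the surjectivity of $Q\mapsto Qx$ onto $S^{n-1}$), and then \ref{sym-vec-supp:level}$\implies$\ref{sym-vec-supp:rot}$\implies$\ref{sym-vec-supp:vec}, so that the implication \ref{sym-vec-supp:rot}$\implies$\ref{sym-vec-supp:level} is never argued directly but only obtained by going all the way around. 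You instead make \ref{sym-vec-supp:rot}$\implies$\ref{sym-vec-supp:level} the workhorse, arguing at the level of sets: the antipodal map $-I$ plus convexity puts the origin in $u_{\al}$, transitivity of $O(n)$ on spheres fills in whole spheres, convex combinations with $o$ fill in balls, and compactness pins down $u_{\al}$ as the closed ball of the attained radius $R=\bv_{t\in u_{\al}}||t||$. Both arguments ultimately rest on the same fact (transitivity of the $O(n)$-action on $S^{n-1}$), but yours is more elementary and self-contained in that it exhibits the level sets as balls by direct geometry, whereas the paper's version leans on the support-function calculus it has already built and is slightly shorter for it. Your remaining links are sound: the $\al=0$ case of \ref{sym-vec-supp:vec}$\implies$\ref{sym-vec-supp:rot} is correctly handled via the closure of an invariant union under the homeomorphism $Q$, and \ref{sym-vec-supp:supp}$\implies$\ref{sym-vec-supp:vec} via Proposition \ref{cb-ball} and Equation \eqref{vec-level} matches the paper's \ref{sym-vec-supp:level}$\implies$\ref{sym-vec-supp:rot}$\implies$\ref{sym-vec-supp:vec} in compressed form.
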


\begin{proof}
Since \ref{sym-vec-supp:level}$\iff$\ref{sym-vec-supp:supp} is an immediate consequence of Proposition \ref{cb-ball}, it remains to prove that \ref{sym-vec-supp:vec}$\implies$\ref{sym-vec-supp:supp} and \ref{sym-vec-supp:level}$\implies$\ref{sym-vec-supp:rot}$\implies$\ref{sym-vec-supp:vec}.

\ref{sym-vec-supp:vec}$\implies$\ref{sym-vec-supp:supp}: Let $\al\in(0,1]$ and $x\in S^{n-1}$. For each $Q\in O(n)$ and $t\in u_{\al}$, the $O(n)$-invariance of $u$ implies that $Q^{-1}t\in u_{\al}$ since $u(Q^{-1}t)=u(t)\geq\al$, and consequently
$$\langle t,Qx\rangle=\langle Q^{-1}t,x\rangle\leq h_u(\al,x).$$
Thus
$$h_u(\al,Qx)=\bv_{t\in u_{\al}}\langle t,Qx\rangle\leq h_u(\al,x).$$
Since $Q$ is arbitrary, it also holds that $h_u(\al,Q^{-1}x)\leq h_u(\al,x)$, and consequently $h_u(\al,x)\leq h_u(\al,Qx)$. Hence
$$h_u(\al,Qx)=h_u(\al,x)$$
for all $Q\in O(n)$. Note that the function
$$O(n)\to S^{n-1},\quad Q\mapsto Qx$$
is surjective, and thus
$$h_u(\al,x)=h_u(\al,y)$$
for all $x,y\in S^{n-1}$; that is, $h_u(\al,-):S^{n-1}\to\bbR$ is constant.

In this case, in order to see that the value of $h(\al,-)$ is nonnegative, just note that for any $t\in u_{\al}$ with $t\neq o$, from $\dfrac{t}{||t||}\in S^{n-1}$ we deduce that
$$h_u(\al,-)=h_u\left(\al,\dfrac{t}{||t||}\right)\geq\left\langle t,\dfrac{t}{||t||}\right\rangle=||t||\geq 0.$$

\ref{sym-vec-supp:level}$\implies$\ref{sym-vec-supp:rot}: Let $\al\in[0,1]$. If $u_{\al}$ is a closed ball of radius $\lam_{\al}$ centered at the origin, then $Qt\in u_{\al}$ whenever $t\in u_{\al}$,  since $||t||\leq\lam_{\al}$ obviously implies that $||Qt||\leq\lam_{\al}$.

\ref{sym-vec-supp:rot}$\implies$\ref{sym-vec-supp:vec}: Let $t\in\bbR^n$ and $Q\in O(n)$. If $u(t)>0$, then $t\in u_{u(t)}$, and consequently $Qt\in u_{u(t)}$, i.e., $u(Qt)\geq u(t)$. As $Q$ is arbitrary, from $u(Q^{-1}t)\geq u(t)$ we immediately deduce that $u(t)\geq u(Qt)$, and thus $u(t)=u(Qt)$.

If $u(t)=0$, then $t\not\in u_{\al}$ for all $\al\in(0,1]$, and consequently $Qt\not\in u_{\al}$ for all $\al\in(0,1]$, which forces $u(Qt)=0$ and completes the proof.
\end{proof}

%

From Theorem \ref{sym-vec-supp} we see that the support function $h_u:[0,1]\times S^{n-1}\to\bbR$ of a symmetric fuzzy vector $u\in\Fs$  actually reduces to a single-variable function
$$h_u:[0,1]\to\bbR,$$
and conversely:

\begin{cor} \label{function-sym-sup}
A function $h:[0,1]\to\bbR$ is the support function of a symmetric fuzzy vector $u\in\Fs$ if, and only if, $h$ is nonnegative, non-increasing, left-continuous on $(0,1]$ and right-continuous at $0$.
\end{cor}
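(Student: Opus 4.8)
The plan is to read this off directly from the two main representation theorems, Theorem~\ref{vec-supp} and Theorem~\ref{sym-vec-supp}, by recording that for a symmetric fuzzy vector the support function does not depend on the sphere variable. Concretely, given a single-variable function $h:[0,1]\to\bbR$, I would introduce the two-variable function $\bar h:[0,1]\times S^{n-1}\to\bbR$, $\bar h(\al,x):=h(\al)$, which is constant in $x$; the assertion is then that $h$ is the support function of a symmetric fuzzy vector exactly when $\bar h$ is the support function (in the sense of Theorem~\ref{vec-supp}) of a fuzzy vector that additionally turns out to be symmetric.

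For \textbf{necessity}, suppose $h$ is the support function of a symmetric $u\in\Fs$, so that $h(\al)=h_u(\al,x)$ for every $x\in S^{n-1}$; this is well defined precisely because $h_u(\al,-)$ is constant by Theorem~\ref{sym-vec-supp}. Nonnegativity of $h$ is exactly the nonnegativity of this constant, guaranteed by condition~\ref{sym-vec-supp:supp} of Theorem~\ref{sym-vec-supp}. The remaining three properties --- non-increasing, left-continuous on $(0,1]$, and right-continuous at $0$ --- are inherited verbatim from condition~\ref{vec-supp:x} of Theorem~\ref{vec-supp} applied to $h_u(-,x)$ for any fixed $x$.

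For \textbf{sufficiency}, assume $h$ has the four stated properties and set $\bar h(\al,x):=h(\al)$. I would verify the two hypotheses of Theorem~\ref{vec-supp}. Condition~\ref{vec-supp:x} for $\bar h(-,x)$ is immediate, since $\bar h(-,x)=h$ is the given function itself. For condition~\ref{vec-supp:convex}, I would argue that for each fixed $\al$ the constant function $\bar h(\al,-)\equiv h(\al)\ge 0$ is sublinear; the cleanest route is Proposition~\ref{cb-ball}, by which a nonnegative constant is the support function of the closed ball of that radius centered at the origin, whence it is sublinear by the necessity direction of Theorem~\ref{cb-supp}. (One could equally verify sublinearity directly, the only point being the elementary bound $\|\lam x+(1-\lam)y\|\le 1$ for $x,y\in S^{n-1}$ and $\lam\in[0,1]$.) Theorem~\ref{vec-supp} then produces a unique fuzzy vector $u$ with $h_u=\bar h$. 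Finally, since $h_u(\al,-)=h(\al)$ is a nonnegative constant for every $\al$, Theorem~\ref{sym-vec-supp} (equivalence of \ref{sym-vec-supp:vec} and \ref{sym-vec-supp:supp}) shows that $u$ is symmetric, so $h$ is the support function of $u\in\Fs$.

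There is no genuine obstacle here: the corollary is simply the specialization of Theorem~\ref{vec-supp} to the case where the $S^{n-1}$-dependence collapses to a point. The only step requiring a moment's care is checking sublinearity of a constant function in condition~\ref{vec-supp:convex}, and even that is handed to us for free by Proposition~\ref{cb-ball}; the nonnegativity hypothesis on $h$ is exactly what makes that constant a legitimate ball radius.
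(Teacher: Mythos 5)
Your proof is correct and follows exactly the route the paper intends: the paper's own proof is the one-line remark that the corollary ``follows immediately from Theorems \ref{vec-supp} and \ref{sym-vec-supp},'' and your argument simply spells out the details of that derivation, including the (correct) observation that a nonnegative constant on $S^{n-1}$ is sublinear because $\|\lam x+(1-\lam)y\|\le 1$.
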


\begin{proof}
Follows immediately from Theorems \ref{vec-supp} and \ref{sym-vec-supp}.
\end{proof}

As a direct application of Proposition \ref{cb-ball} and Theorem \ref{sym-vec-supp}, let us point out the following easy but useful facts:

\begin{cor} \label{sym-vec-sum}
Let $A,B\in\CC^n$ be convex bodies and $u,v\in\CF^n$ be fuzzy vectors.
\begin{enumerate}[label={\rm(\roman*)}]
\item \label{sym-vec-sum:ball} If $A$ and $B$ are both closed balls centered at the origin, then so is $A+B$, and
    $$\lam_{A+B}=\lam_A+\lam_B,$$
    where $\lam_A$, $\lam_B$ and $\lam_{A+B}$ are the radii of $A$, $B$ and $A+B$, respectively.
\item \label{sym-vec-sum:vec} If $u$ and $v$ are both symmetric, then so is $u\oplus v$.
\end{enumerate}
\end{cor}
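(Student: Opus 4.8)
The plan is to reduce both parts to the support-function calculus developed above, so that each claim becomes a short consequence of the relevant characterization theorems.

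For \ref{sym-vec-sum:ball} I would first invoke Proposition \ref{cb-ball} to rewrite the hypothesis: since $A$ and $B$ are closed balls centered at the origin, their support functions $h_A,h_B:S^{n-1}\to\bbR$ are the constant functions with values $\lam_A$ and $\lam_B$, respectively. By Proposition \ref{cb-sum-supp} the support function of the Minkowski sum satisfies $h_{A+B}=h_A+h_B$, which is therefore the constant function with value $\lam_A+\lam_B$. Applying the converse direction of Proposition \ref{cb-ball} to this constant support function yields that $A+B$ is a closed ball centered at the origin whose radius equals that constant; that is, $\lam_{A+B}=\lam_A+\lam_B$. (Alternatively, one may argue directly: the inclusion $A+B\subseteq B_{\lam_A+\lam_B}$ follows from the triangle inequality, while the reverse inclusion follows by splitting any $c$ with $||c||\leq\lam_A+\lam_B$ as $c=\frac{\lam_A}{\lam_A+\lam_B}\,c+\frac{\lam_B}{\lam_A+\lam_B}\,c$ when $\lam_A+\lam_B>0$, and trivially otherwise.)

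For \ref{sym-vec-sum:vec} I would pass through the level sets and reuse part \ref{sym-vec-sum:ball}. Since $u$ and $v$ are symmetric, the equivalence \ref{sym-vec-supp:vec}$\iff$\ref{sym-vec-supp:level} of Theorem \ref{sym-vec-supp} tells us that $u_{\al}$ and $v_{\al}$ are closed balls centered at the origin for every $\al\in[0,1]$. Theorem \ref{vec-sum-supp} identifies the level sets of the Zadeh sum as Minkowski sums, namely $(u\oplus v)_{\al}=u_{\al}+v_{\al}$ for all $\al\in[0,1]$. By part \ref{sym-vec-sum:ball} each $u_{\al}+v_{\al}$ is again a closed ball centered at the origin, so every level set $(u\oplus v)_{\al}$ is a closed ball centered at the origin. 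Feeding this back through the same equivalence of Theorem \ref{sym-vec-supp} then yields that $u\oplus v$ is symmetric.

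Since every step is a direct appeal to an already-established equivalence, there is no genuine obstacle here; the only points requiring care are to invoke the correct equivalences---symmetry with ball-shaped level sets in Theorem \ref{sym-vec-supp}, and the fuzzy sum with the level-wise Minkowski sum in Theorem \ref{vec-sum-supp}---and, in part \ref{sym-vec-sum:ball}, to note that the degenerate case where one or both radii vanish is handled uniformly by the support-function argument.
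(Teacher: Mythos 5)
Your proposal is correct and follows essentially the same route as the paper: part (i) via Propositions \ref{cb-ball} and \ref{cb-sum-supp}, and part (ii) by combining Theorems \ref{sym-vec-supp} and \ref{vec-sum-supp} levelwise. The only cosmetic difference is that in (ii) you work with the ball-shaped level sets (equivalence \ref{sym-vec-supp:level}) while the paper works with constant support functions (equivalence \ref{sym-vec-supp:supp}); these are interchangeable by Proposition \ref{cb-ball}, so the arguments coincide in substance.
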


\begin{proof}
For \ref{sym-vec-sum:ball}, just note that the support function of $A+B$ satisfies
$$h_{A+B}=h_A+h_B$$
by Proposition \ref{cb-sum-supp}, and thus $h_{A+B}$ is a (nonnegative) constant function since so are $h_A$ and $h_B$. The conclusion then follows from Proposition \ref{cb-ball}.

For \ref{sym-vec-sum:vec}, since $u$ and $v$ are both symmetric, for each $\al\in[0,1]$, Theorem \ref{sym-vec-supp} ensures that $h_u(\al,-)$ and $h_v(\al,-)$ are both constant on $S^{n-1}$ which, in conjunction with Theorem \ref{vec-sum-supp}, implies that
$$h_{u\oplus v}(\al,-)=h_u(\al,-)+h_v(\al,-)$$
is constant on $S^{n-1}$; that is, $u\oplus v$ is also symmetric.
\end{proof}

\subsection{Inner parallel bodies of convex bodies} \label{IPB}

Let $B_{\lam}$ denote the closed ball of radius $\lam\geq 0$ centered at the origin. Recall that the \emph{inner parallel body} (see \cite{Gruber2007,Schneider2013}) of a convex body $A\in\CC^n$ at distance $\lam$ is given by
$$A_{-\lam}=\{t\in\bbR^n\mid t+B_{\lam}\subseteq A\},$$
which is also a convex body as long as $A_{-\lam}\neq\varnothing$.

\begin{lem} \label{cb-pb-supp}
For convex bodies $A,B\in\CC^n$ and $\lam\geq 0$,
$$A=B+B_{\lam}\iff h_A=h_B+\lam\implies B=A_{-\lam}.$$
\end{lem}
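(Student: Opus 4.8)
The plan is to treat the two assertions separately, since the stated line is really a conjunction of an equivalence and an implication. The equivalence $A=B+B_{\lam}\iff h_A=h_B+\lam$ follows directly from the tools already in place: by Proposition \ref{cb-ball} the support function of the ball $B_{\lam}$ is the constant function at its radius, i.e. $h_{B_{\lam}}=\lam$, so Proposition \ref{cb-sum-supp} gives $A=B+B_{\lam}\iff h_A=h_B+h_{B_{\lam}}=h_B+\lam$. No further work is needed here.

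For the implication $A=B+B_{\lam}\implies B=A_{-\lam}$, I would prove the two inclusions $B\subseteq A_{-\lam}$ and $A_{-\lam}\subseteq B$. The first is immediate from the definition of the inner parallel body: if $b\in B$, then $b+B_{\lam}\subseteq B+B_{\lam}=A$, so $b\in A_{-\lam}$.

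For the reverse inclusion I would pass to support functions. A preliminary observation is that for convex bodies $C,D\in\CC^n$ one has $C\subseteq D$ if and only if $h_C\leq h_D$ pointwise on $S^{n-1}$: the forward direction is clear from the definition of the support function, and the converse is exactly Proposition \ref{cb-elements}. Now let $t\in A_{-\lam}$, so that $t+B_{\lam}\subseteq A$. The translate $t+B_{\lam}$ is itself a convex body, being the Minkowski sum of $\{t\}$ and $B_{\lam}$, and its support function is $h_{t+B_{\lam}}(x)=\langle t,x\rangle+\lam$ for all $x\in S^{n-1}$ (by Proposition \ref{cb-sum-supp} together with $h_{B_{\lam}}=\lam$). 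Hence the inclusion $t+B_{\lam}\subseteq A$ translates into $\langle t,x\rangle+\lam\leq h_A(x)$ for all $x\in S^{n-1}$. Substituting $h_A=h_B+\lam$ and cancelling $\lam$, I obtain $\langle t,x\rangle\leq h_B(x)$ for all $x\in S^{n-1}$, whence $t\in B$ by Proposition \ref{cb-elements}. This yields $A_{-\lam}\subseteq B$ and completes the argument.

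The only genuinely delicate step is the reverse inclusion $A_{-\lam}\subseteq B$; its success hinges on the clean additivity of support functions under the Minkowski sum and the membership criterion of Proposition \ref{cb-elements}, which together let the additive constant $\lam$ cancel exactly. I expect no real obstacle beyond being careful that the subset-versus-support-function correspondence is applied to the genuine convex bodies $t+B_{\lam}$ and $A$, so that Proposition \ref{cb-elements} is invoked legitimately.
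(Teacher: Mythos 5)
Your proof is correct and follows essentially the same route as the paper: the equivalence via Propositions \ref{cb-ball} and \ref{cb-sum-supp}, the easy inclusion $B\subseteq A_{-\lam}$ from the definition, and the reverse inclusion by showing $\langle t,x\rangle+\lam\leq h_A(x)=h_B(x)+\lam$ and invoking Proposition \ref{cb-elements}. The only cosmetic difference is that the paper obtains this inequality by testing the single point $t+\lam x\in A$ for each direction $x$, whereas you package it as monotonicity of support functions applied to the translate $t+B_{\lam}$; the underlying computation is identical.
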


\begin{proof}
The equivalence of $A=B+B_{\lam}$ and $h_A=h_B+\lam$ follows immediately from Propositions \ref{cb-ball} and \ref{cb-sum-supp}. In this case, from $A=B+B_{\lam}$ and the definition of $A_{-\lam}$ we soon see that $B\subseteq A_{-\lam}$. For the reverse inclusion, suppose that $a\in A_{-\lam}$. For each $x\in S^{n-1}$, note that $\lam x\in B_{\lam}$, and consequently
\begin{equation} \label{a-lamx-in-A}
a+\lam x\in A.
\end{equation}
It follows that
$$\langle a,x\rangle+\lam=\langle a,x\rangle+\langle\lam x,x\rangle=\langle a+\lam x,x\rangle\leq h_A(x),$$
where the last inequality is obtained by applying Proposition \ref{cb-elements} to \eqref{a-lamx-in-A}. Therefore,
$$\langle a,x\rangle\leq h_A(x)-\lam=h_B(x)$$
for all $x\in S^{n-1}$, and thus Proposition \ref{cb-elements} guarantees that $a\in B$.
\end{proof}

In general, the last implication of Lemma \ref{cb-pb-supp} is proper; that is, $B=A_{-\lam}$ does not imply $A=B+B_{\lam}$. For example, let $A$ and $B$ be the hypercubes of side lengths $4$ and $3$, respectively, both centered at the origin. Then $B=A_{-1}$, but $A\supsetneq B+B_1$.

We say that a nonempty inner parallel body $A_{-\lam}$ of $A\in\CC^n$ is \emph{regular} if
$$A=A_{-\lam}+B_{\lam},$$
or equivalently (see Lemma \ref{cb-pb-supp}), if
$$h_A=h_{A_{-\lam}}+\lam.$$
For each convex body $A\in\CC^n$, we write
$$\Lam_A:=\{\lam\geq 0\mid A_{-\lam}\ \text{is a regular inner parallel body of}\ A\}.$$

\begin{prop} \label{LamA-closed-interval}
$\Lam_A$ is a closed interval, given by
$$\Lam_A=[0,\lam_A],$$
where $\lam_A:=\bv\Lam_A$.
\end{prop}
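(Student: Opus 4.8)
The plan is to establish three facts about $\Lam_A$: that it is nonempty and bounded, so that $\lam_A=\bv\Lam_A$ is a well-defined nonnegative real; that it is downward closed, hence an interval with left endpoint $0$; and that it contains its supremum $\lam_A$, hence is closed. First I would note that $0\in\Lam_A$, since $B_0=\{o\}$ forces $A_{-0}=A$ and thus $h_A=h_{A_{-0}}+0$ trivially. Boundedness follows because any $\lam\in\Lam_A$ with $A_{-\lam}\neq\varnothing$ provides a point $t$ with $t+B_\lam\subseteq A$, so $A$ contains a ball of radius $\lam$ and $\lam$ cannot exceed half the diameter of the compact set $A$. Hence $\lam_A\in[0,\infty)$.

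For the interval property, suppose $\lam\in\Lam_A$ and $0\leq\mu\leq\lam$. Using the ball decomposition $B_\lam=B_\mu+B_{\lam-\mu}$ from Corollary \ref{sym-vec-sum}\ref{sym-vec-sum:ball} together with the regularity identity $A=A_{-\lam}+B_\lam$, I would rewrite $A=(A_{-\lam}+B_{\lam-\mu})+B_\mu$. Setting $C:=A_{-\lam}+B_{\lam-\mu}$, which is a nonempty convex body, Lemma \ref{cb-pb-supp} applied to $A=C+B_\mu$ yields both $C=A_{-\mu}$ and $h_A=h_C+\mu=h_{A_{-\mu}}+\mu$; that is, $A_{-\mu}$ is a regular inner parallel body and $\mu\in\Lam_A$. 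Therefore $[0,\lam_A)\subseteq\Lam_A$.

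The remaining and most delicate step is closedness, i.e. $\lam_A\in\Lam_A$. Choosing $\lam_k\in\Lam_A$ with $\lam_k\uparrow\lam_A$, I would first identify $A_{-\lam_A}=\bigcap_k A_{-\lam_k}$: the inclusion $A_{-\lam_A}\subseteq\bigcap_k A_{-\lam_k}$ is monotonicity of $\lam\mapsto A_{-\lam}$, while the nontrivial reverse inclusion follows by rescaling, since for $t\in\bigcap_k A_{-\lam_k}$ and any $s$ with $||s||\leq\lam_A$ one has $t+\tfrac{\lam_k}{\lam_A}s\in A$ with $\tfrac{\lam_k}{\lam_A}s\to s$, and $A$ is closed. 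As the $A_{-\lam_k}$ form a nested family of nonempty compact convex sets, Cantor's intersection theorem makes $A_{-\lam_A}$ a nonempty convex body. It then remains to prove $h_{A_{-\lam_A}}=h_A-\lam_A$. Since $A_{-\lam_A}\subseteq A_{-\lam_k}$, the inequality $h_{A_{-\lam_A}}(x)\leq h_{A_{-\lam_k}}(x)=h_A(x)-\lam_k\to h_A(x)-\lam_A$ is automatic; the reverse inequality is the crux, and this is exactly where the main obstacle lies, because the easy estimate points the wrong way. To recover it I would use a compactness argument: picking $a_k\in A_{-\lam_k}$ attaining $h_{A_{-\lam_k}}(x)=\langle a_k,x\rangle$, all $a_k$ lie in the compact set $A_{-\lam_1}$, so a subsequence converges to some $a_0$, which lies in each closed $A_{-\lam_m}$ and hence in $A_{-\lam_A}$; this gives $h_{A_{-\lam_A}}(x)\geq\langle a_0,x\rangle=\lim_k h_{A_{-\lam_k}}(x)=h_A(x)-\lam_A$. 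Combining the two inequalities yields $h_A=h_{A_{-\lam_A}}+\lam_A$, so $\lam_A\in\Lam_A$ by Lemma \ref{cb-pb-supp}, and therefore $\Lam_A=[0,\lam_A]$.
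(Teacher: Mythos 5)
Your proof is correct, and it reaches the two nontrivial facts (downward closure and $\lam_A\in\Lam_A$) by a route that differs from the paper's in execution though not in overall architecture. For downward closure the paper manipulates points directly: given $t=a+b$ with $a\in A_{-\lam}$ and $b\in B_{\lam}$, it exhibits $t=\left[a+\left(1-\tfrac{\lam'}{\lam}\right)b\right]+\tfrac{\lam'}{\lam}b$ and verifies by hand that the first summand lies in $A_{-\lam'}$; your version packages the same idea as the ball decomposition $B_{\lam}=B_{\mu}+B_{\lam-\mu}$ followed by an appeal to Lemma \ref{cb-pb-supp}, which is slightly cleaner and identifies $C=A_{-\mu}$ for free. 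For the closedness step the paper again argues pointwise: it decomposes an arbitrary $t\in A$ as $a_k+b_k$ along $\lam_k\uparrow\lam_A$, extracts convergent subsequences, and checks $a_0\in A_{-\lam_A}$ directly, thereby proving $A\subseteq A_{-\lam_A}+B_{\lam_A}$. You instead establish the intersection identity $A_{-\lam_A}=\bigcap_k A_{-\lam_k}$ (nonempty by Cantor), prove the support-function identity $h_{A_{-\lam_A}}=h_A-\lam_A$ by a compactness argument on maximizers, and then invoke Lemma \ref{cb-pb-supp}. Both are subsequence-extraction arguments of comparable length; yours stays closer to the support-function calculus used throughout the paper and essentially anticipates Corollary \ref{Alam-reg}, while the paper's is more elementary and self-contained at the level of points. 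Two minor remarks: the rescaling $\tfrac{\lam_k}{\lam_A}s$ in your intersection identity presupposes $\lam_A>0$, so you should dispose of the trivial case $\lam_A=0$ (where $\Lam_A=\{0\}$) separately; and you explicitly supply the nonemptiness and boundedness of $\Lam_A$, which the paper leaves implicit.
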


\begin{proof}
{\bf Step 1.} $A_{-\lam_A}$ is a regular inner parallel body of $A$.

In order to obtain $A_{-\lam_A}+B_{\lam_A}=A$, it suffices to show that every $t\in A$ lies in $A_{-\lam_A}+B_{\lam_A}$. Since $A=A_{-\lam}+B_{\lam}$ for all $\lam\in\Lam_A$, for an increasing sequence $\{\lam_k\}\subseteq\Lam_A$ that converges to $\lam_A$ we may find $a_k\in A_{-\lam_k}$ and $b_k\in B_{\lam_k}$ with $$t=a_k+b_k$$
for all positive integers $k$. Note that both the sequences $\{a_k\}$ and $\{b_k\}$ are bounded, and thus they have convergent subsequences. Without loss of generality we assume that $\lim\limits_{k\ra\infty}a_k=a_0$ and $\lim\limits_{k\ra\infty}b_k=b_0$. Then it is clear that
$$t=a_0+b_0$$
and $b_0\in B_{\lam_A}$, and it remains to prove that $a_0\in A_{-\lam_A}$. To this end, we need to show that $a_0+y\in A$ for all $y\in B_{\lam_A}$. Indeed, let $\{y_k\}\subseteq B_{\lam_A}$ be a sequence with $\lim\limits_{k\ra\infty}y_k=y$ and $y_k\in B_{\lam_k}$ for all positive integers $k$. Then from $a_k\in A_{-\lam_k}$ we deduce that $a_k+y_k\in A$, and consequently $a_0+y\in A$, as desired.

{\bf Step 2.} If $\lam\in\Lam_A$ and $0\leq\lam'<\lam$, then $\lam'\in\Lam_A$.

In order to obtain $A_{-\lam'}+B_{\lam'}=A$, it suffices to show that every $t\in A$ lies in $A_{-\lam'}+B_{\lam'}$. Since $A=A_{-\lam}+B_{\lam}$, we may find $a\in A_{-\lam}$ and $b\in B_{\lam}$ with
$$t=a+b.$$
Then it is clear that
$$t=\left[a+\left(1-\dfrac{\lam'}{\lam}\right)b\right]+\dfrac{\lam'}{\lam}b$$
and $\dfrac{\lam'}{\lam}b\in B_{\lam'}$, and it remains to prove that $a+\left(1-\dfrac{\lam'}{\lam}\right)b\in A_{-\lam'}$. To this end, we need to show that
$$a+\left(1-\dfrac{\lam'}{\lam}\right)b+y'\in A$$
for all $y'\in B_{\lam'}$. Indeed, $\left(1-\dfrac{\lam'}{\lam}\right)b+y'\in B_{\lam}$ since
$$\left|\left|\left(1-\dfrac{\lam'}{\lam}\right)b+y'\right|\right|\leq\left(1-\dfrac{\lam'}{\lam}\right)||b||+||y'||\leq\left(1-\dfrac{\lam'}{\lam}\right)\lam+\lam'=\lam,$$
and together with $a\in A_{-\lam}$ we deduce that $a+\left(1-\dfrac{\lam'}{\lam}\right)b+y'\in A$, which completes the proof.
\end{proof}

As an immediate consequence of Proposition \ref{LamA-closed-interval}, we have the following:

\begin{cor} \label{Alam-reg}
For each nonempty subset $\Lam_0\subseteq\Lam_A$, let $\lam_0=\bv\Lam_0$. Then $A_{-\lam_0}$ is a regular inner parallel body of $A$, which satisfies
$$A_{-\lam_0}=\bigcap_{\lam\in\Lam_0}A_{-\lam}\quad\text{and}\quad h_{A_{-\lam_0}}=\bw_{\lam\in\Lam_0}h_{A_{-\lam}}.$$
\end{cor}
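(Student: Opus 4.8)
The plan is to reduce the entire statement to the support-function identity and then read off the set-theoretic identity from Proposition~\ref{cb-elements}. First I would verify that $A_{-\lam_0}$ really is a regular inner parallel body. Since $\Lam_0\subseteq\Lam_A$, every element of $\Lam_0$ lies in the interval $[0,\lam_A]$ furnished by Proposition~\ref{LamA-closed-interval}, so that $0\leq\lam_0=\bv\Lam_0\leq\lam_A$. As $\Lam_A=[0,\lam_A]$ is a closed interval, this gives $\lam_0\in\Lam_A$; hence, by definition, $A_{-\lam_0}$ is a nonempty regular inner parallel body of $A$, and in particular $A_{-\lam_0}$ is a convex body. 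This settles the first assertion.

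Next I would pass to support functions. For each regular $\lam$ — that is, each $\lam\in\Lam_0\cup\{\lam_0\}$ — the definition of regularity together with Lemma~\ref{cb-pb-supp} yields $h_{A_{-\lam}}=h_A-\lam$ as functions on $S^{n-1}$. Since $h_A(x)$ is independent of $\lam$, for each fixed $x\in S^{n-1}$ one computes
$$\bw_{\lam\in\Lam_0}h_{A_{-\lam}}(x)=\bw_{\lam\in\Lam_0}\big(h_A(x)-\lam\big)=h_A(x)-\bv_{\lam\in\Lam_0}\lam=h_A(x)-\lam_0=h_{A_{-\lam_0}}(x),$$
which is exactly the asserted identity $h_{A_{-\lam_0}}=\bw_{\lam\in\Lam_0}h_{A_{-\lam}}$.

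Finally I would deduce $A_{-\lam_0}=\bigcap_{\lam\in\Lam_0}A_{-\lam}$ from the identity just proved. Because each $A_{-\lam}$ with $\lam\in\Lam_0$ and $A_{-\lam_0}$ are convex bodies, Proposition~\ref{cb-elements} gives, for any $t\in\bbR^n$, the chain of equivalences: $t\in\bigcap_{\lam\in\Lam_0}A_{-\lam}$ holds iff $\langle t,x\rangle\leq h_{A_{-\lam}}(x)=h_A(x)-\lam$ for all $\lam\in\Lam_0$ and all $x\in S^{n-1}$; iff $\langle t,x\rangle\leq h_A(x)-\lam_0=h_{A_{-\lam_0}}(x)$ for all $x\in S^{n-1}$; iff $t\in A_{-\lam_0}$. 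This is the set identity, and no separate argument for the nonemptiness of the intersection is needed, since the chain compares membership pointwise.

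I expect no serious obstacle: the infimum computation and the equivalence chain are bookkeeping, and the genuine content is carried entirely by Proposition~\ref{LamA-closed-interval} (which places $\lam_0$ inside $\Lam_A$) and by the regularity equation $h_{A_{-\lam}}=h_A-\lam$. The one point that warrants care is precisely that passage: for an arbitrary inner parallel body one has only the inclusion $A_{-\lam}+B_{\lam}\subseteq A$, hence the one-sided estimate $h_{A_{-\lam}}\leq h_A-\lam$, and it is the \emph{equality} guaranteed by regularity (rather than the inequality underlying Lemma~\ref{cb-pb-supp}) that makes the infimum collapse to $h_A-\lam_0$. Keeping track of where regularity is invoked is thus the only subtlety.
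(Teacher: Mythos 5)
Your proposal is correct and follows essentially the same route as the paper: both derive $h_{A_{-\lam_0}}=h_A-\lam_0=\bw_{\lam\in\Lam_0}(h_A-\lam)$ from Lemma \ref{cb-pb-supp} and then obtain the set identity from Proposition \ref{cb-elements}. Your explicit preliminary check that $\lam_0\in\Lam_A$ via Proposition \ref{LamA-closed-interval} (which the paper leaves implicit) and your closing remark on where regularity, as opposed to the one-sided estimate $h_{A_{-\lam}}\leq h_A-\lam$, is actually needed are both accurate and welcome clarifications.
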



\begin{proof}
Firstly, with Lemma \ref{cb-pb-supp} we obtain that
$$h_{A_{-\lam_0}}=h_A-\lam_0=h_A-\bv\Lam_0=\bw_{\lam\in\Lam_0}(h_A-\lam)=\bw_{\lam\in\Lam_0}h_{A_{-\lam}}.$$

Secondly, if $t\in A_{-\lam}$ for all $\lam\in\Lam_0$, then Proposition \ref{cb-elements} implies that
$$\langle t,x\rangle\leq h_{A_{-\lam}}(x)=h_A(x)-\lam$$
for all $x\in S^{n-1}$ and $\lam\in\Lam_0$, and thus
$$\langle t,x\rangle\leq\bw_{\lam\in\Lam_0}(h_A(x)-\lam)=h_{A_{-\lam_0}}(x)$$
for all $x\in S^{n-1}$, which means that $t\in A_{-\lam_0}$. Hence $\bigcap\limits_{\lam\in\Lam_0}A_{-\lam}\subseteq A_{-\lam_0}$, which in fact becomes an identity since the reverse inclusion is trivial.
\end{proof}

In particular,
\begin{equation} \label{smallest-reg-pb}
A_{-\lam_A}=\bigcap_{\lam\in\Lam_A}A_{-\lam}
\end{equation}
is the smallest regular inner parallel body of $A$. We say that a convex body $A\in\CC^n$ is \emph{irreducible} if
$$A=A_{-\lam_A};$$
that is, if $A$ does not have any non-trivial regular inner parallel body.


Let $\Ci$ denote the set of irreducible convex bodies in $\bbR^n$, and let $\CB^n$ denote the set of closed balls in $\bbR^n$ centered at the origin. For each convex body $A\in\CC^n$, the decomposition
$$A=A_{-\lam_A}+B_{\lam_A}$$
is unique in the sense of the following:

\begin{thm} \label{cb-decomp}
For each convex body $A\in\CC^n$, there exist a unique $B\in\Ci$ and a unique $B_{\lam}\in\CB^n$ such that $A=B+B_{\lam}$. Moreover, the correspondence
$$A\mapsto(A_{-\lam_A},B_{\lam_A})$$
establishes a bijection
$$\CC^n\stackrel{\thicksim}{\longleftrightarrow}\Ci\times\CB^n,$$
whose inverse is given by $(B,B_{\lam})\mapsto B+B_{\lam}$.
\end{thm}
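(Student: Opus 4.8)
The plan is to first establish existence and uniqueness of a decomposition $A=B+B_\lam$ with $B\in\Ci$ and $B_\lam\in\CB^n$, and then obtain the bijection as a formal consequence of these two facts. The whole argument would be run through support functions, leaning on Lemma \ref{cb-pb-supp} (which detects a ball summand via $h_A=h_B+\lam$) and on Corollary \ref{sym-vec-sum}, by which the Minkowski sum of two origin-centred balls is again such a ball with radii adding.

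For existence I would take $B:=A_{-\lam_A}$ and $B_\lam:=B_{\lam_A}$. Since Proposition \ref{LamA-closed-interval} gives $\Lam_A=[0,\lam_A]$, we have $\lam_A\in\Lam_A$, so $A=A_{-\lam_A}+B_{\lam_A}$ is a regular decomposition of the required form; the only point needing work is that $A_{-\lam_A}$ is irreducible. I would prove this by contradiction: writing $\lam_B:=\bv\Lam_B$ and assuming $\lam_B>0$, regularity at $\lam_B$ yields $B=B_{-\lam_B}+B_{\lam_B}$, hence
$$A=B+B_{\lam_A}=B_{-\lam_B}+B_{\lam_B}+B_{\lam_A}=B_{-\lam_B}+B_{\lam_A+\lam_B}$$
by Corollary \ref{sym-vec-sum}. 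Lemma \ref{cb-pb-supp} then identifies $B_{-\lam_B}$ with $A_{-(\lam_A+\lam_B)}$ and shows this parallel body is regular, so $\lam_A+\lam_B\in\Lam_A$; this contradicts $\lam_A=\bv\Lam_A$ together with $\lam_B>0$. Thus $\lam_B=0$ and $A_{-\lam_A}\in\Ci$.

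For uniqueness, suppose $A=B+B_\lam=B'+B_{\lam'}$ with $B,B'\in\Ci$ and, without loss of generality, $\lam\leq\lam'$. Comparing support functions gives $h_B=h_{B'}+(\lam'-\lam)$, so Lemma \ref{cb-pb-supp} yields $B=B'+B_{\lam'-\lam}$ and exhibits $B'=B_{-(\lam'-\lam)}$ as a regular inner parallel body of $B$, i.e.\ $\lam'-\lam\in\Lam_B$. But $B$ irreducible forces $\lam_B=0$ and hence $\Lam_B=\{0\}$, so $\lam=\lam'$; then $h_B=h_{B'}$ gives $B=B'$ by the uniqueness in Theorem \ref{cb-supp}.

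Finally the bijection drops out. The existence step shows the correspondence $A\mapsto(A_{-\lam_A},B_{\lam_A})$ indeed maps into $\Ci\times\CB^n$, and the regular decomposition $A=A_{-\lam_A}+B_{\lam_A}$ says this map followed by $(B,B_\lam)\mapsto B+B_\lam$ is the identity on $\CC^n$. Conversely, given $(B,B_\lam)\in\Ci\times\CB^n$, the body $A:=B+B_\lam$ already carries the irreducible-plus-ball decomposition $A=B+B_\lam$, so uniqueness forces $A_{-\lam_A}=B$ and $B_{\lam_A}=B_\lam$, giving the identity in the other direction. I expect the irreducibility of $A_{-\lam_A}$ in the existence step to be the crux: it is the single place where the supremum nature of $\lam_A$ must be played against the additivity of ball radii, and one must invoke Corollary \ref{sym-vec-sum} to guarantee that the merged summand $B_{\lam_A+\lam_B}$ really is a ball, so that Lemma \ref{cb-pb-supp} applies and places $\lam_A+\lam_B$ inside $\Lam_A$.
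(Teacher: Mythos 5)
Your proof is correct; the paper in fact states Theorem \ref{cb-decomp} without an explicit proof, treating it as a direct consequence of Proposition \ref{LamA-closed-interval}, Lemma \ref{cb-pb-supp} and Corollary \ref{sym-vec-sum}, and your argument is exactly the intended one. In particular, you correctly identify the only nontrivial point, namely the irreducibility of $A_{-\lam_A}$, and settle it by playing the maximality of $\lam_A$ against the additivity of ball radii, with uniqueness and the bijection then following formally via support functions.
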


\subsection{Skew fuzzy vectors and Mare{\v s} cores} \label{Vec-Skew}

Motivated by the notion of \emph{skew fuzzy number} in the sense of Chai-Zhang \cite{Chai2016}, we introduce skew fuzzy vectors:

\begin{defn} \label{skew-vec-def}
A fuzzy vector $u\in\CF^n$ is \emph{skew} if it cannot be written as the sum of a fuzzy vector and a non-trivial symmetric fuzzy vector; that is, if
$$u=v\oplus w$$
for some $v\in\CF^n$ and $w\in\Fs$, then $w=\tilde{o}$.
\end{defn}

Following the terminology from fuzzy numbers \cite{Marevs1992,Hong2003,Qiu2014}, Mare{\v s} cores of fuzzy vectors are defined as follows:

\begin{defn} \label{Mares-core-def}
A fuzzy vector $v\in\CF^n$ is a \emph{Mare{\v s} core} of a fuzzy vector $u\in\CF^n$ if $v$ is skew and
$$u=v\oplus w$$
for some symmetric fuzzy vector $w\in\Fs$.
\end{defn}

These concepts are closely related to inner parallel bodies introduced in Subsection \ref{IPB}:

\begin{lem} \label{u=v-oplus-w}
For fuzzy vectors $u,v\in\CF^n$, if $u=v\oplus w$ for some symmetric fuzzy vector $w\in\Fs$, then for each $\al\in[0,1]$,
\begin{enumerate}[label={\rm(\roman*)}]
\item \label{u=v-oplus-w:level}  $v_{\al}$ is a regular inner parallel body of $u_{\al}$;
\item \label{u=v-oplus-w:supp} $h_u(\al,-)-h_v(\al,-):S^{n-1}\to\bbR$ is a (nonnegative) constant function.
\end{enumerate}
\end{lem}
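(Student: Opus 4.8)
The plan is to reduce the statement entirely to the three representation results already in hand, since no fresh geometric argument is required. First I would invoke Theorem~\ref{vec-sum-supp}: from the hypothesis $u=v\oplus w$ it yields simultaneously the level-set identity $u_{\al}=v_{\al}+w_{\al}$ and the support-function identity $h_u=h_v+h_w$, each holding for every $\al\in[0,1]$. These two identities are the only consequences of the sum decomposition that I will use.

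Next I would unpack the symmetry of $w$ via Theorem~\ref{sym-vec-supp}. Since $w\in\Fs$, for each $\al\in[0,1]$ the level set $w_{\al}$ is a closed ball $B_{\lam_{\al}}$ centered at the origin, and equivalently $h_w(\al,-):S^{n-1}\to\bbR$ is the nonnegative constant $\lam_{\al}$, which by Proposition~\ref{cb-ball} is precisely the radius of $w_{\al}$. Part~\ref{u=v-oplus-w:supp} then drops out at once: rearranging $h_u=h_v+h_w$ gives $h_u(\al,-)-h_v(\al,-)=h_w(\al,-)=\lam_{\al}$, a nonnegative constant function on $S^{n-1}$.

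For part~\ref{u=v-oplus-w:level} I would rewrite the level-set identity as $u_{\al}=v_{\al}+B_{\lam_{\al}}$ and feed it into Lemma~\ref{cb-pb-supp} with $A=u_{\al}$, $B=v_{\al}$, and $\lam=\lam_{\al}$. The implication $A=B+B_{\lam}\implies B=A_{-\lam}$ supplied by that lemma identifies $v_{\al}=(u_{\al})_{-\lam_{\al}}$ as an inner parallel body of $u_{\al}$ at distance $\lam_{\al}$. Its regularity is then immediate, for the defining condition $u_{\al}=(u_{\al})_{-\lam_{\al}}+B_{\lam_{\al}}$ is exactly the identity $u_{\al}=v_{\al}+B_{\lam_{\al}}$ we began with.

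There is no genuinely hard step here; the whole argument is a clean chaining of Theorems~\ref{vec-sum-supp} and~\ref{sym-vec-supp} with Lemma~\ref{cb-pb-supp}. The only point deserving a moment's attention—and the closest thing to an obstacle—is to recognize that Lemma~\ref{cb-pb-supp} hands us not merely the inner-parallel-body equality $v_{\al}=(u_{\al})_{-\lam_{\al}}$ but, through the very hypothesis $u_{\al}=v_{\al}+B_{\lam_{\al}}$ that triggers the implication, the regularity as well, so that no separate verification of the equation $u_{\al}=(u_{\al})_{-\lam_{\al}}+B_{\lam_{\al}}$ is needed.
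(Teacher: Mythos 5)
Your proposal is correct and follows essentially the same route as the paper: both chain Theorem~\ref{vec-sum-supp}, the symmetry characterization (the paper cites Corollary~\ref{function-sym-sup}, you cite Theorem~\ref{sym-vec-supp} plus Proposition~\ref{cb-ball}, which amount to the same thing), and Lemma~\ref{cb-pb-supp}. The only cosmetic difference is that you enter Lemma~\ref{cb-pb-supp} through the level-set identity $u_{\al}=v_{\al}+B_{\lam_{\al}}$ while the paper enters through the equivalent support-function identity $h_{u_{\al}}=h_{v_{\al}}+\lam$; your closing observation that regularity is already contained in the triggering hypothesis is exactly right.
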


\begin{proof}
Since $u=v\oplus w$ and $w$ is symmetric, Theorem \ref{vec-sum-supp} and Corollary \ref{function-sym-sup} ensure that
\begin{equation} \label{hu-al=hv-al+hw-al}
h_u(\al,-)=h_v(\al,-)+h_w(\al),
\end{equation}
and thus \ref{u=v-oplus-w:supp} holds. For \ref{u=v-oplus-w:level}, by setting $\lam=h_w(\al)$ and rewriting \eqref{hu-al=hv-al+hw-al} as $h_{u_{\al}}=h_{v_{\al}}+\lam$ it follows soon that $v_{\al}=(u_{\al})_{-\lam}$ and $u_{\al}=v_{\al}+B_{\lam}$ by Lemma \ref{cb-pb-supp}, and hence $v_{\al}$ is a regular inner parallel body of $u_{\al}$.
\end{proof}

In order to construct a Mare{\v s} core of each fuzzy vector $u\in\CF^n$, we start with the following proposition, in which
$$\Up_u:=\{v\in\CF^n\mid u=v\oplus w,\ w\in\Fs\}$$
is clearly a non-empty set as $u\in\Up_u$:

\begin{prop} \label{cu-vec}
There is a fuzzy vector $\cu\in\CF^n$ whose level sets are given by
\renewcommand\arraystretch{1.6}
$$\cu_{\al}:=\left\{\begin{array}{ll}
\bigcap\limits_{v\in\Up_u}v_{\al} & \text{if}\ \al\in(0,1],\\
\overline{\bigcup\limits_{\be\in(0,1]}\cu_{\be}} & \text{if}\ \al=0,
\end{array}
\right.$$
\renewcommand\arraystretch{1.3}%
and whose support function $h_{\cu}:[0,1]\times S^{n-1}\to\bbR$ is given by
$$h_{\cu}(\al,x)=\left\{\begin{array}{ll}
\bw\limits_{v\in\Up_u}h_v(\al,x) & \text{if}\ \al\in(0,1],\\
\lim\limits_{\be\ra 0+}h_{\cu}(\be,x) & \text{if}\ \al=0.
\end{array}
\right.$$
\end{prop}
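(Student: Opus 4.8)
The plan is to build $\cu$ from its level sets via Theorem \ref{vec-cb}, and then to extract its support function directly from Corollary \ref{Alam-reg}; the right-continuity clause at $\al=0$ will come for free once $\cu$ is known to be a genuine fuzzy vector. The one structural input I would use throughout is that $\Up_u$ is nonempty (it contains $u=u\oplus\tilde{o}$, with $\tilde{o}\in\Fs$) and that, by Lemma \ref{u=v-oplus-w}, each $v\in\Up_u$ has level sets $v_\al=(u_\al)_{-\lam_v(\al)}$ that are \emph{regular} inner parallel bodies of $u_\al$, where $\lam_v(\al)\ge 0$ is the constant value of $h_u(\al,-)-h_v(\al,-)$ furnished by Lemma \ref{u=v-oplus-w}\ref{u=v-oplus-w:supp}. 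In particular, for each fixed $\al\in(0,1]$ the set $\Lam_0:=\{\lam_v(\al)\mid v\in\Up_u\}$ is a nonempty subset of $\Lam_{u_\al}$, so Corollary \ref{Alam-reg} applies with $\lam_0:=\bv\Lam_0$.

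First I would put $A_\al:=\bigcap_{v\in\Up_u}v_\al$ for $\al\in(0,1]$ and $A_0:=\overline{\bigcup_{\be\in(0,1]}A_\be}$, and verify \ref{vec-cb:convex}--\ref{vec-cb:0}. For \ref{vec-cb:convex} with $\al\in(0,1]$, the set $A_\al$ is an intersection of the convex bodies $v_\al$ (all contained in the compact set $u_\al$), hence closed, convex and bounded; its nonemptiness is exactly where Corollary \ref{Alam-reg} enters, identifying
$$A_\al=\bigcap_{\lam\in\Lam_0}(u_\al)_{-\lam}=(u_\al)_{-\lam_0}$$
as a regular, in particular nonempty, inner parallel body of $u_\al$. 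The set $A_0$ is the closure of a chain of convex bodies lying inside the compact set $u_0$, hence again a convex body, so \ref{vec-cb:convex} holds. Condition \ref{vec-cb:monotone} is immediate, since $A_\al=\bigcap_v v_\al\supseteq\bigcap_v v_\be=A_\be$ whenever $\al<\be$ by the monotonicity of each $v$, and \ref{vec-cb:0} holds by construction.

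The only point requiring a short computation is \ref{vec-cb:cont}. For an increasing sequence $\{\al_k\}\subseteq(0,1]$ converging to $\al_0>0$, I would simply interchange the two intersections,
$$\bigcap_{k\ge 1}A_{\al_k}=\bigcap_{k\ge 1}\bigcap_{v\in\Up_u}v_{\al_k}=\bigcap_{v\in\Up_u}\bigcap_{k\ge 1}v_{\al_k}=\bigcap_{v\in\Up_u}v_{\al_0}=A_{\al_0},$$
using \ref{vec-cb:cont} for each $v\in\Up_u$, i.e.\ $\bigcap_k v_{\al_k}=v_{\al_0}$. With \ref{vec-cb:convex}--\ref{vec-cb:0} in hand, Theorem \ref{vec-cb} produces a unique fuzzy vector $\cu\in\CF^n$ with $\cu_\al=A_\al$, which is precisely the asserted level-set formula.

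For the support function, the same instance of Corollary \ref{Alam-reg} that supplied nonemptiness also gives, for $\al\in(0,1]$,
$$h_{\cu}(\al,-)=h_{(u_\al)_{-\lam_0}}=\bw_{\lam\in\Lam_0}h_{(u_\al)_{-\lam}}=\bw_{v\in\Up_u}h_v(\al,-),$$
which is the claimed formula on $(0,1]$. At $\al=0$, rather than argue by hand, I would invoke the necessity half of Theorem \ref{vec-supp}: since $\cu$ is now a bona fide fuzzy vector, $h_{\cu}(-,x)$ is automatically right-continuous at $0$, so that $h_{\cu}(0,x)=\lim_{\be\ra 0+}h_{\cu}(\be,x)$, exactly matching the definition of $\cu_0$ via closure. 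The step I expect to demand the most care is the nonemptiness in \ref{vec-cb:convex}: it is the one place where one must convert the a-priori unstructured intersection $\bigcap_v v_\al$ into a single regular inner parallel body, and it rests entirely on knowing from Lemma \ref{u=v-oplus-w} that every $\lam_v(\al)$ lies in $\Lam_{u_\al}$, so that Corollary \ref{Alam-reg} is legitimately applicable; everything else is either formal or a direct interchange of intersections.
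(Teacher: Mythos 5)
Your proposal is correct and follows essentially the same route as the paper: both verify \ref{vec-cb:convex}--\ref{vec-cb:0} via Theorem \ref{vec-cb}, use Lemma \ref{u=v-oplus-w} together with Corollary \ref{Alam-reg} to identify $\bigcap_{v\in\Up_u}v_\al$ as a (nonempty) regular inner parallel body of $u_\al$ and to read off the support function on $(0,1]$, interchange intersections for \ref{vec-cb:cont}, and appeal to the right-continuity clause of Theorem \ref{vec-supp} for the value at $\al=0$. Your explicit check that each $\lam_v(\al)$ lies in $\Lam_{u_\al}$ is a welcome clarification of a step the paper leaves implicit.
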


\begin{proof}
For the existence of $\cu$, we show that $\{\cu_{\al}\mid\al\in[0,1]\}$ satisfies the conditions \ref{vec-cb:convex}--\ref{vec-cb:cont} in Theorem \ref{vec-cb}, as \ref{vec-cb:0} trivially holds.

Firstly, \ref{vec-cb:monotone} holds since $v_{\al}\supseteq v_{\be}$ for all $v\in\Up_u$ whenever $0\leq\al<\be\leq 1$.

Secondly, for \ref{vec-cb:convex}, note that for each $\al\in(0,1]$, Lemma \ref{u=v-oplus-w} tells us that $\cu_{\al}$ is the intersection of a family of regular inner parallel bodies of $u_{\al}$, and thus $\cu_{\al}$ is itself a regular inner parallel body of $u_{\al}$ (see Corollary \ref{Alam-reg}); in particular, $\cu_{\al}$ is a convex body. It remains to show that $\cu_0$ is a convex body. Indeed, $\cu_0$ is convex since it is the closure of the union of a family of convex bodies linearly ordered by inclusion, and its boundedness follows from $\cu_0\subseteq u_0$.

Thirdly, in order to obtain \ref{vec-cb:cont}, let $\{\al_k\}\subseteq(0,1]$ be an increasing sequence that converges to $\al_0>0$. Then $v_{\al_0}=\bigcap\limits_{k\geq 1}v_{\al_k}$ for each $v\in\Up_u$, and thus
$$\cu_{\al_0}=\bigcap\limits_{v\in\Up_u}v_{\al_0}=\bigcap\limits_{v\in\Up_u}\bigcap\limits_{k\geq 1}v_{\al_k}=\bigcap\limits_{k\geq 1}\bigcap\limits_{v\in\Up_u}v_{\al_k}=\bigcap\limits_{k\geq 1}\cu_{\al_k}.$$

For the support function of $\cu$, let $\al\in(0,1]$. Since $\cu_{\al}$ is the intersection of a family of regular inner parallel bodies of $u_{\al}$, it follows from Corollary \ref{Alam-reg} that
$$h_{\cu}(\al,-)=h_{\cu_{\al}}=\bw\limits_{v\in\Up_u}h_{v_{\al}}=\bw\limits_{v\in\Up_u}h_v(\al,-).$$
Finally, the value of $h_{\cu}(0,-)$ follows from the right-continuity of $h_{\cu}(-,x)$ at $0$ (see Theorem \ref{vec-supp}). 
\end{proof}

In fact, $\cu$ also lies in $\Up_u$, and it is a Mare{\v s} core of each fuzzy vector $u\in\CF^n$:

\begin{thm} \label{cu-core}
$\cu$ is skew, and there exists a symmetric fuzzy vector $\su\in\Fs$ such that $u=\cu\oplus\su$. Hence, $\cu$ is a Mare{\v s} core of $u$.
\end{thm}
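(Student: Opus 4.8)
The plan is to exploit the support-function description of $\cu$ from Proposition \ref{cu-vec} together with Lemma \ref{u=v-oplus-w}. For each $v\in\Up_u$ fix the symmetric witness $w\in\Fs$ with $u=v\oplus w$; by Theorem \ref{vec-sum-supp} and Corollary \ref{function-sym-sup} its support function is a single-variable function, and setting $c_v:=h_w$ we have $h_v(\al,x)=h_u(\al,x)-c_v(\al)$ for every $x\in S^{n-1}$. Feeding this into the formula $h_{\cu}(\al,x)=\bw_{v\in\Up_u}h_v(\al,x)$ of Proposition \ref{cu-vec} gives, for $\al\in(0,1]$, $h_{\cu}(\al,x)=h_u(\al,x)-c(\al)$ where $c(\al):=\bv_{v\in\Up_u}c_v(\al)$. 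The point of this first step is that $c(\al)=h_u(\al,x)-h_{\cu}(\al,x)$ is then independent of $x$.

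Next I would check that $c:[0,1]\to\bbR$ satisfies the hypotheses of Corollary \ref{function-sym-sup}. Each $c_v=h_w$ is nonnegative and non-increasing by Corollary \ref{function-sym-sup}, and a pointwise supremum of nonnegative non-increasing functions is again nonnegative and non-increasing, so $c$ inherits both. For the continuity conditions I would instead use the identity $c(\al)=h_u(\al,x_0)-h_{\cu}(\al,x_0)$ for a fixed $x_0\in S^{n-1}$: since $\cu\in\CF^n$ by Proposition \ref{cu-vec}, both $h_u(-,x_0)$ and $h_{\cu}(-,x_0)$ are left-continuous on $(0,1]$ and right-continuous at $0$ by condition \ref{vec-supp:x} of Theorem \ref{vec-supp}, hence so is their difference $c$. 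Corollary \ref{function-sym-sup} then yields a symmetric $\su\in\Fs$ with $h_{\su}(\al)=c(\al)$, whence $h_u=h_{\cu}+h_{\su}$ on $[0,1]\times S^{n-1}$, and Theorem \ref{vec-sum-supp} gives $u=\cu\oplus\su$. In particular $\cu\in\Up_u$, which proves the existence of the required symmetric summand.

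It remains to prove that $\cu$ is skew. Suppose $\cu=v'\oplus w'$ with $v'\in\CF^n$ and $w'\in\Fs$; I must force $w'=\tilde{o}$. Writing $u=\cu\oplus\su=v'\oplus(w'\oplus\su)$ and noting that $w'\oplus\su$ is symmetric by Corollary \ref{sym-vec-sum}, we see $v'\in\Up_u$, so by the very definition of $\cu$ we get $\cu_{\al}\subseteq v'_{\al}$ for every $\al\in(0,1]$. On the other hand $\cu=v'\oplus w'$ gives $\cu_{\al}=v'_{\al}+w'_{\al}$ by Theorem \ref{vec-sum-supp}, and since $w'$ is symmetric $w'_{\al}$ is a closed ball $B_{\mu(\al)}$ centred at the origin (Theorem \ref{sym-vec-supp}); as $o\in B_{\mu(\al)}$ this yields $v'_{\al}\subseteq\cu_{\al}$. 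Hence $\cu_{\al}=v'_{\al}=v'_{\al}+B_{\mu(\al)}$, and comparing support functions through Proposition \ref{cb-sum-supp} forces $\mu(\al)=0$. Thus $w'_{\al}=\{o\}$ for all $\al\in(0,1]$, i.e.\ $w'=\tilde{o}$, so $\cu$ is skew and is therefore a Mare\v{s} core of $u$.

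The step I expect to carry the real content is the skewness argument, whose crux is the observation that any decomposition of $\cu$ produces a new element $v'$ of $\Up_u$, letting the extremal definition of $\cu$ sandwich $\cu_{\al}$ between $v'_{\al}$ and itself. A subtler technical point worth flagging is that the two representations of $c$ must be used for complementary purposes: monotonicity cannot be read off the difference formula (a difference of non-increasing functions need not be non-increasing) and so is obtained from the supremum representation, whereas the continuity conditions cannot be read off the supremum (a supremum of left-continuous functions need not be left-continuous) and so are obtained from the difference formula. Keeping these two viewpoints separate is the main thing to get right.
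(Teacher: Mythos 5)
Your proposal is correct and follows essentially the same route as the paper: the symmetric summand $\su$ is obtained from the function $h_u-h_{\cu}$, using the supremum representation $\bv_{v\in\Up_u}h_{w_v}$ for nonnegativity and monotonicity and the difference representation for the continuity conditions, and skewness is proved by showing that any decomposition $\cu=v'\oplus w'$ puts $v'$ into $\Up_u$ so that the extremal definition of $\cu$ forces $w'=\tilde{o}$. The only cosmetic difference is that you run the skewness sandwich at the level of level sets while the paper phrases it with support functions via Lemma \ref{u=v-oplus-w}.
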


\begin{proof}
{\bf Step 1.} There exists a symmetric fuzzy vector $\su\in\Fs$ such that $u=\cu\oplus\su$.

Let $h:=h_u-h_{\cu}:[0,1]\times S^{n-1}\to\bbR$. Note that for each $x\in S^{n-1}$, $h(-,x)$ is left-continuous on $(0,1]$ and right-continuous at $0$ since so are $h_u(-,x)$ and $h_{\cu}(-,x)$ by Theorem \ref{vec-supp}. Moreover, as there exists a symmetric fuzzy vector $w_v\in\Fs$ such that $u=v\oplus w_v$ for all $v\in\Up_u$, it follows from Theorem \ref{vec-sum-supp} and Proposition \ref{cu-vec} that
$$h(\al,x)=h_u(\al,x)-h_{\cu}(\al,x)=h_u(\al,x)-\bw\limits_{v\in\Up_u}h_v(\al,x)=\bv\limits_{v\in\Up_u}(h_u(\al,x)-h_v(\al,x))=\bv\limits_{v\in\Up_u}h_{w_v}(\al)$$
for all $\al\in(0,1]$, $x\in S^{n-1}$. Hence, $h$ is nonnegative, independent of $x\in S^{n-1}$ and non-increasing on $\al\in[0,1]$ because so is each $h_{w_v}$ $(v\in\Up_u)$; that is, $h$ satisfies all the conditions of Corollary \ref{function-sym-sup}. Therefore, $h$ is the support function of a symmetric fuzzy vector $\su\in\Fs$, which clearly satisfies $u=\cu\oplus\su$ by Theorem \ref{vec-sum-supp}.

{\bf Step 2.} $\cu$ is skew.

Suppose that $\cu=v\oplus w$ and $w$ is symmetric. Then $h_v\leq h_{\cu}$ by Lemma \ref{u=v-oplus-w}.

Conversely, since $w$ and $\su$ are both symmetric, so is $w\oplus\su$ by Corollary \ref{sym-vec-sum}. Thus, together with
$$u=\cu\oplus\su=v\oplus w\oplus\su=v\oplus(w\oplus\su)$$
we obtain that $v\in\Up_u$, which implies that $h_{\cu}\leq h_v$ by Proposition \ref{cu-vec}.

Therefore, $h_{\cu}=h_v$, and it forces $w=\tilde{o}$, which shows that $\cu$ is skew.
\end{proof}

An obvious application of Theorem \ref{cu-core} is to determine whether a fuzzy vector is skew:

\begin{cor} \label{skew-u=cu}
A fuzzy vector $u\in\CF^n$ is skew if, and only if, $u=\cu$.
\end{cor}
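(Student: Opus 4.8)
The plan is to derive both implications directly from Theorem \ref{cu-core}, which already supplies the two facts doing all the work: namely that $\cu$ is skew, and that $u = \cu \oplus \su$ for some symmetric fuzzy vector $\su \in \Fs$. No new machinery should be needed; this is a formal consequence of that theorem together with the definition of skewness.

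For the sufficiency, suppose $u = \cu$. Since Theorem \ref{cu-core} asserts that $\cu$ is skew, the equality $u = \cu$ immediately gives that $u$ is skew. There is nothing further to check here.

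For the necessity, suppose $u$ is skew, and invoke the decomposition $u = \cu \oplus \su$ furnished by Theorem \ref{cu-core}, where $\su \in \Fs$. Reading this as $u = v \oplus w$ with $v = \cu$ and $w = \su$ symmetric, Definition \ref{skew-vec-def} forces $\su = \tilde{o}$. It then remains to note that $\tilde{o}$ is the neutral element for $\oplus$: by Zadeh's extension principle, $(u \oplus \tilde{o})(t) = \bv_{r+s=t} u(r) \wedge \tilde{o}(s) = u(t)$, since $\tilde{o}(s) = 0$ unless $s = o$. Hence $u = \cu \oplus \tilde{o} = \cu$, as desired.

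I do not anticipate any genuine obstacle, as the corollary is essentially a repackaging of Theorem \ref{cu-core}. The only point requiring a word of justification is that $\tilde{o}$ acts as an additive identity for $\oplus$, which is the sole computation in the argument and is entirely routine.
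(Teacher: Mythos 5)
Your proof is correct and follows essentially the same route as the paper: both directions are formal consequences of Theorem \ref{cu-core} and Definition \ref{skew-vec-def}. The only cosmetic difference is in the necessity direction, where the paper observes that skewness forces $\Up_u=\{u\}$ and then reads off $u=\cu$ from the construction of $\cu$, whereas you apply skewness directly to the decomposition $u=\cu\oplus\su$ to get $\su=\tilde{o}$; your explicit check that $\tilde{o}$ is neutral for $\oplus$ is the same fact the paper uses implicitly.
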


\begin{proof}
The ``if'' part is already obtained in Theorem \ref{cu-core}. For the ``only if'' part, just note that $\Up_u=\{u\}$ if $u$ is skew, and thus $u=\cu$ necessarily follows.
\end{proof}

Let $\Fk$ denote the set of skew fuzzy vectors of dimension $n$. Theorem \ref{cu-core} actually induces a surjective map as follows:

\begin{cor} \label{vec-decomp}
The assignment $(v,w)\mapsto v\oplus w$ establishes a surjective map
$$\Fk\times\Fs\to\CF^n.$$
\end{cor}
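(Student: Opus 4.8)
The plan is to recognize that this surjectivity statement is essentially a repackaging of Theorem~\ref{cu-core}; the only two things requiring attention are that the assignment genuinely lands in $\CF^n$ and that it is onto.

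Well-definedness I would dispatch first. Since $\Fk\subseteq\CF^n$ and $\Fs\subseteq\CF^n$, and the addition $\oplus$ sends $\CF^n\times\CF^n$ into $\CF^n$ (as recorded in Subsection~\ref{Vec-Add}: by \eqref{v-oplus-w} the level set $(v\oplus w)_\al=v_\al+w_\al$ is the Minkowski sum of two convex bodies, hence again a convex body, so the family $\{v_\al+w_\al\}$ inherits \ref{vec-cb:convex}--\ref{vec-cb:0} and Theorem~\ref{vec-cb} exhibits $v\oplus w$ as a fuzzy vector), every pair $(v,w)\in\Fk\times\Fs$ is sent to a genuine element $v\oplus w\in\CF^n$. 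Thus the map $\Fk\times\Fs\to\CF^n$ is well defined.

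For surjectivity I would fix an arbitrary $u\in\CF^n$ and invoke Theorem~\ref{cu-core} directly: the Mare{\v s} core $\cu$ is skew, so $\cu\in\Fk$, and there is a symmetric fuzzy vector $\su\in\Fs$ with $u=\cu\oplus\su$. Hence the pair $(\cu,\su)\in\Fk\times\Fs$ is mapped to $u$, and since $u$ was arbitrary the map is onto. I do not anticipate any genuine obstacle here: the substantive content --- that \emph{every} fuzzy vector admits a skew-plus-symmetric decomposition --- has already been carried out in Proposition~\ref{cu-vec} and Theorem~\ref{cu-core}, so this corollary merely reorganizes that decomposition into the language of a surjection, and the only point needing even a moment's care is the well-definedness check above.
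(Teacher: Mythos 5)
Your proposal is correct and matches the paper's reasoning exactly: the corollary is stated as an immediate consequence of Theorem~\ref{cu-core}, which supplies the decomposition $u=\cu\oplus\su$ with $\cu\in\Fk$ and $\su\in\Fs$, and your additional well-definedness check via Theorem~\ref{vec-sum-supp} is routine and consistent with Subsection~\ref{Vec-Add}.
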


Unfortunately, as the following Example \ref{vec-deomp-not-unique} reveals, unlike Theorem \ref{fuzzy-number-main} for the case of $n=1$ or Theorem \ref{cb-decomp} for convex bodies, in general the map given in Corollary \ref{vec-decomp} may \emph{not} be injective. In other words, a fuzzy vector may have \emph{many} Mare{\v s} cores, so that there may be \emph{many} ways to decompose a fuzzy vector as the sum of a skew fuzzy vector and a symmetric fuzzy vector!

As a preparation, let us present a sufficient condition for a fuzzy vector to be skew that is easy to verify:

\begin{lem} \label{level0-irr-skew}
Let $u\in\CF^n$ be a fuzzy vector. If the $0$-level set $u_0$ of $u$ is an irreducible convex body, then $u$ is skew.
\end{lem}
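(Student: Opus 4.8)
The plan is to unwind the defining property of skewness through the level set at $\al=0$. Suppose $u=v\oplus w$ for some $v\in\CF^n$ and some symmetric $w\in\Fs$; I must show that $w=\tilde{o}$. First I would apply Theorem \ref{vec-sum-supp} to translate this hypothesis into the two equivalent statements $u_\al=v_\al+w_\al$ (for all $\al$, in particular $\al=0$) and $h_u=h_v+h_w$. Since $w$ is symmetric, Theorem \ref{sym-vec-supp} tells me that its $0$-level set is a closed ball centered at the origin, say $w_0=B_{\lam_0}$ with radius $\lam_0:=h_w(0)\geq 0$, so that $u_0=v_0+B_{\lam_0}$.

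Next I would exploit irreducibility. From $u_0=v_0+B_{\lam_0}$, Lemma \ref{cb-pb-supp} yields $h_{u_0}=h_{v_0}+\lam_0$ together with $v_0=(u_0)_{-\lam_0}$, so that $(u_0)_{-\lam_0}$ is a \emph{regular} inner parallel body of $u_0$; equivalently, $\lam_0\in\Lam_{u_0}$. Since $u_0$ is irreducible it admits no non-trivial regular inner parallel body, which means $\Lam_{u_0}=\{0\}$, i.e. $\lam_{u_0}=\bv\Lam_{u_0}=0$ by Proposition \ref{LamA-closed-interval}. Hence $\lam_0=0$, that is, $h_w(0)=0$.

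Finally I would propagate this to the whole support function of $w$. By Corollary \ref{function-sym-sup} the single-variable function $h_w:[0,1]\to\bbR$ is nonnegative and non-increasing, whence $0\leq h_w(\al)\leq h_w(0)=0$ for every $\al\in[0,1]$; that is, $h_w\equiv 0$. Since a symmetric fuzzy vector has all its level sets equal to closed balls about the origin whose radii are the constant values of $h_w$ (Theorem \ref{sym-vec-supp}), $h_w\equiv 0$ forces every $w_\al=\{o\}$, i.e. $w=\tilde{o}$, as required.

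The only genuinely delicate point is the middle step: reading off that the radius $\lam_0$ of the symmetric summand lies in $\Lam_{u_0}$, and that irreducibility collapses $\Lam_{u_0}$ to $\{0\}$. Everything hinges on matching the convex-geometric notion ``regular inner parallel body'' from Lemma \ref{cb-pb-supp} with the fuzzy-theoretic decomposition at level $0$, and on the easy but essential monotonicity argument that upgrades the single equality $h_w(0)=0$ to the identity $h_w\equiv 0$.
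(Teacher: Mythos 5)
Your proposal is correct and follows essentially the same route as the paper: decompose at the $0$-level set, use irreducibility of $u_0$ to force the symmetric summand's $0$-level set to be the trivial ball, and then propagate triviality to all levels by monotonicity (the paper does this last step via \ref{vec-cb:monotone} on level sets rather than via the non-increasing support function $h_w$, but the two are interchangeable). No gaps.
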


\begin{proof}
Suppose that $u=v\oplus w$ and $w$ is symmetric. Then $u_{\al}=v_{\al}+w_{\al}$ for all $\al\in[0,1]$. In particular, $u_0=v_0+w_0$. Since $u_0$ is irreducible, $w_0$ must be trivial, i.e., $w_0=\{o\}$, where $o$ is the origin of $\bbR^n$. The condition \ref{vec-cb:monotone} of Theorem \ref{vec-cb} then forces $w_{\al}=\{o\}$ for all $\al\in[0,1]$, which means that $w=\tilde{o}$.
\end{proof}

\begin{exmp} \label{vec-deomp-not-unique}
Suppose that $n\geq 2$. For each $\al\in[0,1]$, let
$$A_{\al}=\prod\limits_{i=1}^n[\al-1,1-\al]$$
be the hypercube in $\bbR^n$ centered at the origin whose edge length is $2-2\al$.

For every $\lam\in[0,1]$, we may construct a fuzzy vector $v_{\lam}\in\CF^n$ whose level sets are given by
$$(v_{\lam})_{\al}=A_{\al}+B_{\lam\al},$$
and a non-trivial symmetric fuzzy vector $w_{\lam}\in\Fs$ whose level sets are given by
$$(w_{\lam})_{\al}=B_{2-\lam\al}.$$
Then there exists a fuzzy vector $u\in\CF^n$ with
$$u_{\al}=A_{\al}+B_2=A_{\al}+B_{\lam\al}+B_{2-\lam\al}=(v_{\lam})_{\al}+(w_{\lam})_{\al}$$
for all $\lam\in[0,1]$, where the second equation follows from Corollary \ref{sym-vec-sum}. Hence
$$u=v_{\lam}\oplus w_{\lam}$$
for all $\lam\in[0,1]$. As $(v_{\lam})_0=A_0=\dprod\limits_{i=1}^n[-1,1]$ is irreducible, from Lemma \ref{level0-irr-skew} we know that every $v_{\lam}$ is skew, and therefore every $v_{\lam}$ $(0\leq\lam\leq 1)$ is a Mare{\v s} core of $u$.
\end{exmp}

With Theorem \ref{fuzzy-number-main} and Example \ref{vec-deomp-not-unique} we can now conclude:

\begin{thm} \label{main}
Every fuzzy vector $u\in\CF^n$ has a unique Mare{\v s} core if, and only if, the dimension $n=1$.
\end{thm}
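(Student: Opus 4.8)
The plan is to split on the dimension and treat the two implications separately, after observing that \emph{existence} of a Mare{\v s} core is never at issue: Theorem \ref{cu-core} already produces, for every $u\in\CF^n$ and every $n$, a skew core $\cu$ together with a symmetric companion $\su\in\Fs$ satisfying $u=\cu\oplus\su$. Hence the entire content of the statement concerns the \emph{uniqueness} of the Mare{\v s} core, and I will show that this uniqueness holds precisely when $n=1$.

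For the sufficiency direction, suppose $n=1$. Here I would reduce to the one-dimensional Theorem \ref{fuzzy-number-main}. When $n=1$ a fuzzy vector is exactly a fuzzy number, and by the Remark following Definition \ref{sym-def} a symmetric fuzzy vector is exactly a symmetric fuzzy number in the sense of Mare{\v s}. Consequently Definition \ref{skew-vec-def} specializes to the notion of skew fuzzy number and Definition \ref{Mares-core-def} to the classical Mare{\v s} core, so that Theorem \ref{fuzzy-number-main} applies verbatim and yields a unique Mare{\v s} core for each $u\in\CF^1$. The only point requiring care is that these two sets of definitions genuinely coincide on dimension one, which is routine given the cited Remark.

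For the necessity direction I would argue the contrapositive: if $n\geq 2$, then uniqueness fails. Here Example \ref{vec-deomp-not-unique} does essentially all the work, exhibiting a single fuzzy vector $u\in\CF^n$ together with an entire one-parameter family $\{v_{\lam}\mid 0\leq\lam\leq 1\}\subseteq\Fk$ such that $u=v_{\lam}\oplus w_{\lam}$ for non-trivial symmetric $w_{\lam}\in\Fs$, each $v_{\lam}$ being skew and hence a Mare{\v s} core of $u$. To conclude, I only need these cores to be pairwise distinct, and this follows by comparing a single level set: since $(v_{\lam})_1=A_1+B_{\lam}=B_{\lam}$ is the closed ball of radius $\lam$, distinct values of $\lam$ give distinct $1$-level sets and therefore distinct fuzzy vectors. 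Thus such a $u$ admits more than one Mare{\v s} core, and uniqueness fails whenever $n\geq 2$.

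Combining the two directions gives the stated equivalence. I do not anticipate a serious obstacle, since all the substantive analytic and geometric effort has already been invested in Theorem \ref{cu-core}, Theorem \ref{fuzzy-number-main} and Example \ref{vec-deomp-not-unique}; the proof is a matter of assembling these correctly and discharging the two bookkeeping points noted above, namely the definitional match at $n=1$ and the pairwise distinctness of the family $\{v_{\lam}\}$.
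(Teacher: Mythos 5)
Your proposal is correct and follows essentially the same route as the paper, which likewise obtains the theorem by combining Theorem \ref{fuzzy-number-main} (for $n=1$) with Example \ref{vec-deomp-not-unique} (for $n\geq 2$). The two bookkeeping points you flag --- the definitional match in dimension one and the pairwise distinctness of the $v_{\lam}$ via $(v_{\lam})_1=B_{\lam}$ --- are sound and, if anything, make the argument slightly more explicit than the paper's.
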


\subsection{Mare{\v s} equivalent fuzzy vectors} \label{Mares-Equiv}

The following definition is also originated from fuzzy numbers (see \cite{Marevs1992,Qiu2014,Chai2016}):

\begin{defn} \label{Mares-equiv}
Fuzzy vectors $u,v\in\CF^n$ are \emph{Mare{\v s} equivalent}, denoted by $u\sim_M v$, if there exist symmetric fuzzy vectors $w,w'\in\Fs$ such that
$$u\oplus w=v\oplus w'.$$
\end{defn}

The relation $\sim_M$ is clearly an equivalence relation on $\CF^n$, and we denote by $[u]_M$ the equivalence class of each $u\in\CF^n$.

\begin{prop} \label{vec-equiv-supp}
For fuzzy vectors $u,v\in\CF^n$, the following statements are equivalent:
\begin{enumerate}[label={\rm(\roman*)}]
\item \label{vec-equiv-supp:equiv} $u\sim_M v$.
\item \label{vec-equiv-supp:level} For each $\al\in[0,1]$, either $u_{\al}$ is a regular inner parallel body of $v_{\al}$, or $v_{\al}$ is a regular inner parallel body of $u_{\al}$.
\item \label{vec-equiv-supp:supp} For each $\al\in[0,1]$, the function $h_u(\al,-)-h_v(\al,-)$ is constant on $S^{n-1}$.
\end{enumerate}
\end{prop}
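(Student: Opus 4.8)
The plan is to route all three implications through condition \ref{vec-equiv-supp:supp}, proving \ref{vec-equiv-supp:equiv}$\implies$\ref{vec-equiv-supp:supp}, then \ref{vec-equiv-supp:supp}$\iff$\ref{vec-equiv-supp:level}, and finally the delicate direction \ref{vec-equiv-supp:supp}$\implies$\ref{vec-equiv-supp:equiv}. For \ref{vec-equiv-supp:equiv}$\implies$\ref{vec-equiv-supp:supp} I would start from $u\oplus w=v\oplus w'$ with $w,w'\in\Fs$; Theorem \ref{vec-sum-supp} turns this into $h_u+h_w=h_v+h_{w'}$, and Theorem \ref{sym-vec-supp} says $h_w(\al,-)$ and $h_{w'}(\al,-)$ are constant on $S^{n-1}$ for each $\al$. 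Hence $h_u(\al,-)-h_v(\al,-)=h_{w'}(\al)-h_w(\al)$ is constant on $S^{n-1}$, which is exactly \ref{vec-equiv-supp:supp}.

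For \ref{vec-equiv-supp:supp}$\iff$\ref{vec-equiv-supp:level} I would fix $\al$ and write $c(\al)$ for the constant value of $h_u(\al,-)-h_v(\al,-)$. If $c(\al)\geq 0$ then $h_{u_{\al}}=h_{v_{\al}}+c(\al)$, so Lemma \ref{cb-pb-supp} gives $u_{\al}=v_{\al}+B_{c(\al)}$ and $v_{\al}=(u_{\al})_{-c(\al)}$, i.e.\ $v_{\al}$ is a regular inner parallel body of $u_{\al}$; if $c(\al)\leq 0$ the symmetric argument applied to $-c(\al)$ shows $u_{\al}$ is a regular inner parallel body of $v_{\al}$, giving \ref{vec-equiv-supp:level}. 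Conversely, each regularity assertion in \ref{vec-equiv-supp:level} means, again by Lemma \ref{cb-pb-supp}, that $h_{u_{\al}}-h_{v_{\al}}$ equals $\pm$ the relevant distance and is therefore constant on $S^{n-1}$, recovering \ref{vec-equiv-supp:supp}.

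The hard part will be \ref{vec-equiv-supp:supp}$\implies$\ref{vec-equiv-supp:equiv}, because I must actually manufacture two symmetric fuzzy vectors witnessing $u\sim_M v$. The obstruction is that the constant $c(\al)=h_u(\al,-)-h_v(\al,-)$, viewed as a function of $\al$, is a difference of two non-increasing functions, hence in general neither monotone nor nonnegative; so $c$ cannot itself be the support function of a symmetric fuzzy vector (Corollary \ref{function-sym-sup}) and cannot simply be absorbed into one side. My trick is to realise $c$ as a difference of two \emph{admissible} single-variable support functions built from $u$ and $v$ directly. Since $h_u$ and $h_v$ are bounded on $[0,1]\times S^{n-1}$, choose $M\geq 0$ with $|h_u|,|h_v|\leq M$, fix any $x_0\in S^{n-1}$, and set $f(\al):=h_u(\al,x_0)+M$ and $g(\al):=h_v(\al,x_0)+M$. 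By \ref{vec-supp:x} of Theorem \ref{vec-supp} both $f$ and $g$ are non-increasing, left-continuous on $(0,1]$ and right-continuous at $0$, and the shift by $M$ makes them nonnegative, so Corollary \ref{function-sym-sup} provides symmetric fuzzy vectors $w',w\in\Fs$ with support functions $f,g$ respectively. Then
$$h_{w'}(\al)-h_w(\al)=f(\al)-g(\al)=h_u(\al,x_0)-h_v(\al,x_0)=c(\al),$$
and because \ref{vec-equiv-supp:supp} makes $c(\al)$ independent of direction, this says precisely $h_u+h_w=h_v+h_{w'}$ as functions on $[0,1]\times S^{n-1}$. Applying Theorem \ref{vec-sum-supp} to each side yields $u\oplus w=v\oplus w'$, hence $u\sim_M v$, which closes the cycle.
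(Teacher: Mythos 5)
Your proposal is correct and follows essentially the same route as the paper: the easy implications via Theorem \ref{vec-sum-supp}, Theorem \ref{sym-vec-supp} and Lemma \ref{cb-pb-supp}, and for \ref{vec-equiv-supp:supp}$\implies$\ref{vec-equiv-supp:equiv} the same device of fixing $x_0\in S^{n-1}$, shifting $h_u(-,x_0)$ and $h_v(-,x_0)$ by a common bound to make them admissible under Corollary \ref{function-sym-sup}, and reading off the witnesses $w,w'$. The only cosmetic difference is that the paper shifts by an upper bound $\eta$ (nonnegativity then follows from sublinearity), whereas you shift by a bound on the absolute values, which is equally valid.
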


\begin{proof}
\ref{vec-equiv-supp:level}$\iff$\ref{vec-equiv-supp:supp} is an immediate consequence of Theorem \ref{vec-sum-supp} and Lemma \ref{cb-pb-supp}, and \ref{vec-equiv-supp:equiv}$\implies$\ref{vec-equiv-supp:supp} follows soon from Theorems \ref{vec-sum-supp} and \ref{sym-vec-supp}. For \ref{vec-equiv-supp:supp}$\implies$\ref{vec-equiv-supp:equiv}, let us fix $x_0\in S^{n-1}$, and let $\eta$ be a common upper bound of $h_u$ and $h_v$ on $[0,1]\times S^{n-1}$. Then the functions
$$\begin{array}{ccc}
[0,1]&\to&\bbR\\
\al&\mapsto&\eta+h_u(\al,x_0)
\end{array}$$
and
$$\begin{array}{ccc}
[0,1]&\to&\bbR\\
\al&\mapsto&\eta+h_v(\al,x_0)
\end{array}$$
clearly satisfy the conditions of Corollary \ref{function-sym-sup}, and thus they are support functions of symmetric fuzzy vectors $w,w'\in\Fs$, respectively.

Since $h_u(\al,-)-h_v(\al,-)$ is constant on $S^{n-1}$ for each $\al\in[0,1]$, it follows that
$$h_u(\al,x)-h_v(\al,x)=h_u(\al,x_0)-h_v(\al,x_0)=h_w(\al)-h_{w'}(\al)$$
for all $\al\in[0,1]$, $x\in S^{n-1}$; that is,
$$h_u+h_{w'}=h_v+h_w,$$
and therefore $u\oplus w'=v\oplus w$ by Theorem \ref{vec-sum-supp}, showing that $u\sim_M v$.
\end{proof}

\begin{rem}
For fuzzy vectors $u,v\in\CF^n$, $u\sim_M v$ does not necessarily imply that $u=v\oplus w$ or $v=u\oplus w$ for some $w\in\Fs$. For example, if
$$u(t)=\begin{cases}
1 & \text{if}\ t\in B_1,\\
0 & \text{if}\ t\not\in B_1
\end{cases}\quad\text{and}\quad v(t)=\begin{cases}
1-\dfrac{||t||}{2} & \text{if}\ t\in B_2,\\
0 & \text{if}\ t\not\in B_2,
\end{cases}$$
then $u,v\in\Fs$ and it trivially holds that $u\oplus v=v\oplus u$; hence $u\sim_M v$. However, there is no $w\in\Fs$ such that $u=v\oplus w$ or $v=u\oplus w$. Indeed, the level sets of $u,v$ are given by
$$u_{\al}\equiv B_1\quad\text{and}\quad v_{\al}=B_{2-2\al}$$
for all $\al\in[0,1]$; that is, $u_{\al}$ is a regular inner parallel body of $v_{\al}$ when $0\leq\al\leq\dfrac{1}{2}$, and $v_{\al}$ is a regular inner parallel body of $u_{\al}$ when $\dfrac{1}{2}\leq\al\leq 1$.
\end{rem}

It is obvious that $\cu\sim_M u$ for all $u\in\CF^n$. In fact, every Mare{\v s} core of $u$ is Mare{\v s} equivalent to $u$. In what follows we construct another skew fuzzy vector $\ku$ that is Mare{\v s} equivalent to $u$ but may not be a Mare{\v s} core of $u$:

\begin{prop} \label{ku-vec}
There is a skew fuzzy vector $\ku\in\CF^n$ whose level sets are given by
\renewcommand\arraystretch{1.6}
$$\ku_{\al}:=\left\{\begin{array}{ll}
\bigcap\limits_{v\in[u]_M}v_{\al} & \text{if}\ \al\in(0,1],\\
\overline{\bigcup\limits_{\be\in(0,1]}\ku_{\be}} & \text{if}\ \al=0,
\end{array}
\right.$$
\renewcommand\arraystretch{1.3}%
and whose support function $h_{\ku}:[0,1]\times S^{n-1}\to\bbR$ is given by
$$h_{\ku}(\al,x)=\left\{\begin{array}{ll}
\bw\limits_{v\in[u]_M}h_v(\al,x) & \text{if}\ \al\in(0,1],\\
\lim\limits_{\be\ra 0+}h_{\ku}(\be,x) & \text{if}\ \al=0.
\end{array}
\right.$$
In particular, $\ku\sim_M u$.
\end{prop}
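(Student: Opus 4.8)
The plan is to mirror the proof of Proposition \ref{cu-vec}, replacing $\Up_u$ by the larger class $[u]_M$ and accounting for the crucial new feature that a member $v\in[u]_M$ need no longer satisfy $v_\al\subseteq u_\al$. First I would rephrase membership through support functions: by Proposition \ref{vec-equiv-supp}, $v\in[u]_M$ precisely when for each $\al\in[0,1]$ there is a constant $c_v(\al)\in\bbR$, independent of $x$, with $h_v(\al,x)=h_u(\al,x)-c_v(\al)$ for all $x\in S^{n-1}$; in particular $c_u(\al)=0$. Putting $C(\al):=\bv_{v\in[u]_M}c_v(\al)$, the prescribed support function is then exactly $\bw_{v\in[u]_M}h_v(\al,-)=h_u(\al,-)-C(\al)$.

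The heart of the existence step is to show that $h_u(\al,-)-C(\al)$ is again a support function, equivalently that $C(\al)\in\Lam_{u_\al}=[0,\lam_{u_\al}]$ (Proposition \ref{LamA-closed-interval}). Nonnegativity comes from $c_u(\al)=0$. For the upper bound, whenever $c_v(\al)\geq 0$ the identity $h_{u_\al}=h_{v_\al}+c_v(\al)$ together with Lemma \ref{cb-pb-supp} exhibits $v_\al=(u_\al)_{-c_v(\al)}$ as a regular inner parallel body of $u_\al$, so that $c_v(\al)\in\Lam_{u_\al}$ and hence $c_v(\al)\leq\lam_{u_\al}$; the members with $c_v(\al)<0$ satisfy $v_\al\supseteq u_\al$ and therefore affect neither the supremum nor the intersection. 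Thus $C(\al)\in\Lam_{u_\al}$, and Corollary \ref{Alam-reg} identifies $h_u(\al,-)-C(\al)$ as the support function of the regular inner parallel body $(u_\al)_{-C(\al)}$; since passing to support functions turns an intersection of convex bodies into the pointwise infimum of their support functions (Proposition \ref{cb-elements}), this convex body is precisely $\bigcap_{v\in[u]_M}v_\al$. Taking this common value as $\ku_\al$ for $\al\in(0,1]$ and the closure of the union at $\al=0$, I would then verify conditions \ref{vec-cb:convex}--\ref{vec-cb:0} of Theorem \ref{vec-cb} just as in Proposition \ref{cu-vec}: monotonicity is inherited from each $v$, condition \ref{vec-cb:cont} follows by interchanging the two intersections, and \ref{vec-cb:0} holds by construction. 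This makes $\ku\in\CF^n$ a well-defined fuzzy vector with the stated level sets and support function, the value of $h_{\ku}(0,-)$ being forced by right-continuity through Theorem \ref{vec-supp}.

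It then remains to record the two assertions about $\ku$. The relation $\ku\sim_M u$ is immediate, since $h_u(\al,-)-h_{\ku}(\al,-)=C(\al)$ is constant on $S^{n-1}$ for every $\al$, so Proposition \ref{vec-equiv-supp} applies. For skewness (Definition \ref{skew-vec-def}), suppose $\ku=v\oplus w$ with $w\in\Fs$. Then $v\sim_M\ku\sim_M u$, so $v\in[u]_M$ and hence $\ku_\al=\bigcap_{v'\in[u]_M}v'_\al\subseteq v_\al$; conversely $\ku_\al=v_\al+w_\al$ (Theorem \ref{vec-sum-supp}) with $o\in w_\al$ gives $v_\al\subseteq\ku_\al$. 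Therefore $v_\al=\ku_\al$ for all $\al$, that is $v=\ku$ by \eqref{vec-level}, and then $h_{\ku}=h_{\ku}+h_w$ by Theorem \ref{vec-sum-supp} forces $h_w\equiv 0$, whence $w=\tilde o$ by Proposition \ref{cb-ball}. Thus $\ku$ is skew.

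I expect the main obstacle to be the existence step, namely showing that the pointwise infimum $\bw_{v\in[u]_M}h_v(\al,-)$ is still a genuine support function. An infimum of sublinear functions need not be sublinear, and here it succeeds only because these functions differ from $h_u(\al,-)$ by constants independent of $x$; the entire difficulty therefore collapses to the bound $C(\al)\leq\lam_{u_\al}$, i.e. to the fact that the maximal admissible shrinkage of each level set stays within the closed interval $\Lam_{u_\al}$ provided by Proposition \ref{LamA-closed-interval}.
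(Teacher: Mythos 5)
Your proof is correct and follows essentially the same route as the paper, which itself only sketches the existence step (``similar to Proposition \ref{cu-vec} under the help of Proposition \ref{vec-equiv-supp}'') and gives the same skewness argument via $h_v\leq h_{\ku}\leq h_v$. Your write-up usefully fills in the detail the paper leaves to the reader --- namely that members $v\in[u]_M$ with $v_\al\supseteq u_\al$ affect neither the supremum $C(\al)$ nor the intersection, so that $\ku_\al=(u_\al)_{-C(\al)}$ with $C(\al)\in\Lam_{u_\al}$ by Proposition \ref{LamA-closed-interval} and Corollary \ref{Alam-reg}.
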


\begin{proof}
The verification of $\ku$ being a fuzzy vector is similar to Proposition \ref{cu-vec} under the help of Proposition \ref{vec-equiv-supp}, and thus we leave it to the readers. In particular, $\ku\sim_M u$ is an immediate consequence of Proposition \ref{vec-equiv-supp} and the fact that each $\ku_{\al}$ is a regular inner parallel body of $u_{\al}$.

To see that $\ku$ is skew, suppose that $\ku=v\oplus w$ and $w$ is symmetric. Then $h_v\leq h_{\ku}$ by Lemma \ref{u=v-oplus-w}. Conversely, since $\ku\sim_M v$, it holds that $u\sim_M v$, and thus $h_{\ku}\leq h_v$. Hence $h_{\ku}=h_v$, which forces $w=\tilde{o}$ and completes the proof.
\end{proof}

\begin{rem}
For each $\al\in(0,1]$, the $\al$-level set
$$u_{\al}=\{t\in\bbR^n\mid u(t)\geq\al\}$$
of a fuzzy vector $u\in\CF^n$ has a smallest regular inner parallel body given by Equation \eqref{smallest-reg-pb} below Corollary \ref{Alam-reg}. It is then tempting to ask whether $\ku$ could be determined by the $\al$-level sets
$$\ku_{\al}=(u_{\al})_{-\lam_{u_{\al}}}.$$
Unfortunately, this is not true since, in general, $\{(u_{\al})_{-\lam_{u_{\al}}}\mid\al\in[0,1]\}$ does not satisfy the conditions of Theorem \ref{vec-cb} even when $n=1$. For example, the level sets of the fuzzy number
$$u:\bbR\to[0,1],\quad u(t)=\begin{cases}
1-\dfrac{t}{2} & \text{if}\ t\in[0,2],\\
0 & \text{else}
\end{cases}$$
are given by
$$u_{\al}=[0,2-2\al]$$
for all $\al\in[0,1]$, but
$$\{(u_{\al})_{-\lam_{u_{\al}}}\mid\al\in[0,1]\}=\{\{1-\al\}\mid\al\in[0,1]\}$$
consists of non-identical singleton sets which obviously violate the condition \ref{vec-cb:monotone}.
\end{rem}

From the definition it is clear that $\ku$ is the \emph{smallest} fuzzy vector in the Mare{\v s} equivalence class of $u\in\CF^n$; that is,
$$h_{\ku}\leq h_v$$
for all $v\in[u]_M$. In fact, if the dimension $n=1$, $\ku$ is precisely the (unique) Mare{\v s} core of a fuzzy number $u$ constructed in \cite[Proposition 4.2]{Chai2016} (cf. \cite[Remark 4.4]{Chai2016}); that is:

\begin{cor} \label{fuzzy-number-cu=ku}
For each fuzzy number $u$, it holds that
$$\ku=\cu.$$
Moreover, $\ku$ is the unique Mare{\v s} core of $u$ and the unique skew fuzzy number in the Mare{\v s} equivalence class $[u]_M$.
\end{cor}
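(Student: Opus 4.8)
The plan is to reduce everything to a single assertion: \emph{for $n=1$, the smallest member $\ku$ of the Mare\v s class $[u]_M$ (Proposition \ref{ku-vec}) is in fact a Mare\v s core of $u$.} Granting this, the rest is formal. Indeed, $\cu$ is a Mare\v s core by Theorem \ref{cu-core}, while Theorem \ref{fuzzy-number-main} says a fuzzy number has exactly one Mare\v s core; hence $\ku=\cu$ and this common fuzzy number is the unique Mare\v s core of $u$. Moreover, any skew $v\in[u]_M$ satisfies $[v]_M=[u]_M$, so its smallest member is again $\ku$; applying the assertion to $v$ gives $v=\ku\oplus w'$ with $w'\in\Fs$, and the skewness of $v$ forces $w'=\tilde{o}$, i.e. $v=\ku$. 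Thus $\ku$ is also the unique skew fuzzy number in $[u]_M$.

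To prove the assertion I would work on $S^0=\{-1,1\}$ and encode each level set as an interval through its \emph{midpoint} $m_\al:=\tfrac12\big(h_u(\al,1)-h_u(\al,-1)\big)$ and \emph{radius} $\rho_\al:=\tfrac12\big(h_u(\al,1)+h_u(\al,-1)\big)\ge0$. By Proposition \ref{vec-equiv-supp}, two fuzzy numbers are Mare\v s equivalent exactly when they share the same midpoint function, so every member of $[u]_M$ has midpoint $m$ and $\ku$ is the member of minimal radius $\hat\rho_\al:=\bw_{v\in[u]_M}\rho^v_\al$. Set $c(\al):=h_u(\al,\cdot)-h_{\ku}(\al,\cdot)=\rho_\al-\hat\rho_\al$. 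Since each $\ku_\al$ is a \emph{regular} inner parallel body of $u_\al$ (Proposition \ref{ku-vec}), Lemma \ref{cb-pb-supp} identifies $c(\al)$ with the nonnegative radius of the corresponding ball, and $c$ inherits left-continuity on $(0,1]$ and right-continuity at $0$ from $h_u,h_{\ku}$ (Theorem \ref{vec-supp}). By Corollary \ref{function-sym-sup} it then suffices to show that $c$ is \emph{non-increasing}: this produces a symmetric $w\in\Fs$ with $h_w=c$, whence $u=\ku\oplus w$ by Theorem \ref{vec-sum-supp}, exhibiting $\ku$ as a Mare\v s core.

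The monotonicity of $c$ is the crux, and the key is to identify the minimal radius $\hat\rho_\al$ with the total variation $V_\al$ of the midpoint function $m$ over $[\al,1]$. The bound $\hat\rho_\al\ge V_\al$ is immediate: the inclusions $v_\be\subseteq v_\al$ force $\rho^v_\al-\rho^v_\be\ge|m_\be-m_\al|$ for every fuzzy number $v$ of midpoint $m$, and summing this over a partition of $[\al,\be]$ (all terms being nonnegative) gives $\rho^v_\al-\rho^v_\be\ge$ the variation of $m$ on $[\al,\be]$; taking $v=\ku$ and $\be=1$ yields $\hat\rho_\al\ge V_\al$. For the reverse bound I would exhibit a genuine fuzzy number of midpoint $m$ and radius exactly $V_\al$: its endpoints $m_\al\pm V_\al$ are respectively non-decreasing and non-increasing, because $V_\al-V_\be$ equals the variation of $m$ on $[\al,\be]$, which dominates $|m_\be-m_\al|$. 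This competitor lies in $[u]_M$, so $\hat\rho_\al\le V_\al$, giving $\hat\rho_\al=V_\al$. Finally, applying the same telescoped inequality to $u$ itself yields $\rho_\al-\rho_\be\ge V_\al-V_\be=\hat\rho_\al-\hat\rho_\be$ for $\al\le\be$, i.e. $c(\al)\ge c(\be)$, as required.

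I expect the hardest part to be the identification $\hat\rho_\al=V_\al$, and specifically the analytic verification that the competitor of radius $V_\al$ meets the continuity requirement \ref{vec-supp:x} (that is, that $\al\mapsto V_\al$ is left-continuous on $(0,1]$ and right-continuous at $0$), since the midpoint function is known only to possess these one-sided continuities. It is worth stressing that this is precisely where the hypothesis $n=1$ is used: for $n\ge2$ the ``midpoint plus scalar radius'' bookkeeping collapses, $c$ need no longer be non-increasing, and $\ku$ may fail to be a Mare\v s core, in agreement with Example \ref{vec-deomp-not-unique} and Theorem \ref{main}.
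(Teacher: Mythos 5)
Your argument is correct, but it takes a different route from the paper, which in fact offers no proof of this corollary at all: the statement is presented as an immediate consequence of Theorem \ref{fuzzy-number-main} together with the remark that, for $n=1$, the construction of $\ku$ coincides with the Mare{\v s} core built in Chai--Zhang's Proposition 4.2 (cf. their Remark 4.4), so the substantive content is outsourced to a citation. What you do instead is reconstruct that external result inside the support-function framework of this paper. Your reduction in the first paragraph is exactly what the corollary needs (uniqueness from Theorem \ref{fuzzy-number-main} gives $\ku=\cu$ once $\ku$ is known to be a Mare{\v s} core, and any skew $v\in[u]_M$ has $\sk(v)=\ku$, whence $v=\ku\oplus w'$ with $w'=\tilde{o}$). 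Your proof of the key assertion is sound at every step: the inclusion $v_\be\subseteq v_\al$ does yield $\rho^v_\al-\rho^v_\be\ge|m_\al-m_\be|$ and, after telescoping over partitions, the bound $\widehat{\rho}_\al\ge V_\al$; the competitor with level sets $[m_\al-V_\al,\,m_\al+V_\al]$ is nested and satisfies the conditions \ref{vec-cb:convex}--\ref{vec-cb:0} of Theorem \ref{vec-cb} provided $\al\mapsto V_\al$ inherits the one-sided continuity of $m$; and the inequality $\rho_\al-\rho_\be\ge V_\al-V_\be$ gives the monotonicity of $c$ needed to invoke Corollary \ref{function-sym-sup} and Theorem \ref{vec-sum-supp}. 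The one step you flag rather than prove---left-continuity of $V_\al$ on $(0,1]$ and right-continuity at $0$---is indeed the only remaining analytic work, and it is standard: the jump of the variation function of a BV function equals the absolute jump of the function itself, so $V$ inherits each one-sided continuity from $m$. In effect you have re-derived, in the midpoint/radius coordinates furnished by Proposition \ref{vec-equiv-supp}, the total-variation mechanism of Qiu et al.\ and Chai--Zhang; this buys a self-contained proof (and a transparent explanation of why the argument collapses for $n\ge 2$, consistent with Example \ref{vec-deomp-not-unique}), at the cost of redoing analysis that the paper simply cites.
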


However, if the dimension $n\geq 2$, the following continuation of Example \ref{vec-deomp-not-unique} shows that $\ku$ may not be a Mare{\v s} core of $u$, and skew fuzzy vectors may be Mare{\v s} equivalent to each other:

\begin{exmp} \label{ku-not-Mares-core}
For the fuzzy vectors $u$, $v_{\lam}$, $w_{\lam}$ $(0\leq\lam\leq 1)$ considered in Example \ref{vec-deomp-not-unique}, since
$$v_{\lam}\in[u]_M$$
for all $\lam\in[0,1]$, all skew fuzzy vectors $v_{\lam}$ are Mare{\v s} equivalent to each other. Moreover, although
$$v_0=\ku=\sk(v_{\lam}),$$
$v_0$ is not a Mare{\v s} core of $v_{\lam}$ whenever $\lam\in(0,1]$; in fact, since $v_{\lam}$ is skew, its Mare{\v s} core must be itself.
\end{exmp}

%
%

\section{Concluding remarks}


The well-known Theorem \ref{vec-cb} builds up a bridge from convex geometry to fuzzy vectors. From the viewpoint of a convex geometer, the theory of fuzzy vectors is a theory about \emph{sequences of convex bodies}. Our main result, the representation of fuzzy vectors via support functions (Theorem \ref{vec-supp}), can never be achieved without the aid of convex geometry. We believe that the power of convex geometry in the study of fuzzy vectors is still to be unveiled, which is worth further exploration. 

We end this paper with two questions about Mare{\v s} cores of fuzzy vectors that remain unsolved:
\begin{enumerate}[label=(\arabic*)]
\item Is it possible to find all the Mare{\v s} cores of a given fuzzy vector?
\item Is it possible to find a necessary and sufficient condition to characterize those fuzzy vectors with exactly one Mare{\v s} core?
\end{enumerate}

\section*{Acknowledgment}

The authors acknowledge the support of National Natural Science Foundation of China (No. 11501393 and No. 11701396), and the first named author also acknowledges the support of Sichuan Science and Technology Program (No. 2019YJ0509) and a joint research project of Laurent Mathematics Research Center of Sichuan Normal University and V.\thinspace C. \& V.\thinspace R. Key Lab of Sichuan Province.

The authors are grateful for helpful comments received from Professor Hongliang Lai and Professor Baocheng Zhu.

\bibliographystyle{abbrv}
\bibliography{lili}

\begin{thebibliography}{10}

\bibitem{Bonnesen1934}
T.~Bonnesen and W.~Fenchel.
\newblock {\em Theorie der Konvexen K{\"o}rper}.
\newblock Springer, Berlin--Heidelberg, 1934.

\bibitem{Chai2016}
Y.~Chai and D.~Zhang.
\newblock A representation of fuzzy numbers.
\newblock {\em Fuzzy Sets and Systems}, 295:1--18, 2016.

\bibitem{Diamond1989}
P.~Diamond and P.~Kloeden.
\newblock Characterization of compact subsets of fuzzy sets.
\newblock {\em Fuzzy Sets and Systems}, 29(3):341--348, 1989.

\bibitem{Diamond1994}
P.~Diamond and P.~Kloeden.
\newblock {\em Metric Spaces of Fuzzy Sets: Theory and Applications}.
\newblock World Scientific, Singapore, 1994.

\bibitem{Dijkman1983}
J.~G. Dijkman, H.~van Haeringen, and S.~J. de~Lange.
\newblock Fuzzy numbers.
\newblock {\em Journal of Mathematical Analysis and Applications},
  92(2):301--341, 1983.

\bibitem{Dubois1978}
D.~Dubois and H.~Prade.
\newblock Operations on fuzzy numbers.
\newblock {\em International Journal of Systems Science}, 9(6):613--626, 1978.

\bibitem{Dubois1980}
D.~Dubois and H.~Prade.
\newblock {\em Fuzzy Sets and Systems: Theory and Applications}, volume 144 of
  {\em Mathematics in Science and Engineering}.
\newblock Academic Press, 1980.

\bibitem{Dubois1982}
D.~Dubois and H.~Prade.
\newblock Towards fuzzy differential calculus part 1: Integration of fuzzy
  mappings.
\newblock {\em Fuzzy Sets and Systems}, 8(1):1--17, 1982.

\bibitem{Dubois1982a}
D.~Dubois and H.~Prade.
\newblock Towards fuzzy differential calculus part 2: Integration on fuzzy
  intervals.
\newblock {\em Fuzzy Sets and Systems}, 8(2):105--116, 1982.

\bibitem{Dubois1982b}
D.~Dubois and H.~Prade.
\newblock Towards fuzzy differential calculus part 3: Differentiation.
\newblock {\em Fuzzy Sets and Systems}, 8(3):225--233, 1982.

\bibitem{Dubois1993}
D.~Dubois and H.~Prade.
\newblock Fuzzy numbers: an overview.
\newblock In D.~Dubois, H.~Prade, and R.~R. Yager, editors, {\em Readings in
  Fuzzy Sets for Intelligent Systems}, pages 112--148. Morgan Kaufmann, 1993.

\bibitem{Goetschel1983}
R.~Goetschel~Jr. and W.~Voxman.
\newblock Topological properties of fuzzy numbers.
\newblock {\em Fuzzy Sets and Systems}, 10(1):87--99, 1983.

\bibitem{Gruber2007}
P.~M. Gruber.
\newblock {\em Convex and Discrete Geometry}, volume 336 of {\em A Series of
  Comprehensive Studies in Mathematics}.
\newblock Springer, Berlin--Heidelberg, 2007.

\bibitem{Gruber1993}
P.~M. Gruber and J.~M. Wills, editors.
\newblock {\em Handbook of Convex Geometry}.
\newblock North Holland, Amsterdam, 1993.

\bibitem{Hong2003}
D.~H. Hong.
\newblock An additive decomposition of fuzzy numbers.
\newblock {\em Kybernetika}, 39(3):289--294, 2003.

\bibitem{Kaleva1985}
O.~Kaleva.
\newblock On the convergence of fuzzy sets.
\newblock {\em Fuzzy Sets and Systems}, 17(1):53--65, 1985.

\bibitem{Kaleva1987}
O.~Kaleva.
\newblock Fuzzy differential equations.
\newblock {\em Fuzzy Sets and Systems}, 24(3):301--317, 1987.

\bibitem{Koldobsky2008}
A.~Koldobsky and V.~Yaskin.
\newblock {\em The Interface Between Convex Geometry and Harmonic Analysis},
  volume 108 of {\em CBMS Regional Conference Series in Mathematics}.
\newblock American Mathematical Society, Providence, 2008.

\bibitem{Maeda2008}
T.~Maeda.
\newblock On characterization of fuzzy vectors and its applications to fuzzy
  mathematical programming problems.
\newblock {\em Fuzzy Sets and Systems}, 159(24):3333--3346, 2008.

\bibitem{Marevs1989}
M.~Mare{\v s}.
\newblock Addition of fuzzy quantities: disjunction-conjunction approach.
\newblock {\em Kybernetika}, 25(2):104--116, 1989.

\bibitem{Marevs1992}
M.~Mare{\v s}.
\newblock Additive decomposition of fuzzy quantities with finite supports.
\newblock {\em Fuzzy Sets and Systems}, 47(3):341--346, 1992.

\bibitem{Negoita1975}
C.~V. Negoi{\c{T}}{\u{a}} and D.~A. Ralescu.
\newblock {\em Applications of Fuzzy Sets to Systems Analysis}, volume~11 of
  {\em Interdisciplinary Systems Research / Interdisziplin{\"a}re
  Systemforschung}.
\newblock Birkh{\"a}user, Basel, 1975.

\bibitem{Nguyen1978}
H.~T. Nguyen.
\newblock A note on the extension principle for fuzzy sets.
\newblock {\em Journal of Mathematical Analysis and Applications},
  64(2):369--380, 1978.

\bibitem{Puri1985}
M.~L. Puri and D.~A. Ralescu.
\newblock The concept of normality for fuzzy random variables.
\newblock {\em The Annals of Probability}, 13(4):1373--1379, 1985.

\bibitem{Qiu2014}
D.~Qiu, C.~Lu, W.~Zhang, and Y.~Lan.
\newblock Algebraic properties and topological properties of the quotient space
  of fuzzy numbers based on mareš equivalence relation.
\newblock {\em Fuzzy Sets and Systems}, 245:63--82, 2014.

\bibitem{Schneider2013}
R.~Schneider.
\newblock {\em Convex bodies: the {B}runn-{M}inkowski theory}, volume 151 of
  {\em Encyclopedia of Mathematics and its Applications}.
\newblock Cambridge University Press, Cambridge, second edition, 2013.

\bibitem{Wang2007}
G.~Wang, Y.~Li, and C.~Wen.
\newblock On fuzzy n-cell numbers and n-dimension fuzzy vectors.
\newblock {\em Fuzzy Sets and Systems}, 158(1):71--84, 2007.

\bibitem{Wang2002}
G.~Wang and C.~Wu.
\newblock Fuzzy n-cell numbers and the differential of fuzzy n-cell number
  value mappings.
\newblock {\em Fuzzy Sets and Systems}, 130(3):367--381, 2002.

\bibitem{Zadeh1965}
L.~A. Zadeh.
\newblock Fuzzy sets.
\newblock {\em Information and Control}, 8(3):338--353, 1965.

\bibitem{Zadeh1975}
L.~A. Zadeh.
\newblock The concept of a linguistic variable and its application to
  approximate reasoning--{I}.
\newblock {\em Information Sciences}, 8(3):199--249, 1975.

\bibitem{Zhang2001}
B.~Zhang.
\newblock On measurability of fuzzy-number-valued functions.
\newblock {\em Fuzzy Sets and Systems}, 120(3):505--509, 2001.

\bibitem{Zhang2002b}
B.~Zhang and C.~Wu.
\newblock On the representation of n-dimensional fuzzy numbers and their
  informational content.
\newblock {\em Fuzzy Sets and Systems}, 128(2):227--235, 2002.

\end{thebibliography}

\end{document}